\definecolor{gray}{gray}{.75}
\definecolor{gray2}{gray}{.50}
\newcommand{\one}{\mathbf{1}}
\newcommand{\dom}{\mathcal{D}}
\newcommand{\domr}{\mathcal{D}_{\min}}
\newcommand{\doma}{\mathcal{D}_{\max}}
\newcommand{\domm}{\widetilde{\mathcal{D}}}
\newcommand{\B}{\mathcal{B}}
\newcommand{\Z}{\mathbb{Z}}
\newcommand{\R}{\mathbb{R}}
\newcommand{\C}{\mathbb{C}}
\newcommand{\D}{\nabla}
\newcommand{\Dr}{\nabla_{\min}}
\newcommand{\Da}{\nabla_{\max}}
\newcommand{\DD}{\widetilde{\nabla}}
\newcommand{\G}{\Gamma}
\newcommand{\GG}{\widetilde{\Gamma}}
\newcommand{\A}{\mathrm{\alpha}}
\newcommand{\w}{\mathrm{\omega}}
\newtheorem{thm}{Theorem}[section]
\newtheorem{cor}[thm]{Corollary}
\newtheorem{remark}[thm]{Remark}
\newtheorem{prop}[thm]{Proposition}
\newtheorem{lemma}[thm]{Lemma}
\newtheorem{defn}[thm]{Definition}
\numberwithin{equation}{section}
\begin{document}

\title[Refined Analytic Torsion]
{Refined Analytic Torsion on Manifolds with Boundary}
\author{Boris Vertman}
\address{University of Bonn \\ Department of Mathematics \\
Beringstr. 6\\ 53115 Bonn\\ Germany}
\email{vertman@math.uni-bonn.de}

\thanks{2000 Mathematics Subject Classification. 58J52.}

\begin{abstract}
{We discuss the refined analytic torsion, introduced by M. Braverman and T. Kappeler as a canonical refinement of analytic torsion on closed manifolds. Unfortunately there seems to be no canonical way to extend their construction to compact manifolds with boundary. We propose a different refinement of analytic torsion, similar to Braverman and Kappeler, which does apply to compact manifolds with and without boundary. In a subsequent publication we prove a surgery formula for our construction.}
\end{abstract}

\maketitle

\pagestyle{myheadings}
\markboth{\textsc{Refined Analytic Torsion}}{\textsc{Boris Vertman}} 

\section{Introduction}\
\\[-3mm] The refined analytic torsion, defined by M. Braverman and T. Kappeler in [BK1] and [BK2] on closed manifolds, can be viewed as a refinement of the Ray-Singer torsion, since it is a canonical choice of an element with Ray-Singer norm one, in case of unitary representations. 
\\[3mm] The complex phase of the refinement is given by the rho-invariant of the odd-signature operator. Hence one can expect the refined analytic torsion to give more geometric information than the Ray-Singer torsion.
\\[3mm] This is indeed the case in the setup of lens spaces with explicit formulas for the associated Ray-Singer torsion and eta-invariants, see [RH, Section 5] and the references therein. There it is easy to find explicit examples of lens spaces which are not distinguished by the Ray-Singer torsion, however have different rho-invariants of the associated odd-signature operators.
\\[3mm] An important property of the Ray-Singer torsion norm is its gluing property, as established by W. L\"{u}ck in [L\"{u}] and S. Vishik in [V]. It is natural to expect a refinement of the Ray-Singer torsion to admit an analogous gluing property. 
\\[3mm] Unfortunately there seems to be no canonical way to extend the construction of Braverman and Kappeler to compact manifolds with boundary. In particular a gluing formula seems to be out of reach.
\\[3mm] We propose a different refinement of analytic torsion, similar to Braverman and Kappeler, which does apply to compact manifolds with and without boundary. In the subsequent publication [BV4] we establish a gluing formula for our construction, which in fact can also be viewed as a gluing law for the original definition of refined analytic torsion by Braverman and Kappeler.
\\[3mm] The presented construction is analogous to the definition in [BK1] and [BK2], but applies to any smooth compact Riemannian manifold, with or without boundary. For closed manifolds the construction differs from the original definition in [BK2]. Nevertheless we still refer to our concept as "refined analytic torsion" within the present discussion. 
\\[3mm] {\bf Acknowledgements.} The results of this article were obtained during the author's Ph.D. studies at Bonn University, Germany. The author would like to thank his thesis advisor Prof. Matthias Lesch for his support and useful discussions. The author was supported by the German Research Foundation as a scholar of the Graduiertenkolleg 1269 "Global Structures in Geometry and Analysis".

\section{Motivation for the generalized construction}\
\\[-3mm] The essential ingredient in the definition of the refined analytic torsion in [BK2] is the twisted de Rham complex with a chirality operator and the elliptic odd-signature operator associated to the complex, viewed as a map between the even forms. Hence in the case of a manifold with boundary we are left with the task of finding elliptic boundary conditions for the odd-signature operator which preserve the complex structure and provide a Fredholm complex, in the sense of [BL1].
\\[3mm] The notions of a Hilbert and a Fredholm complex were studied systematically in [BL1] and will be provided for convenience in the forthcoming section. The boundary conditions, that give rise to a Hilbert complex are referred to as "ideal boundary conditions". It is important to note that the most common self-adjoint extensions of the odd-signature operator between the even forms do not come from ideal boundary conditions. 
\\[3mm] The existence and explicit determination of elliptic boundary conditions for the odd-signature operator between the even forms, arising from ideal boundary conditions, is an open question. However, it is clear that the absolute and relative boundary conditions do not satisfy these requirements.  
\\[3mm] On the other hand the gluing formula in [V] and [L\"{u}] for the Ray-Singer torsion makes essential use of the relative and absolute boundary conditions. Since the establishment of a corresponding gluing formula for the refined analytic torsion is a motivation for our discussion, these boundary conditions seem to be natural choices.
\\[3mm] We are left with a dilemma, since neither the relative nor the absolute boundary conditions are invariant under the Hodge operator. We resolve this dilemma by combining the relative and absolute boundary conditions. This allows us to apply the concepts of [BK2] in a new setting and to establish the desired gluing formula. 

\section{Definition of Refined analytic torsion}\label{explicit-unitary}\
\\[-3mm] Let $(M^m, g^M)$ be a smooth compact connected odd-dimensional oriented Riemannian manifold with boundary $\partial M$, which may be empty. Let $(E, \D, h^E)$ be a flat complex vector bundle with any fixed Hermitian metric $h^E$, which need not to be flat with respect to $\D$.  
\\[3mm] The flat covariant derivative $\D$ is a first order differential operator $$\D: \Gamma (E) \rightarrow \Gamma (T^*M\otimes E),$$ satisfying the Leibniz rule $$\D_X(fs)=(Xf)s+f\D_Xs, \quad s \in \Gamma (E), X \in \Gamma (TM), f \in C^{\infty}(M).$$ The covariant derivative $\D$ extends by the Leibniz rule to the twisted exterior differential $\D: \Omega^k_0(M, E)\to \Omega^{k+1}_0(M, E)$ on $E-$valued differential forms with compact support in the interior of the manifold $\Omega^k_0(M,E)$. The exterior differential satisfies the (generalized) Leibniz rule 
\begin{align*}
\D_X(w\wedge \eta)=(\D_X w)\wedge \eta +(-1)^pw\wedge \D_X\eta,
\end{align*}
for any $w \in \Omega^p_0(M), \eta \in \Omega^q_0(M,E), X\in \Gamma (TM)$. 
\\[3mm] Due to flatness of $(E,\D)$ the twisted exterior differential gives rise to the twisted de Rham complex $(\Omega^*_0(M,E), \D)$. The metrics $g^M, h^E$ induce an $L^2-$inner product on $\Omega^*_0(M,E)$. We denote the $L^2-$completion of $\Omega^*_0(M,E)$ by $L^2_*(M,E)$. 
\\[3mm] Next we introduce the notion of the dual covariant derivative $\D'$. It is defined by requiring:
\begin{align}\label{dual-connection}
dh^E(u,v)[X]=h^E(\D_Xu,v)+h^E(u,\D'_Xv),
\end{align}
to hold for all $u,v \in C^{\infty}(M,E)$ and $X \in \Gamma(TM)$.
In the special case that the Hermitian metric $h^E$ is flat with respect to $\D$, the dual $\D'$ and the original covariant derivative $\D$ coincide. More precisely the Hermitian metric $h^E$ can be viewed as a section of $E^*\otimes E^*$. The covariant derivative $\D$ on $E$ gives rise to a covariant derivative on the tensor bundle $E^*\otimes E^*$, also denoted by $\D$ by a minor abuse of notation. 
\\[3mm] For $u,v,X$ as above one has:
$$\D h^E(u,v)[X]=dh^E(u,v)[X]-h^E(\D_{X}u,v)-h^E(u,\D_{X}v).$$
In view of \eqref{dual-connection} we find $$\D h^E=0 \ \Leftrightarrow \D=\D'.$$
As before, the dual $\D'$ gives rise to a twisted de Rham complex. Consider the differential operators $\D, \D'$ and their formal adjoint differential operators $\D^t, \D'^t$. The associated minimal closed extensions $\Dr, \Dr'$ and $\Dr^t, \Dr'^t$ are defined as the graph-closures in $L^2_*(M,E)$ of the respective differential operators. The maximal closed extensions are defined by
 $$\Da:=(\D^t_{\min})^*, \quad \Da':=(\D'^t_{\min})^*.$$
These extensions define Hilbert complexes in the following sense, as introduced in [BL1].
\begin{defn} \textup{[BL1]}
Let the Hilbert spaces $H_i,i=0,..,m,H_{m+1}=\{0\}$ be mutually orthogonal. For each $i=0,..,m$ let $D_i\in C(H_i,H_{i+1})$ be a closed operator with domain $\dom (D_i)$ dense in $H_i$ and range in $H_{i+1}$. Put $\dom_i:=\dom(D_i)$ and $R_i:=D_i(\dom_i)$ and assume $$R_i\subseteq \dom_{i+1}, \quad D_{i+1}\circ D_i=0.$$
This defines a complex $(\dom, D)$ $$0 \rightarrow \dom_0 \xrightarrow{D_0}\dom_1\xrightarrow{D_1}\cdots \xrightarrow{D_{m-1}}\dom_m\rightarrow 0.$$
Such a complex is called a Hilbert complex. If the homology of the complex is finite, i.e. if $R_i$ is closed and $\ker D_i / \textup{im} D_{i-1}$ is finite-dimensional for all $i=0,...,m$, the complex is referred to as a Fredholm complex.
\end{defn}\ \\
\\[-7mm] Indeed, by [BL1, Lemma 3.1] the extensions define Hilbert complexes as follows
\begin{align*}&(\domr, \Dr), \ \textup{where} \ \domr:=\dom (\Dr), \\  &(\doma, \Da), \ \textup{where} \ \doma:=\dom (\Da) \\[3mm]
&\hspace{20mm} (\domr', \Dr'), \ \textup{where} \ \domr':=\dom (\Dr'), \\  &\hspace{20mm} (\doma', \Da'), \ \textup{where} \ \doma':=\dom (\Da') . 
\end{align*} 
Note the following well-known central result on these complexes.
\begin{thm}\label{thm41} The Hilbert complexes $(\domr, \Dr)$ and $(\doma, \Da)$ are Fredholm with the associated Laplacians $\triangle_{\textup{rel}}$ and $\triangle_{\textup{abs}}$ being strongly elliptic in the sense of [Gi]. The de Rham isomorphism identifies the homology of the complexes with the relative and absolute cohomology with coefficients:
\begin{align*}
H^*(\domr, \Dr)&\cong H^*(M,\partial M, E), \\  H^*(\doma, \Da)&\cong H^*(M,E).
\end{align*}
Furthermore the cohomology of the Fredholm complexes $(\domr, \Dr)$ and $(\doma, \Da)$ can be computed from the following smooth subcomplexes, 
\begin{align*}
(\Omega^*_{\min}(M,E), \D), \quad &\Omega_{\min}^*(M,E):=\{\w \in \Omega^*(M,E)|\iota^*(\w)=0\}, \\
(\Omega^*_{\max}(M,E), \D), \quad &\Omega_{\max}^*(M,E):=\Omega^*(M,E),
\end{align*}
respectively, where we denote by $\iota: \partial M \hookrightarrow M$ the natural inclusion of the boundary.
\end{thm}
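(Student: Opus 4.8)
The plan is to reduce all three assertions to the general theory of Hilbert complexes developed in [BL1], combined with the classical ellipticity of the relative and absolute boundary conditions as in [Gi]. From [BL1] one has that every Hilbert complex $(\dom,D)$ carries a self-adjoint Laplacian $\triangle_i:=D_i^*D_i+D_{i-1}D_{i-1}^*$ on its natural domain, that the complex is Fredholm precisely when $\triangle$ has a spectral gap at $0$ (equivalently $0\notin\sigma_{\textup{ess}}(\triangle)$), and that in the Fredholm case $H^i(\dom,D)\cong\ker\triangle_i$. Thus it suffices to identify the Laplacians $\triangle_{\textup{rel}}$ and $\triangle_{\textup{abs}}$ of $(\domr,\Dr)$ and $(\doma,\Da)$ with the Hodge Laplacian subject to the relative, resp.\ absolute, boundary conditions, and then to check that these boundary value problems are strongly elliptic. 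Here one uses that $\Dr^*$ is the maximal extension of the formal adjoint $\D^t$, whereas $\Da^*$ is its minimal extension. Working in a collar $\Y\times[0,\varepsilon)$ and decomposing forms into tangential and normal parts, a boundary regularity argument for the first order operators $\Dr$ and $\Dr^*$ — using that the minimal extension is characterised by vanishing of the pullback to $\Y$ — shows that $\dom(\triangle_{\textup{rel}})$ consists of the $H^2$ forms $\w$ with $\iota^*\w=0$ and $\iota^*(\D^t\w)=0$, while $\dom(\triangle_{\textup{abs}})$ consists of the $H^2$ forms $\w$ with $\iota^*(\ast\w)=0$ and $\iota^*(\ast\D\w)=0$. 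These are precisely the relative and absolute boundary conditions, which are strongly elliptic by [Gi]; since a strongly elliptic boundary value problem on a compact manifold has compact resolvent and hence discrete spectrum, the point $0$ is isolated in $\sigma(\triangle_{\textup{rel}})$ and in $\sigma(\triangle_{\textup{abs}})$, and both complexes are Fredholm by the cited criterion.

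Next I would compute the cohomology. By [BL1], $H^i(\domr,\Dr)\cong\ker\triangle_{\textup{rel},i}$ and $H^i(\doma,\Da)\cong\ker\triangle_{\textup{abs},i}$, the spaces of harmonic forms obeying the relative, resp.\ absolute, boundary conditions. The elliptic boundary regularity for strongly elliptic problems [Gi] shows that each such harmonic form is smooth up to $\Y$ and, in the relative case, satisfies $\iota^*\w=0$; hence every class has a harmonic representative lying in $\Omega^*_{\min}(M,E)$, resp.\ in $\Omega^*_{\max}(M,E)=\Omega^*(M,E)$. Consequently the inclusions $(\Omega^*_{\min}(M,E),\D)\hookrightarrow(\domr,\Dr)$ and $(\Omega^*(M,E),\D)\hookrightarrow(\doma,\Da)$ are surjective on cohomology, while injectivity follows from the weak Hodge decomposition of [BL1] applied to a smooth $\D$-closed form; this already proves the asserted computation of the cohomology from the smooth subcomplexes. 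Finally the relative de Rham theorem identifies $H^i(\Omega^*_{\min}(M,E),\D)$ with $H^i(M,\Y,E)$ and $H^i(\Omega^*(M,E),\D)$ with $H^i(M,E)$, which completes the argument.

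The step I expect to be the main obstacle is the identification, in the first paragraph, of the purely functional-analytic domains of $\triangle_{\textup{rel}}$ and $\triangle_{\textup{abs}}$ with the PDE-theoretic domains cut out by the classical boundary conditions: this rests on the boundary regularity of the first order operators $\Dr$ and $\Dr^*$ together with the characterisation of the minimal extension by vanishing of the pullback to $\Y$. Once this is in place, the remaining assertions are essentially bookkeeping within the frameworks of [BL1] and [Gi].
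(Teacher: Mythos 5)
Your proposal is correct and follows essentially the same route as the paper: both reduce the theorem to the abstract Hilbert-complex machinery of [BL1] and then discharge the analytic content by identifying the domains of $\triangle_{\textup{rel}}$ and $\triangle_{\textup{abs}}$ with the classical relative and absolute boundary conditions and invoking strong ellipticity from [Gi]. The paper is terser — it cites a trace theorem to identify the boundary conditions, [Gi, Lemma 1.11.1] for strong ellipticity, [BL1, Theorem 2.4] for Fredholmness of the complex, [BL1, Theorem 3.5] for the smooth-subcomplex computation, and [RS, Section 4] for the twisted de Rham theorem — whereas you unroll some of these citations into explicit arguments (compact resolvent $\Rightarrow$ discrete spectrum $\Rightarrow$ spectral gap at $0$; harmonic representatives are smooth up to the boundary). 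The one place where you are slightly looser than the paper is the injectivity of the inclusion of smooth subcomplexes on cohomology: "weak Hodge decomposition applied to a smooth $\D$-closed form" does not quite do the job by itself, since one must show that a smooth form exact in the Hilbert complex is exact within the smooth subcomplex (a regularity statement), which is exactly what [BL1, Theorem 3.5] provides; the paper simply cites it. This is a minor gap of emphasis rather than of substance.
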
 \ \\
[-5mm] In the untwisted setup this theorem is essentially the statement of [BL1, Theorem 4.1]. The theorem remains true in the general setup. An analogue of the trace theorem [P, Theorem 1.9], in case of flat vector bundles, allows an explicit computation of the boundary conditions for $\triangle_{\textup{rel}}$ and $\triangle_{\textup{abs}}$. Then [Gi, Lemma 1.11.1] implies strong ellipticity of the Laplacians. Note that this result in the reference [Gi] is proved explicitly, even though other aspects of [Gi, Section 1.11] are rather expository.
\\[3mm] By strong ellipticity the Laplacians $\triangle_{\textup{rel}}$ and $\triangle_{\textup{abs}}$ are Fredholm and by [BL1, Theorem 2.4] the complexes $(\domr, \Dr)$ and $(\doma, \Da)$ are Fredholm as well. By [BL1, Theorem 3.5] their cohomology indeed can be computed from the smooth subcomplexes $(\Omega^*_{\min}(M,E), \D)$ and $(\Omega^*_{\max}(M,E), \D)$, respectively.
\\[3mm] Finally, the relation to the relative and absolute cohomolgy (the twisted de Rham theorem) is proved in [RS, Section 4] for flat Hermitian metrics, but an analogous proof works in the general case. Corresponding results hold also for the complexes associated to the dual connection $\D'$. 
\\[3mm] Furthermore, the Riemannian metric $g^M$ and the fixed orientation on $M$ give rise to the Hodge-star operator for any $k=0,..,m=\dim M$: $$*:\Omega^k(M,E)\to \Omega^{m-k}(M,E).$$ Define $$\G :=i^r(-1)^{\frac{k(k+1)}{2}}*:\Omega^k(M,E)\to \Omega^{m-k}(M,E), \quad r:= (\dim M+1)/2.$$ This operator extends to a well-defined self-adjoint involution on $L^2_*(M,E)$, which we also denote by $\G$. The following properties of $\G$ are essential for the later construction. 
\begin{lemma}\label{G-Lemma}
The self-adjoint involution $\G$ relates the minimal and maximal closed extensions of $\D$ and $\D'$ as follows $$\G\Dr \G=(\Da')^*, \quad \G\Da \G=(\Dr')^*.$$
\end{lemma}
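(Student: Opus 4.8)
The plan is to derive both identities from a single pointwise relation between differential operators together with soft functional-analytic manipulations; recall that, since $\dim M$ is odd, $\G$ is a bounded self-adjoint involution of $L^2_*(M,E)$, in particular unitary and equal to its own inverse.

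\emph{Step 1 (the pointwise identity).} First I would prove $\G\,\D\,\G=\D'^t$ as an identity of differential operators on $\Omega^*(M,E)$. The operator $\G$ is $i^r(-1)^{k(k+1)/2}$ times the Hodge $*$-operator and therefore acts only on the form component, leaving the $E$-factor untouched; the formal adjoint $\D^t$, on the other hand, is computed with respect to the $L^2$-product built from $h^E$. This mismatch is exactly why the dual connection must appear: the defining property \eqref{dual-connection} of $\D'$ is what compensates for $h^E$ failing to be $\D$-parallel. Concretely I would start from the classical local formula expressing the codifferential of the twisted de Rham complex through $*$ and the connection, substitute the normalization constant of $\G$ (which, $\dim M$ being odd, already forces $\G^2=\mathrm{id}$), and verify that the signs and powers of $i$ collapse to leave precisely $\D'^t$, with no residual sign. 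Taking formal adjoints of this identity and using $\G^t=\G$ and $\G^2=\mathrm{id}$ yields the companion relation $\G\,\D^t\,\G=\D'$, consistent with the fact that $\D$ is itself the $h^E$-dual of $\D'$. This sign bookkeeping is the only genuine computation in the proof, and it is the step I expect to be the main obstacle.

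\emph{Step 2 (passage to the minimal extensions).} Since $\G$ is bounded and boundedly invertible and maps $\Omega^*_0(M,E)$ bijectively onto itself, conjugation by $\G$ commutes with graph closure: for a closable operator $T$ on $L^2_*(M,E)$ one has $\overline{\G T\G}=\G\,\overline{T}\,\G$, because $\G\oplus\G$ is a bounded invertible (hence homeomorphic) operator on $L^2_*(M,E)\oplus L^2_*(M,E)$ which maps the graph of $T$ onto the graph of $\G T\G$ and therefore the closure of the former onto the closure of the latter. Applying this with $T=\D|_{\Omega^*_0(M,E)}$ and invoking Step 1,
\[
\G\,\Dr\,\G=\overline{\G\,\D|_{\Omega^*_0(M,E)}\,\G}=\overline{\D'^t|_{\Omega^*_0(M,E)}}=\D'^t_{\min}.
\]
As $\D'^t_{\min}$ is closed and densely defined, $(\D'^t_{\min})^{**}=\D'^t_{\min}$, so $\D'^t_{\min}=\bigl((\D'^t_{\min})^*\bigr)^*=(\Da')^*$; this is the first identity. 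Running the same argument with $T=\D^t|_{\Omega^*_0(M,E)}$ and the companion relation of Step 1 gives $\G\,\D^t_{\min}\,\G=\Dr'$.

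\emph{Step 3 (passage to the maximal extensions).} For any densely defined closed operator $A$ one has $(\G A\G)^*=\G A^*\G$, since $\G=\G^*=\G^{-1}$. Taking $A=\D^t_{\min}$ and using the last relation of Step 2 together with the definition $\Da=(\D^t_{\min})^*$,
\[
\G\,\Da\,\G=\G\,(\D^t_{\min})^*\,\G=\bigl(\G\,\D^t_{\min}\,\G\bigr)^*=(\Dr')^*,
\]
which is the second identity, completing the proof.
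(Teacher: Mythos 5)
Your proof is correct and takes essentially the same route as the paper: both establish the pointwise identity $\G\D\G=(\D')^t$ (and its companion), transfer it to a closed extension by exploiting that conjugation by the bounded unitary involution $\G$ commutes with forming closed extensions, and then recover the remaining identity by taking Hilbert-space adjoints. The only variation is one of dualization: you conjugate the graph closures (minimal extensions) first and pass to the maximal ones via adjoints, while the paper conjugates the distributionally defined maximal extensions first and then takes adjoints to reach the minimal ones.
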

\begin{proof}
One first checks explicitly, cf. [BGV, Proposition 3.58] $$\G \D \G =(\D')^t, \quad \G\D' \G=\D^t.$$
Recall that the maximal domain of $\D, \D'$ can also be characterized as a subspace of $L^2_*(M,E)$ with its image under $\D, \D'$ being again in $L^2_*(M,E)$. Since $\G$ gives an involution on $L^2_*(M,E)$, we obtain: 
\begin{align*}
\G \Da \G =(\D')^t_{\textup{max}}, \quad \G\Da' \G=\Da^t&, \\
\textup{i.e.} \quad \G \Da \G =(\Dr')^*, \quad \G\Da' \G=\Dr^*&.
\end{align*}
Taking adjoints on both sides of the last relation, we obtain the full statement of the lemma, since $\G$ is self-adjoint. 
\end{proof} \ \\
\\[-7mm] Now we can introduce the following central concepts.
\begin{defn}\label{domain}
$(\domm, \DD):=(\domr, \Dr)\oplus (\doma, \Da).$ The chirality operator $\GG$ on $(\domm, \DD)$ by definition acts anti-diagonally with respect to the direct sum of the components
\begin{align}\label{chirality}
\GG :=\left(\begin{array}{rr} 0 & \G \\ \G & 0 \end{array}\right).
\end{align}
\end{defn}\ \\
\\[-7mm] The Fredholm complex $(\domm, \DD)$ with the chirality operator $\GG$ is in case of a flat Hermitian metric a complex with Poincare duality, in the sense of [BL1, Lemma 2.16], i.e. $$\D h^E=0 \ \Rightarrow \ \GG\DD=\DD^*\GG,$$ which follows directly from Lemma \ref{G-Lemma}. We now apply the concepts of Braverman and Kappeler to our new setup.
\begin{defn}\label{odd-signature} The odd-signature operator of the Hilbert complex $(\domm, \DD)$ is defined as follows $$\B:=\GG\DD+\DD\GG.$$
\end{defn}\ \\
\\[-7mm] Before we can state some basic properties of the odd signature operator, let us recall the notions of the Gauss-Bonnet operator and its relative and absolute self-adjoint extensions. The Gauss-Bonnet operator 
$$D^{GB}:=\D+\D^t,$$ admits two natural self-adjoint extensions
\begin{align}\label{gauss-bonnet-rel-abs}
D^{GB}_{\textup{rel}}=\Dr+\Dr^*, \ D^{GB}_{\textup{abs}}=\Da+\Da^*,
\end{align}
respectively called the relative and the absolute self-adjoint extensions. Their squares are correspondingly the relative and the absolute Laplace operators:
$$\triangle_{\textup{rel}}=(D^{GB}_{\textup{rel}})^*D^{GB}_{\textup{rel}}, 
\quad \triangle_{\textup{abs}}=(D^{GB}_{\textup{abs}})^*D^{GB}_{\textup{abs}}.$$
Similar definitions, of course, hold for the Gauss-Bonnet Operator associated to the dual covariant derivative $\D'$. Now we can state the following basic result.
\begin{lemma}\label{odd-signature-laplacian} The leading symbols of $\B$ and $\GG \left(D^{GB}_{\textup{rel}}\oplus D'^{GB}_{\textup{abs}}\right)$ coincide and moreover
$$\dom(\B)= \dom \left(D^{GB}_{\textup{rel}}\oplus D'^{GB}_{\textup{abs}}\right).$$
\end{lemma}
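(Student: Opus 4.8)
The plan is to unwind the definitions of $\B$ and $\DD$ in terms of the direct-sum decomposition $(\domm,\DD) = (\domr,\Dr)\oplus(\doma,\Da)$, and then match them componentwise against $\GG\bigl(D^{GB}_{\textup{rel}}\oplus D'^{GB}_{\textup{abs}}\bigr)$. First I would write, using Definition \ref{odd-signature} and the anti-diagonal form \eqref{chirality} of $\GG$, that on the direct sum $L^2_*(M,E)\oplus L^2_*(M,E)$ the operator $\B = \GG\DD + \DD\GG$ takes the form
\begin{align*}
\B = \begin{pmatrix} 0 & \G \\ \G & 0 \end{pmatrix}\begin{pmatrix} \Dr & 0 \\ 0 & \Da \end{pmatrix} + \begin{pmatrix} \Dr & 0 \\ 0 & \Da \end{pmatrix}\begin{pmatrix} 0 & \G \\ \G & 0 \end{pmatrix} = \begin{pmatrix} 0 & \G\Dr + \Da\G \\ \G\Da + \Dr\G & 0 \end{pmatrix},
\end{align*}
where here $\Dr,\Da$ are to be understood as the full operators $\Dr\oplus\Dr^*$-type differentials of the respective complexes acting on the total $L^2$-space; care must be taken that the complex $\DD$ carries the differential in one degree and its "transpose part" is encoded by $\GG$. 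The leading symbol computation then reduces to the statement $\G\D\G = (\D')^t$ and $\G\D'\G = \D^t$ from the proof of Lemma \ref{G-Lemma}: the principal symbol of $\B$ agrees with that of $\GG(D^{GB}\oplus D'^{GB})$ because both are first-order and the conjugation by $\G$ interchanges $\D$ with $(\D')^t$ at the symbol level, so the off-diagonal symbol of $\B$ equals $\sigma(\G)\bigl(\sigma(\Dr)+\sigma(\Da')^t\bigr)$, matching $\sigma(\GG)\sigma(D^{GB}_{\textup{rel}}\oplus D'^{GB}_{\textup{abs}})$.

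The domain equality is the substantive point. I would argue $\dom(\B) = \dom(\DD)\cap\dom(\GG\DD\GG)$, and then use the involution property of $\GG$ together with Lemma \ref{G-Lemma}: $\GG\DD\GG = \GG(\Dr\oplus\Da)\GG$, and componentwise $\G\Dr\G = (\Da')^*$, $\G\Da\G = (\Dr')^*$. Since $(\Da')^* = \Da'^{t,*} $-adjoint relations identify these with the codifferential parts $\Dr^*, \Da^*$ up to passing to the dual connection, one gets $\dom(\GG\DD\GG) = \dom(\Da'^*)\oplus\dom(\Dr'^*)$ in the appropriate ordering, and intersecting with $\dom(\DD) = \dom(\Dr)\oplus\dom(\Da)$ produces exactly $\dom(D^{GB}_{\textup{rel}})\oplus\dom(D'^{GB}_{\textup{abs}}) = \bigl(\dom\Dr\cap\dom\Dr'^*\bigr)\oplus\bigl(\dom\Da\cap\dom\Da'^*\bigr)$, which by \eqref{gauss-bonnet-rel-abs} is $\dom(D^{GB}_{\textup{rel}}\oplus D'^{GB}_{\textup{abs}})$. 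Here I must be careful that the Gauss–Bonnet operators in the statement are built from $\D$ and its \emph{formal} adjoint $\D^t$, while the relations from Lemma \ref{G-Lemma} naturally produce the \emph{Hilbert-space} adjoints $\Dr^*$, and reconciling these requires knowing $\Dr^* = (\D^t)_{\max}$ and the analogous fact for the dual connection — this is where the trace-theorem input behind Theorem \ref{thm41} enters.

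The main obstacle I anticipate is precisely this bookkeeping of which connection ($\D$ versus $\D'$) and which extension (minimal versus maximal, formal adjoint versus Hilbert adjoint) appears in each slot of the $2\times 2$ matrix, and verifying that the two off-diagonal blocks of $\B$ really do reassemble, after conjugation by $\G$, into $D^{GB}_{\textup{rel}}$ in one slot and $D'^{GB}_{\textup{abs}}$ in the other rather than some mismatched pairing. Once the identity $\B = \GG\bigl(D^{GB}_{\textup{rel}}\oplus D'^{GB}_{\textup{abs}}\bigr)$ is established on the nose as an operator identity (not merely at symbol level), both claims of the lemma follow immediately, since $\GG$ is a bounded involution and hence preserves domains; so in fact the cleanest route is to prove this operator identity directly and deduce the symbol and domain statements as corollaries.
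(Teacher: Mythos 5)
The central flaw in your proposal is the claim that $\B = \GG\bigl(D^{GB}_{\textup{rel}}\oplus D'^{GB}_{\textup{abs}}\bigr)$ holds ``on the nose as an operator identity.'' It does not, unless $h^E$ is flat. Write out the off-diagonal entries: the $(2,1)$ entry of $\B$ is $\G\Dr + \Da\G$, while that of $\GG\bigl(D^{GB}_{\textup{rel}}\oplus D'^{GB}_{\textup{abs}}\bigr)$ is $\G\Dr + \G\Dr^*$. Using $\G\Dr^*\G = \Da'$ from Lemma \ref{G-Lemma} this second term equals $\Da'\G$, so the two entries differ by $(\Da-\Da')\G$ — a nonzero bounded operator whenever $\D\neq\D'$. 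The same happens in the $(1,2)$ slot. So the two operators are not equal; they differ by a zero-order (endomorphism-valued) term coming from $\D-\D'$.

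This gap is precisely what the paper's proof is designed to bridge, and it is the one idea missing from your proposal: \emph{any two connections differ by a zero-order operator}, which extends to a bounded operator on $L^2$ and therefore changes neither the leading symbol nor the domain of a closed extension. Once that observation is in hand, the symbol identity is immediate from $\G\D\G=(\D')^t$, $\G\D^t\G=\D'$, and the domain identity follows from $\dom(\D_{\min}^*)=\dom(\G\D_{\max}\G)$ and $\dom(\D_{\max}^*)=\dom(\G\D_{\min}\G)$ — both of which again use only that bounded perturbations don't move domains. Your invocation of the trace theorem behind Theorem \ref{thm41} is not what is needed here. Relatedly, your domain bookkeeping contains sign-of-prime slips: $\dom(D^{GB}_{\textup{rel}}) = \dom(\Dr)\cap\dom(\Dr^*)$, not $\dom(\Dr)\cap\dom(\Dr'^*)$, and $\dom(D'^{GB}_{\textup{abs}}) = \dom(\Da')\cap\dom(\Da'^*)$, not $\dom(\Da)\cap\dom(\Da'^*)$. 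These identifications are \emph{true as sets} but only because of the zero-order observation, which your argument never states; as written, the chain of equalities silently conflates $\D$ with $\D'$. Your computation of $\dom(\B) = \dom(\DD)\cap\dom(\GG\DD\GG)$ and the $2\times 2$ matrix unwinding are both correct and match the paper's spirit; what is needed to finish is to replace ``the identity holds on the nose'' with ``the two operators agree up to a bounded zero-order perturbation, hence have the same leading symbol and the same domain.''
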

\begin{proof} First recall the relations $$\G\D\G=(\D')^t, \quad \G\D^t\G=\D'.$$ All connections differ by an endomorphism-valued differential form of degree one, which can be viewed as a differential operator of order zero. This implies the statement on the leading symbol of $\B$ and $\GG \left(D^{GB}_{\textup{rel}}\oplus D'^{GB}_{\textup{abs}}\right)$
\\[3mm] A differential operator of zero order naturally extends to a bounded operator on the $L^2$-Hilbert space, and hence does not pose additional restrictions on the domain, in particular we obtain (compare Lemma \ref{G-Lemma})
$$\dom (\D_{\min}^*)=\dom (\G \D_{\max} \G), \quad \dom (\D_{\max}^*)=\dom (\G \D_{\min} \G).$$ Using these domain relations we find:
\begin{align*}
\dom(\B)= \dom \left(\GG(D^{GB}_{\textup{rel}}\oplus D'^{GB}_{\textup{abs}})\right)=\dom \left(D^{GB}_{\textup{rel}}\oplus D'^{GB}_{\textup{abs}}\right).
\end{align*}
\end{proof} \ \\
\\[-7mm] Note by the arguments of the lemma above that $\B$ is a bounded perturbation of a closed operator $\GG \left(D^{GB}_{\textup{rel}}\oplus D'^{GB}_{\textup{abs}}\right)$ and hence is closed, as well. Before we continue analyzing the spectral properties of the odd-signature operator $\B$, let us introduce some concepts and notation.
\begin{defn}
Let $D$ be a closed operator in a separable Hilbert space. An angle $\theta\in [0,2\pi)$ is called an "Agmon angle" for $D$, if for $R_{\theta}\subset \C$ being the cut in $\C$ corresponding to $\theta$ $$R_{\theta}:=\{z \in \C | z=|z|\cdot e^{i\theta}\}$$
we have the following spectral relation $$R_{\theta}\cap \textup{Spec}(D)\backslash \{0\}=\emptyset.$$
\end{defn}

\begin{thm}\label{Freddy} \textup{[S. Agmon, R. Seeley]}
Let $(K,g^K)$ be a smooth compact oriented Riemannian manifold with boundary $\partial K$. Let $(F,h^F)$ be a Hermitian vector bundle over $K$. The metric structures $(g^K,h^F)$ define an $L^2$-inner product. Let $$D:C^{\infty}(K,F)\to C^{\infty}(K,F)$$ be a differential operator of order $\w$ such that $\w \cdot \textup{rank}F$ is even. Consider a boundary value problem $(D,B)$ strongly elliptic with respect to $\C \backslash \R^*$ in the sense of [Gi]. Then
\begin{enumerate}
\item  $D_B$ is a Fredholm operator with compact resolvent and discrete spectrum of eigenvalues of finite (algebraic) multiplicity, accumulating only at infinity. 
\item The operator $D_B$ admits an Agmon angle $\theta \in (-\pi, 0)$ and the associated zeta-function 
\begin{align*}
&\zeta(s, D_B):=\sum\limits_{\lambda \in \textup{Spec}(D_B)\backslash \{0\}}m(\lambda)\cdot \lambda_{\theta}^{-s}, \quad \textup{Re}(s) > \frac{\dim K}{\w},
\end{align*}
where $\lambda_{\theta}^{-s}:=\textup{exp}(-s\cdot \log_{\theta}\lambda)$ and $m(\lambda)$ denotes the multiplicity of the eigenvalue $\lambda$,
is holomorphic for $\textup{Re}(s) > \dim K / \w$ and admits a meromorphic extension to the whole complex plane $\C$ with $s=0$ being a regular point. 
\end{enumerate}
\end{thm}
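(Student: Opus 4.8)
The statement is classical: it is the combination of Agmon's resolvent analysis for elliptic boundary value problems with parameter and Seeley's theory of complex powers, and I sketch how its two assertions are assembled from this machinery in the form presented in [Gi, Section 1.11]. The strategy is to base everything on parameter-dependent resolvent estimates. Strong ellipticity of $(D,B)$ with respect to $\C\backslash\R^*$ provides, via the parameter-dependent parametrix construction of Agmon and Seeley, the following invertibility: for $\lambda\in\C\backslash\R^*$ with $|\lambda|$ large the operator $D_B-\lambda$ is invertible and $\|(D_B-\lambda)^{-1}\|_{L^2\to L^2}\le C|\lambda|^{-1}$, together with the corresponding bound $L^2(K,F)\to H^\w(K,F)$. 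In particular the resolvent set of $D_B$ is non-empty, and since elliptic regularity up to the boundary gives $\dom(D_B)\subseteq H^\w(K,F)$, the Rellich embedding shows that $(D_B-\lambda_0)^{-1}$ is a compact operator on $L^2(K,F)$ for $\lambda_0$ in the resolvent set. Writing $D_B-\lambda=(D_B-\lambda_0)(\mathrm{Id}-(\lambda-\lambda_0)(D_B-\lambda_0)^{-1})$ and invoking the analytic Fredholm theorem for the holomorphic family of compact operators $(\lambda-\lambda_0)(D_B-\lambda_0)^{-1}$, one concludes that $D_B$ is Fredholm, that its spectrum is discrete, consists of eigenvalues of finite algebraic multiplicity, and accumulates only at infinity. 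This is assertion (i).

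For (ii), the resolvent estimate forces every eigenvalue of sufficiently large modulus to lie in $\R^*$; hence any ray $R_\theta$ with $\theta\in(-\pi,0)$, being disjoint from $\R^*$, meets only the finitely many eigenvalues of bounded modulus, and a small perturbation of $\theta$ inside $(-\pi,0)$ yields an Agmon angle. Given such $\theta$, define the complex powers $D_B^{-s}$ by the Seeley contour integral $\frac{i}{2\pi}\int_{\Gamma}\lambda_\theta^{-s}(D_B-\lambda)^{-1}\,d\lambda$, where $\Gamma$ encircles $R_\theta$. The parameter-dependent symbol calculus --- whose interior part is Seeley's calculus of elliptic operators with parameter and whose boundary part is the contribution of the half-space model problem for $(D,B)$ with parameter --- shows that $D_B^{-s}$ is a pseudodifferential operator of order $-\w\,\textup{Re}(s)$, hence trace class for $\textup{Re}(s)>\dim K/\w$, with $\textup{Tr}\,D_B^{-s}=\sum_{\lambda\neq0}m(\lambda)\lambda_\theta^{-s}=\zeta(s,D_B)$ by Lidskii's theorem; this gives holomorphy of $\zeta(\cdot,D_B)$ on $\textup{Re}(s)>\dim K/\w$ (and, as a by-product, the Weyl-type bound $\#\{\lambda\in\textup{Spec}(D_B):|\lambda|\le t\}=O(t^{\dim K/\w})$). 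The same symbol expansion produces an asymptotic expansion of $\textup{Tr}\,(D_B-\lambda)^{-N}$, $N>\dim K/\w$, as $|\lambda|\to\infty$ in $\C\backslash\R^*$, and via the Mellin transform a meromorphic continuation of $\zeta(\cdot,D_B)$ to all of $\C$ with at most simple poles on the arithmetic progression $s=(\dim K-j)/\w$, $j\ge0$. That $s=0$ is a regular point is the delicate last step: it amounts to the absence of a logarithmic term at the relevant order in the trace expansion of the differential boundary value problem $(D,B)$, so that the only candidate pole, the one at $j=\dim K$, is cancelled by the factor $\Gamma(s)^{-1}$ relating $\zeta(\cdot,D_B)$ to the trace expansion; this is exactly what [Gi, Lemma 1.11.1] establishes for the present class of operators (the hypothesis that $\w\cdot\textup{rank}\,F$ be even is carried along as part of the setup under which this machinery applies).

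The technical heart --- and the step I would expect to be the main obstacle if one wished to write out full details rather than cite --- is the parameter-dependent parametrix near $\partial K$: away from the boundary one has Seeley's classical calculus of operators with parameter, but near the boundary one must solve the half-line model problem for $(D,B)$ with the spectral parameter present, verify that strong ellipticity with respect to $\C\backslash\R^*$ supplies its unique solvability and the requisite symbol estimates uniformly for $\lambda$ in that region, and track the resulting boundary symbols through the trace expansion finely enough to see that they neither destroy the simple-pole structure nor generate a pole at $s=0$. All of this is contained in the Agmon--Seeley theory (see [Gi, Section 1.11] and the references therein), which we therefore invoke rather than reprove.
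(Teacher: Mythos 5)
Your proposal is correct and takes essentially the same route as the paper, which simply observes that Gilkey's notion of strong ellipticity combines ellipticity with Agmon's conditions as in Seeley's treatment and then cites [Ag], [Se1], [Se2] for both assertions; your sketch fleshes out precisely the resolvent-estimate/Rellich/analytic-Fredholm argument for (i) and the complex-powers/trace-expansion/Mellin argument for (ii), which is the content of those references. The one place you deviate bibliographically is in crediting the $s=0$ regularity to [Gi, Lemma 1.11.1] (the paper invokes that lemma elsewhere, for strong ellipticity of the Laplacians, and attributes the present theorem entirely to Agmon and Seeley), but the mechanism you describe --- the candidate pole cancelled by the $\Gamma(s)^{-1}$ factor in the Mellin/trace-expansion picture --- is the right one and is supplied by the Seeley references.
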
\ \\ 
\\[-7mm] For the proof of the theorem note that the notion of strong ellipticity in the sense of [Gi] in fact combines ellipticity with Agmon's conditions, as in the treatment of elliptic boundary conditions by R.T. Seeley in [Se1, Se2]. The statement of the theorem above follows then from [Ag] and [Se1, Se2].
\begin{remark}
The definition of a zeta-function, as in Theorem \ref{Freddy} (ii), also applies to any operator $D$ with finite spectrum $\{\lambda_1,..,\lambda_n\}$ and finite respective multiplicities $\{m_1,..,m_n\}$. For a given Agmon angle $\theta \in [0,2\pi)$ the associated zeta-function $$\zeta_{\theta}(s,D):=\sum_{i=1, \lambda_i\neq 0}^nm_i\cdot (\lambda_i)^{-s}_{\theta}$$ is holomorphic for all $s\in \C$, since the sum is finite and the eigenvalue zero is excluded.
\end{remark}\ \\
\\[-7mm] Now we return to our specific setup. The following result is important in view of the relation between $\B$ and the Gauss-Bonnet operators with relative and absolute boundary conditions, as established in Lemma \ref{odd-signature-laplacian}.
\begin{prop}\label{strongly-elliptic0}
The operators 
$$D=\GG (D^{GB}_{\textup{rel}}\oplus D'^{GB}_{\textup{abs}}), \quad D^2=\triangle_{\textup{rel}}\oplus \triangle'_{\textup{abs}}$$
are strongly elliptic with respect to $\C\backslash \R^*$ and $\C\backslash \R^+$, respectively, in the sense of P. Gilkey [Gi]. 
\end{prop}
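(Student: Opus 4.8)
The plan is to reduce everything to the already-established strong ellipticity of the relative and absolute Laplacians from Theorem \ref{thm41} and then to verify Agmon's condition for the relevant rays.

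\begin{proof}[Proof sketch]
The strategy is as follows. First I would observe that strong ellipticity is a local condition near the boundary which only depends on the leading symbol of the operator together with the leading part of the boundary operator; the zero-order terms coming from the difference between $\D$, $\D'$ and the flat reference connection, and between $\G$ and its symbol, do not affect it. By Lemma \ref{odd-signature-laplacian} the operator $D=\GG(D^{GB}_{\textup{rel}}\oplus D'^{GB}_{\textup{abs}})$ has the same leading symbol as $\GG(D^{GB}\oplus D^{GB})$ and the same domain as $D^{GB}_{\textup{rel}}\oplus D'^{GB}_{\textup{abs}}$, so its boundary conditions are (up to lower order) the relative boundary condition on the first summand and the absolute boundary condition on the second. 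Thus it suffices to treat each summand separately: $\GG D^{GB}_{\textup{rel}}$ with relative boundary conditions and $\GG D'^{GB}_{\textup{abs}}$ with absolute boundary conditions.

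Next I would pass to the square. Since $\GG$ is a self-adjoint involution with $\GG\DD=\DD^*\GG$ at the level of leading symbols (this follows from Lemma \ref{G-Lemma}, as noted after Definition \ref{domain}), one computes $D^2=\triangle_{\textup{rel}}\oplus\triangle'_{\textup{abs}}$ on the respective domains, which is exactly the claim for $D^2$. The strong ellipticity of $\triangle_{\textup{rel}}$ and $\triangle'_{\textup{abs}}$ with respect to $\C\backslash\R^+$ is precisely Theorem \ref{thm41} (the Laplacians are strongly elliptic in the sense of [Gi], and strong ellipticity of a Laplace-type operator is automatically with respect to the cone $\C\backslash\R^+$ since the leading symbol is positive). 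This disposes of the second assertion. For the first assertion, I would then use the standard fact (see [Gi, Lemma 1.11.1] and the surrounding discussion) that a first-order operator $D$ whose square $D^2$ is strongly elliptic with respect to $\C\backslash\R^+$ and whose boundary condition is compatible with passing to the square — i.e. the graph of the boundary condition for $D$ squares to the boundary condition for $D^2$ — is itself strongly elliptic with respect to $\C\backslash\R^*$. Concretely, for $\lambda\notin\R$ the resolvent parameter $\lambda$ for $D$ corresponds to $\lambda^2\notin\R^+$ for $D^2$, and the Agmon estimates for $D-\lambda$ follow from those for $D^2-\lambda^2$ together with ellipticity of $D$ itself.

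The main obstacle I anticipate is the verification that the boundary conditions are genuinely compatible with squaring, i.e. that imposing the relative (resp. absolute) boundary condition on $\GG D^{GB}$ really does reproduce the relative (resp. absolute) Laplace boundary condition on the square, and that Agmon's condition along rays in $\C\backslash\R^*$ holds for the first-order operator and not merely for its square. This is where one must invoke the explicit description of the boundary conditions via the analogue of the trace theorem ([P, Theorem 1.9] in the flat case) already used in the discussion after Theorem \ref{thm41}: near $\partial M$, in a collar, $D^{GB}_{\textup{rel}}$ takes a normal form $\sigma(\partial_u + A)$ with $A$ a self-adjoint tangential operator, the relative boundary condition is the spectral (or a local) projection associated to $A$, and one checks directly that the model problem on the half-line $\R^+$ has no nonzero $L^2$ solutions of $(\GG(\partial_u+A)-\lambda)f=0$ satisfying the boundary condition, for $\lambda$ in the relevant sector. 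Since $\GG$ anticommutes appropriately with the normal symbol, this model computation reduces to the corresponding Laplace model problem, which is covered by [Gi, Lemma 1.11.1]. Once the model problem is settled, globalizing and combining the two summands is routine, and the same argument with $\D$ replaced by $\D'$ handles the absolute piece.
\end{proof}
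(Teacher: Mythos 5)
Your strategy --- pass to the square, identify $D^2=\triangle_{\textup{rel}}\oplus\triangle'_{\textup{abs}}$, cite Theorem \ref{thm41} for strong ellipticity of the Laplacians with respect to $\C\backslash\R^+$, and then invoke a Gilkey lemma to promote this to strong ellipticity of the first-order operator $D$ with respect to $\C\backslash\R^*$ --- is exactly the paper's argument; most of the extra model-problem discussion you supply is what sits inside the cited Gilkey lemma. Two corrections, though. First, the lemma you want for the first-order step is [Gi, Lemma 1.11.2], not Lemma 1.11.1: Lemma 1.11.1 is the one underlying Theorem \ref{thm41} (strong ellipticity of the Laplace-type boundary problems), while Lemma 1.11.2 is precisely the result that passes from a strongly elliptic second-order square to the first-order operator. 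Second, the claim that it suffices to treat the two summands $\GG D^{GB}_{\textup{rel}}$ and $\GG D'^{GB}_{\textup{abs}}$ separately is not right: $\GG$ is anti-diagonal with respect to the decomposition $\domm=\domr\oplus\doma$ from Definition \ref{domain}, so $D=\GG(D^{GB}_{\textup{rel}}\oplus D'^{GB}_{\textup{abs}})$ intertwines the two factors and has no block-diagonal structure; it is the square $D^2$ that becomes block-diagonal (one then checks $\G D'^{GB}_{\textup{abs}}\G D^{GB}_{\textup{rel}}=\triangle_{\textup{rel}}$ and $\G D^{GB}_{\textup{rel}}\G D'^{GB}_{\textup{abs}}=\triangle'_{\textup{abs}}$ via Lemma \ref{G-Lemma}). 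Since your reduction goes through $D^2$ anyway, this slip does not affect the conclusion, but the way you phrase the decomposition should be fixed.
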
\ \\
\\[-7mm] The fact that $D^2=\triangle_{\textup{rel}}\oplus \triangle'_{\textup{rel}}$ is strongly elliptic with respect to $\C\backslash \R^+$ is already encountered in Theorem \ref{thm41}. The strong ellipticity of $D$ now follows from [Gi, Lemma 1.11.2]. Note that this result in the reference [Gi] is proved explicitly, even though other aspects of [Gi, Section 1.11] are rather expository.
\\[3mm] Since Lemma \ref{odd-signature-laplacian} asserts the equality between the leading symbols of the differential operators $\B,D$ and moreover the equality of the associated boundary conditions, the odd signature operator $\B$ and its square $\B^2$ are strongly elliptic as well. This proves together with Theorem \ref{Freddy} the next proposition.
\begin{prop}\label{strongly-elliptic}
The operators $\B$ and $\B^2$ are strongly elliptic with respect to $\C\backslash \R^*$ and $\C\backslash \R^+$, respectively, in the sense of P. Gilkey [Gi]. The operators $\B, \B^2$ are discrete with their spectrum accumulating only at infinity. 
\end{prop}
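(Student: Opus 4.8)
The plan is to transport the strong ellipticity already established on the Gauss--Bonnet side over to the odd-signature operator, exploiting that the two differ only by a zeroth-order term, and then to feed the result into Theorem~\ref{Freddy}. Concretely, from Lemma~\ref{odd-signature-laplacian} I would use the two facts we are entitled to: the leading symbol of $\B$ agrees with that of $D:=\GG(D^{GB}_{\textup{rel}}\oplus D'^{GB}_{\textup{abs}})$, and $\dom(\B)=\dom(D)$. After spelling out the analogue of the trace theorem for flat bundles, the equality of domains says precisely that $\B$ and $D$ carry the \emph{same} boundary condition $B$ in Gilkey's sense. Since strong ellipticity with respect to a cone $\C\setminus\Lambda$ is a property of the pair (leading symbol, boundary condition) alone — the Agmon cone condition involves only the leading symbol, the ellipticity of the boundary problem involves the leading symbol together with $B$ — Proposition~\ref{strongly-elliptic0} (strong ellipticity of $D$ with respect to $\C\setminus\R^*$) immediately yields strong ellipticity of $(\B,B)$ with respect to $\C\setminus\R^*$.

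For $\B^2$ I would argue in the same spirit. Writing $\B=D+A$ with $A$ a bundle-endomorphism-valued $1$-form, i.e.\ a differential operator of order $0$, one gets $\B^2=D^2+DA+AD+A^2$, whose correction terms have order at most $1$; hence $\B^2$ is a second-order operator with the same leading ($2$-)symbol as $D^2=\triangle_{\textup{rel}}\oplus\triangle'_{\textup{abs}}$. For the boundary condition one checks $\dom(\B^2)=\{u\in\dom(\B):\B u\in\dom(\B)\}=\dom(D^2)$, which follows from $\dom(\B)=\dom(D)$ once one observes that the zeroth-order term $A$ preserves the relative, resp.\ absolute, boundary condition (wedging with a $1$-form is compatible with pullback to $\partial M$), so $Au\in\dom(D)$ whenever $u\in\dom(D)$. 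Thus $\B^2$ has the same leading symbol and the same boundary condition as $D^2$, and strong ellipticity of $D^2$ with respect to $\C\setminus\R^+$ — already contained in Theorem~\ref{thm41} via [Gi, Lemma 1.11.1] — transfers to $\B^2$.

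Finally, since $M$ is odd-dimensional the de Rham bundle $\Lambda^*T^*M\otimes E$ has even rank, so the parity hypothesis of Theorem~\ref{Freddy} is met for both $\B$ (order $1$) and $\B^2$ (order $2$); part~(i) of that theorem then gives that $\B$ and $\B^2$ are Fredholm with compact resolvent and discrete spectrum of finite multiplicity accumulating only at infinity, which is the remaining assertion. The one place that needs genuine care — and the step I expect to be the main obstacle — is making the transfer $D\rightsquigarrow\B$, $D^2\rightsquigarrow\B^2$ rigorous in Gilkey's \emph{precise} formulation: one must verify that the lower-order perturbation leaves both the Agmon cone condition and the ellipticity of the boundary value problem intact, and, crucially, that the boundary operators of $\B$ and $\B^2$ are literally those of $D$ and $D^2$ and not merely equal up to lower order. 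Once Gilkey's definition is unwound this is routine, but it is the point at which the argument has to be pinned down carefully.
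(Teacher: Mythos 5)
Your proposal follows essentially the same route the paper takes: invoke Lemma~\ref{odd-signature-laplacian} for the equality of leading symbols and domains (hence boundary conditions) between $\B$ and $D=\GG(D^{GB}_{\textup{rel}}\oplus D'^{GB}_{\textup{abs}})$, observe that strong ellipticity in Gilkey's sense depends only on the pair (leading symbol, boundary condition) so that Proposition~\ref{strongly-elliptic0} transfers to $\B$ and $\B^2$, and finish with Theorem~\ref{Freddy}(i). Your treatment is a bit more explicit about $\B^2$ (writing $\B=D+A$ and noting $A$ preserves the boundary conditions) but the idea is identical to the paper's one-paragraph argument.
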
 \ \\
\\[-7mm] Let now $\lambda \geq 0$ be any non-negative real number. Denote by $\Pi_{\B^2, [0,\lambda]}$ the spectral projection of $\B^2$ onto eigenspaces with eigenvalues of absolute value in the interval $[0,\lambda]$:
$$\Pi_{\B^2, [0,\lambda]}:=\frac{i}{2\pi}\int_{C(\lambda)}(\B^2-x)^{-1}dx,$$
with $C(\lambda)$ being any closed counterclockwise circle surrounding eigenvalues of absolute value in $[0,\lambda]$ with no other eigenvalue inside. One finds using the analytic Fredholm theorem that the range of the projection lies in the domain of $\B^2$ and that the projection commutes with $\B^2$.
\\[3mm] Since $\B^2$ is discrete, the spectral projection $\Pi_{\B^2, [0,\lambda]}$ is of finite rank, i.e. with a finite-dimensional image. In particular $\Pi_{\B^2,[0,\lambda]}$ is a bounded operator in $L^2_*(M,E\oplus E)$.  Hence with [K, Section 4, p.155] the decomposition
\begin{align}\label{decomp-L-2}
L^2_*(M,E\oplus E)=\textup{Image}\Pi_{\B^2, [0,\lambda]}\oplus \textup{Image}(\one - \Pi_{\B^2, [0,\lambda]}),
\end{align}
is a direct sum decomposition into closed subspaces of the Hilbert space $L^2_*(M,E\oplus E)$.
\\[3mm] Note that if $\B^2$ is self-adjoint, the decomposition is orthogonal with respect to the fixed $L^2-$Hilbert structure, i.e. the projection $\Pi_{\B^2,[0,\lambda]}$ is an orthogonal projection, which is the case only if the Hermitian metric $h^E$ is flat with respect to $\nabla$.
\\[3mm] The decomposition induces by restriction a decomposition of $\domm$, which was introduced in Definition \ref{domain}:
$$\domm=\domm_{[0,\lambda]}\oplus \domm_{(\lambda, \infty)}.$$ Since $\DD$ commutes with $\B, \B^2$ and hence also with $\Pi_{\B^2, [0,\lambda]}$, we find that the decomposition above is in fact a decomposition into subcomplexes:
\begin{align}\nonumber
(\domm, \DD)=(\domm_{[0,\lambda]}, \DD_{[0,\lambda]})\oplus (\domm_{(\lambda, \infty)}, \DD_{(\lambda, \infty)}) \\ \label{decomposition}
\textup{where} \ \DD_{\mathcal{I}}:=\DD|_{\domm_{\mathcal{I}}} \ \textup{for} \ \mathcal{I}=[0,\lambda] \ \textup{or} \ (\lambda, \infty).
\end{align}
Further $\GG$ also commutes with $\B, \B^2$ and hence also with $\Pi_{\B^2, [0,\lambda]}$. Thus as above we obtain $$\GG = \GG_{[0,\lambda]} \oplus \GG_{(\lambda, \infty)}.$$ Consequently the odd-signature operator of the complex $(\domm, \DD)$ decomposes correspondingly 
\begin{align}\nonumber
&\B=\B^{[0,\lambda]}\oplus \B^{(\lambda, \infty)}\\ 
\textup{where} \quad  &\B^{\mathcal{I}}:=\GG_{\mathcal{I}}\DD_{\mathcal{I}}+\DD_{\mathcal{I}}\GG_{\mathcal{I}} \ \textup{for} \ \mathcal{I}=[0,\lambda] \ \textup{or} \ (\lambda, \infty). \label{66}
\end{align}
The closedness of the subspace Image$(1-\Pi_{\B^2,[0,\lambda]})$ implies that the domain of $\B^{(\lambda, \infty)}$ 
$$\dom (\B^{(\lambda, \infty)}):=\dom (\B)\cap \textup{Image}(1-\Pi_{\B^2,[0,\lambda]})$$
is closed under the graph-norm, hence the operator $\B^{(\lambda, \infty)}$ is a closed operator in the Hilbert space $\textup{Image}(1-\Pi_{\B^2,[0,\lambda]})$.
\\[3mm] We need to analyze the direct sum component $\B^{(\lambda, \infty)}$. For this we proceed with the following general functional analytic observations.
\begin{prop}\label{compact}
Let $D$ be a closed operator in a separable Hilbert space $(H, \langle \cdot ,\cdot \rangle)$. The domain $\dom (D)$ is a Hilbert space with the graph-norm $$\langle x,y\rangle_D=\langle x,y\rangle+\langle Dx,Dy\rangle$$ for any $x,y \in \dom (D)$. Let Res$D \neq \emptyset$. Then the following statements are equivalent \\
1) \ The inclusion $\iota : \dom (D) \hookrightarrow H$ is a compact operator \\
2) \ $D$ has a compact resolvent, i.e. for some (and thus for all) $z \in$ Res$(D)$ the resolvent operator $(D-z)^{-1}$ is a compact operator on $H$.
\end{prop}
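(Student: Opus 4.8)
The plan is to prove the equivalence of (1) and (2) by the standard resolvent argument, being careful about the fact that $D$ need not be self-adjoint (only closed with nonempty resolvent set).

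\medskip

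\textbf{Proof proposal.} Fix $z_0 \in \mathrm{Res}(D)$ and write $R_0 := (D-z_0)^{-1}$, a \emph{bounded} operator on $H$ with range exactly $\dom(D)$. The key structural observation is that $R_0$, viewed as a map $H \to \dom(D)$, is a bounded bijection with bounded inverse $D - z_0$: indeed $\|R_0 x\|_D^2 = \|R_0 x\|^2 + \|D R_0 x\|^2 = \|R_0 x\|^2 + \|x + z_0 R_0 x\|^2$, which is bounded by a constant times $\|x\|^2$ since $R_0$ is bounded on $H$; conversely for $y \in \dom(D)$, $\|(D-z_0)y\|^2 \le 2\|Dy\|^2 + 2|z_0|^2\|y\|^2 \le C\|y\|_D^2$. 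Hence $R_0 \colon H \to \dom(D)$ is a topological isomorphism, and $R_0 \colon H \to H$ (i.e. the resolvent on $H$) factors as $\iota \circ R_0^{\mathrm{iso}}$, where $R_0^{\mathrm{iso}} \colon H \to \dom(D)$ is this isomorphism and $\iota \colon \dom(D) \hookrightarrow H$ is the inclusion.

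\medskip

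Now the two implications follow from the ideal property of compact operators. If $\iota$ is compact, then $(D-z_0)^{-1} = \iota \circ R_0^{\mathrm{iso}}$ is compact on $H$ as a composition of a bounded operator with a compact one; this gives (1) $\Rightarrow$ (2) for the particular point $z_0$, and the resolvent identity $(D-z)^{-1} = (D-z_0)^{-1} + (z - z_0)(D-z)^{-1}(D-z_0)^{-1}$ (or more simply $(D-z)^{-1} = (1 + (z-z_0)(D-z)^{-1})(D-z_0)^{-1}$) propagates compactness to every $z \in \mathrm{Res}(D)$, since $(D-z)^{-1}$ is bounded and composing a bounded operator with a compact one again yields a compact operator. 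Conversely, if $(D-z_0)^{-1}$ is compact on $H$ for some $z_0$, then $\iota = (D-z_0)^{-1} \circ (D - z_0)|_{\dom(D)}$... more carefully: $\iota = R_0 \circ (R_0^{\mathrm{iso}})^{-1}$ where the first factor $R_0$ is compact on $H$ and the second factor $(R_0^{\mathrm{iso}})^{-1} = (D - z_0) \colon \dom(D) \to H$ is bounded, so $\iota$ is compact. This gives (2) $\Rightarrow$ (1).

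\medskip

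I do not expect a genuine obstacle here; the only point requiring care — and the one I would flag as the crux — is that since $D$ is merely closed (not self-adjoint), one cannot invoke the spectral theorem, so the argument must go purely through the resolvent identity and the boundedness of $(D-z)^{-1} \colon H \to \dom(D)$, using only that $\mathrm{Res}(D) \neq \emptyset$ to get started; after that, everything is an application of the two-sided ideal property of the compact operators together with the factorization of the inclusion through the resolvent. The passage from ``some $z$'' to ``all $z \in \mathrm{Res}(D)$'' in (2) is exactly where the resolvent identity is needed and should be stated explicitly.
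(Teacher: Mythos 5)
Your proof is correct and follows essentially the same route as the paper: factor the resolvent $(D-z_0)^{-1}\colon H\to H$ through the bounded map $H\to\dom(D)$ (graph norm) and the inclusion $\iota\colon\dom(D)\hookrightarrow H$, use the two-sided ideal property of compact operators in both directions, and invoke the resolvent identity to pass from one point of $\mathrm{Res}(D)$ to all of them. Your version is a bit more explicit — you verify by hand that $(D-z_0)^{-1}\colon H\to\dom(D)$ is a topological isomorphism, where the paper just asserts boundedness "by definition of the resolvent set" — but the underlying argument is the same.
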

\begin{proof}
Assume first that the inclusion $\iota : \dom (D) \hookrightarrow H$ is a compact operator. Since Spec$D \neq \C$ the resolvent set Res$(D)$ is not empty. For any $z \in$ Res$(D)$ the resolvent operator $$(D-z)^{-1}: H \to \dom (D)$$ exists and is bounded, by definition of the resolvent set. With the inclusion $\iota$ being a compact operator we find directly that $(D-z)^{-1}$ is compact as an operator from $H$ to $H$. Finally, if $(D-z)^{-1}$ is compact for some $z \in$ Res$(D)$, then by the second resolvent identity it is compact for all $z \in$ Res$(D)$, see also [K, p.187].
\\[3mm] Conversely assume that for some (and therefore for all) $z \in$ Res$(D)$ the resolvent operator $(D-z)^{-1}$ is compact as an operator from $H$ into $H$. Observe $$\iota = (D-z)^{-1}\circ (D-z):\dom (D) \hookrightarrow H. $$ By compactness of the resolvent operator, $\iota$ is compact as an operator between the Hilbert spaces $\dom (D)$ and $H$.
\end{proof}

\begin{prop}\label{index-zero}
Let $D$ be a closed operator in a separable Hilbert space $H$ with Res$(D) \neq \emptyset$ and compact resolvent. Then $D$ is a Fredholm operator with $$\textup{index} \, D=0.$$
\end{prop}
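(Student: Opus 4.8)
The plan is to fix a point $z_0\in\Res(D)$ and to factor $D$ through the bijection $D-z_0$ together with a compact perturbation of the identity, whose index is classical. Since $z_0\in\Res(D)$, the operator $D-z_0\colon\dom(D)\to H$ is a bijection with bounded inverse $(D-z_0)^{-1}$, which is moreover compact by hypothesis. Set
$$S:=D\,(D-z_0)^{-1}=\one+z_0\,(D-z_0)^{-1},$$
a bounded operator defined on all of $H$, and note that $K:=z_0\,(D-z_0)^{-1}$ is compact. For $x\in\dom(D)$ one has $S(D-z_0)x=D(D-z_0)^{-1}(D-z_0)x=Dx$, so $D=S\circ(D-z_0)$ as operators with common domain $\dom(D)$.

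Next I would invoke Riesz--Schauder theory (e.g.\ [K]): since $K$ is compact, $S=\one+K$ is a Fredholm operator on $H$ with $\textup{index}\,S=0$; in particular $\ker S$ is finite-dimensional and $\textup{ran}\,S$ is closed of finite codimension. The bijection $D-z_0$ is trivially Fredholm of index zero, and it remains only to transport these properties across the factorization. Since $(D-z_0)^{-1}$ is a genuine two-sided inverse, $x\in\ker D$ holds if and only if $(D-z_0)x\in\ker S$, so $\ker D=(D-z_0)^{-1}(\ker S)$ and $D-z_0$ restricts to an isomorphism of this space onto $\ker S$; hence $\dim\ker D=\dim\ker S<\infty$. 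Likewise $\textup{ran}\,D=S\big((D-z_0)(\dom D)\big)=S(H)=\textup{ran}\,S$, which is closed of finite codimension. Therefore $D$ is Fredholm and
$$\textup{index}\,D=\dim\ker D-\textup{codim}\,\textup{ran}\,D=\dim\ker S-\textup{codim}\,\textup{ran}\,S=\textup{index}\,S=0.$$

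The only point needing a little care is the domain bookkeeping in the factorization: one must check that $S$ is genuinely everywhere defined and bounded on $H$ — immediate from $S=\one+z_0(D-z_0)^{-1}$ — and that the identity $D=S\circ(D-z_0)$ holds exactly on $\dom(D)$, so that the kernel and range of $D$ may be computed by composing a closed bijective operator with a bounded Fredholm operator, sidestepping the pathologies of composing unbounded operators. Granting this, the argument is purely formal and rests entirely on the classical fact that the identity plus a compact operator is Fredholm of index zero.
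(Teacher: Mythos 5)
Your proof is correct, and it takes a genuinely different route from the paper's. The paper proceeds through Kato's perturbation theory: it invokes Proposition \ref{compact} to see that the inclusion $\iota\colon\dom(D)\hookrightarrow H$ (graph norm to $H$-norm) is compact, interprets this as saying $\iota$ is relatively $D$-compact, notes that $D-\lambda\iota$ is invertible for $\lambda\in\Res(D)$ and hence trivially Fredholm of index zero, and then cites the stability of the Fredholm index under relatively compact perturbations \textup{[K, Theorem 5.26]} to conclude. You instead factor $D=S\circ(D-z_0)$ with $S=\one+z_0(D-z_0)^{-1}$ a compact perturbation of the identity on $H$, apply the classical Riesz--Schauder alternative to get that $S$ is Fredholm of index zero, and then transport kernel and range across the bijection $D-z_0\colon\dom(D)\to H$ by the explicit identities $\ker D=(D-z_0)^{-1}(\ker S)$ and $\textup{ran}\,D=\textup{ran}\,S$. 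Your route is more elementary and self-contained: it uses only the Fredholm alternative for ``identity plus compact'' rather than the heavier relative-compactness machinery, and it consumes the compactness hypothesis directly on $(D-z_0)^{-1}$ without passing through the compact-embedding reformulation of Proposition \ref{compact}. Both proofs are sound; the paper's version has the advantage of fitting into the surrounding sequence of lemmas (which already set up the compact inclusion), while yours would stand alone with fewer prerequisites. One cosmetic remark: if $z_0=0$ (i.e.\ $D$ itself is invertible) your $S$ is just the identity and the conclusion is trivial, so there is no degenerate case to worry about.
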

\begin{proof}
By closedness of $D$ the domain $\dom(D)$ turns into a Hilbert space equipped with the graph norm. By Proposition \ref{compact} the natural inclusion  $$\iota : \dom (D) \hookrightarrow H$$ is a compact operator. Therefore, viewing $\dom (D)$ as a subspace of $H$, i.e. endowed with the inner-product of $H$, the inclusion $$\iota : \dom (D) \subset H \hookrightarrow H$$ is relatively $D$-compact in the sense of [K, Section 4.3, p.194]. More precisely this means, that if for a sequence $\{u_n\}\subset \dom (D)$ both $\{u_n\}$ and $\{Du_n\}$ are bounded sequences in $H$, then $\{\iota (u_n)\}\subset H$ has a convergent subsequence.
\\[3mm] Now for any $\lambda \in \C\backslash \textup{Spec}(D)$ the operator $$(D-\lambda \iota):\dom (D) \subset H \rightarrow H$$ is invertible and hence trivially a Fredholm operator with trivial kernel and closed range $H$. In particular $$\textup{index}(D-\lambda \iota)=0.$$
Now, from stability of the Fredholm index under relatively compact perturbations (see [K, Theorem 5.26] and the references therein) we infer with the inclusion $\iota$ being relatively compact, that $D$ is a Fredholm operator of zero index:$$\textup{index}\, D=\textup{index}(D-\lambda\iota)=0.$$
\end{proof}

\begin{cor}\label{bijective} The operator $\B^{(\lambda, \infty)}: \dom (\B^{(\lambda, \infty)})\to \textup{Image}(1-\Pi_{\B^2,[0,\lambda]})$ of the complex $(\domm_{(\lambda, \infty)}, \DD_{(\lambda, \infty)})$ with $\lambda \geq 0$ is bijective.
\end{cor}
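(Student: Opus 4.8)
The plan is to reduce the claim to the functional analysis in Propositions \ref{compact} and \ref{index-zero}, by noting that the \emph{square} of $\B^{(\lambda,\infty)}$ is precisely $\B^{2}$ restricted to the high-energy subspace $\textup{Image}(\one-\Pi_{\B^2,[0,\lambda]})$, an operator whose spectrum avoids $0$. First recall that $\GG$, $\DD$, and hence $\B=\GG\DD+\DD\GG$, all commute with the finite-rank projection $\Pi_{\B^2,[0,\lambda]}$; thus $\textup{Image}(\one-\Pi_{\B^2,[0,\lambda]})$ is a closed $\B$-invariant subspace, which is why $\B^{(\lambda,\infty)}$ is a well-defined closed operator there (as already noted), and keeping track of domains one obtains $(\B^{(\lambda,\infty)})^{2}=\B^{2}|_{\textup{Image}(\one-\Pi_{\B^2,[0,\lambda]})}$ with matching domains, since any $x\in\dom(\B)\cap\textup{Image}(\one-\Pi_{\B^2,[0,\lambda]})$ forces $\B x$ into the same invariant subspace.

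Next I would show this square is bijective with bounded inverse. By its very construction as a contour integral, $\Pi_{\B^2,[0,\lambda]}$ is the Riesz projection onto the sum of the generalized eigenspaces of $\B^{2}$ for eigenvalues of absolute value $\le\lambda$, which by the discreteness of $\B^{2}$ (Proposition \ref{strongly-elliptic}) is a finite, finite-dimensional sum that in particular exhausts the generalized kernel of $\B^{2}$; hence $\textup{Spec}\bigl((\B^{(\lambda,\infty)})^{2}\bigr)=\textup{Spec}(\B^{2})\cap\{|z|>\lambda\}$ does not contain $0$, so $(\B^{(\lambda,\infty)})^{2}$ maps its domain bijectively, with bounded inverse, onto $\textup{Image}(\one-\Pi_{\B^2,[0,\lambda]})$. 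In particular $\ker\B^{(\lambda,\infty)}\subseteq\ker(\B^{(\lambda,\infty)})^{2}=\{0\}$, so $\B^{(\lambda,\infty)}$ is injective. Separately, since $\B$ is strongly elliptic (Proposition \ref{strongly-elliptic}) and discrete, Theorem \ref{Freddy}(i) endows it with a compact resolvent, so Proposition \ref{compact} makes the inclusion $\dom(\B)\hookrightarrow L^{2}_{*}(M,E\oplus E)$ compact; restricting this inclusion to the closed subspace $\dom(\B^{(\lambda,\infty)})$ — whose graph norm coincides with that of $\B$ — and to the closed range $\textup{Image}(\one-\Pi_{\B^2,[0,\lambda]})$ from \eqref{decomp-L-2} shows that $\dom(\B^{(\lambda,\infty)})\hookrightarrow\textup{Image}(\one-\Pi_{\B^2,[0,\lambda]})$ is compact as well. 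Moreover, since $\B$ is discrete we have $\textup{Spec}(\B)\neq\C$, and for $z\notin\textup{Spec}(\B)$ the bounded operator $(\B-z)^{-1}$ commutes with $\Pi_{\B^2,[0,\lambda]}$ and restricts to a bounded inverse of $\B^{(\lambda,\infty)}-z$, so $\textup{Res}(\B^{(\lambda,\infty)})\neq\emptyset$. Propositions \ref{compact} and \ref{index-zero} then yield that $\B^{(\lambda,\infty)}$ is Fredholm with $\textup{index}\,\B^{(\lambda,\infty)}=0$.

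Finally, an injective Fredholm operator of index zero has trivial cokernel and closed range, hence is surjective, so $\B^{(\lambda,\infty)}$ is bijective. The main obstacle is the invertibility of the square: since $h^{E}$ need not be $\D$-flat, $\B^{2}$ is in general not self-adjoint, so "eigenspace" must be read in the generalized (Riesz) sense, and one has to verify both that $\Pi_{\B^2,[0,\lambda]}$ splits off the \emph{entire} generalized kernel of $\B^{2}$ and that this splitting is compatible with squaring and with the domain identification above. Once this spectral bookkeeping is secured, the remainder is just the formal fact that a closed operator with bijective square is itself bijective, here routed cleanly through Propositions \ref{compact} and \ref{index-zero}.
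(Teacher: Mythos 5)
Your proof is correct and follows essentially the same route as the paper: restrict a resolvent of $\B$ to the high-energy subspace to get a compact resolvent for $\B^{(\lambda,\infty)}$, invoke Proposition \ref{index-zero} to obtain Fredholmness with index zero, and combine this with injectivity to conclude bijectivity. The only real divergence is that you derive injectivity somewhat indirectly by first establishing bijectivity of $(\B^{(\lambda,\infty)})^2$, whereas the paper treats injectivity as immediate from the definition of the Riesz projection: any $x\in\ker\B^{(\lambda,\infty)}$ lies in $\ker\B^2$, hence in $\textup{Image}\,\Pi_{\B^2,[0,\lambda]}$ since $0\in[0,\lambda]$, and therefore cannot also lie in $\textup{Image}(\one-\Pi_{\B^2,[0,\lambda]})$ unless it vanishes.
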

\begin{proof} 
Consider any $\lambda \in \C \backslash \textup{Spec}\B$. By the strong ellipticity of $\B$, the operator $$(\B-\lambda):\dom (\B)\rightarrow L^2_*(M,E\oplus E)$$ is bijective with compact inverse. Hence we immediately find that the restriction 
\begin{align*}
(\B^{(\lambda, \infty)}-\lambda)\equiv (\B-\lambda)\restriction \textup{Im}(1-\Pi_{\B^2,[0,\lambda]}): \dom (\B^{(\lambda, \infty)})\rightarrow \textup{Im}(1-\Pi_{\B^2,[0,\lambda]})
\end{align*}
is bijective with compact inverse, as well. Now we deduce from Proposition \ref{index-zero} that $\B^{(\lambda, \infty)}$ is Fredholm with $$\textup{index}\, \B^{(\lambda, \infty)}=0.$$
The operator $\B^{(\lambda, \infty)}$ is injective, by definition. Combining injectivity with the vanishing of the index, we derive surjectivity of $\B^{(\lambda, \infty)}$. This proves the statement.
\end{proof}\ \\
\\[-7mm] Note, that in case of a flat Hermitian metric the assertion of the previous corollary is simply the general fact that a self-adjoint Fredholm operator is invertible if and only if its kernel is trivial. 
\begin{cor}\label{cohomology} The subcomplex $(\domm_{(\lambda, \infty)}, \DD_{(\lambda, \infty)})$ is acyclic and $$H^*((\domm_{[0,\lambda]}, \DD_{[0,\lambda]}))\cong H^*(\domm, \DD).$$
\end{cor}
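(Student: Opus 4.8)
The plan is to deduce both assertions from Corollary \ref{bijective} together with the standard homological algebra of a direct sum of complexes. First I would recall that by the decomposition \eqref{decomposition} we have a splitting of Fredholm complexes $(\domm, \DD)=(\domm_{[0,\lambda]}, \DD_{[0,\lambda]})\oplus (\domm_{(\lambda, \infty)}, \DD_{(\lambda, \infty)})$, so that the cohomology of $(\domm, \DD)$ is the direct sum of the cohomologies of the two summands. Hence it suffices to prove that the complex $(\domm_{(\lambda, \infty)}, \DD_{(\lambda, \infty)})$ is acyclic; the isomorphism $H^*((\domm_{[0,\lambda]}, \DD_{[0,\lambda]}))\cong H^*(\domm, \DD)$ then follows immediately.

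For acyclicity of $(\domm_{(\lambda, \infty)}, \DD_{(\lambda, \infty)})$, the key point is that on this summand the odd-signature operator $\B^{(\lambda, \infty)}=\GG_{(\lambda,\infty)}\DD_{(\lambda,\infty)}+\DD_{(\lambda,\infty)}\GG_{(\lambda,\infty)}$ is bijective by Corollary \ref{bijective}. I would argue as follows. Suppose $\w\in\domm_{(\lambda,\infty)}$ is $\DD_{(\lambda,\infty)}$-closed, i.e. $\DD_{(\lambda,\infty)}\w=0$; I must exhibit $\eta$ with $\DD_{(\lambda,\infty)}\eta=\w$. Since $\B^{(\lambda,\infty)}$ is surjective, write $\w=\B^{(\lambda,\infty)}\xi=\GG_{(\lambda,\infty)}\DD_{(\lambda,\infty)}\xi+\DD_{(\lambda,\infty)}\GG_{(\lambda,\infty)}\xi$ for some $\xi\in\dom(\B^{(\lambda,\infty)})$. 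Applying $\DD_{(\lambda,\infty)}$ and using $\DD_{(\lambda,\infty)}^2=0$ together with $\DD_{(\lambda,\infty)}\w=0$ gives $\DD_{(\lambda,\infty)}\GG_{(\lambda,\infty)}\DD_{(\lambda,\infty)}\xi=0$. One then has to check that $\GG_{(\lambda,\infty)}\DD_{(\lambda,\infty)}\xi$ again lies in the domain and that applying $\GG_{(\lambda,\infty)}\B^{(\lambda,\infty)}$ to it (or, equivalently, a short manipulation using that $\GG_{(\lambda,\infty)}$ is an involution on Image$(1-\Pi_{\B^2,[0,\lambda]})$) forces $\DD_{(\lambda,\infty)}\xi=0$, whence $\w=\DD_{(\lambda,\infty)}(\GG_{(\lambda,\infty)}\xi)$ exhibits $\w$ as a boundary. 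The cleanest route is to observe that on the $(\lambda,\infty)$-part $\B^{(\lambda,\infty)}$ is bijective and commutes with the Laplacian $\triangle^{(\lambda,\infty)}=(\B^{(\lambda,\infty)})^2$; since $\triangle^{(\lambda,\infty)}$ is then also bijective, the usual Hodge-theoretic argument for a finite-dimensional-cohomology Hilbert complex with Poincar\'e-type duality (as in [BL1, Section 2]) yields that the harmonic space $\ker\triangle^{(\lambda,\infty)}$ is trivial, hence the complex is acyclic by [BL1, Lemma 2.3] (the identification of reduced cohomology with the harmonic space).

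Concretely I would invoke [BL1]: for any Fredholm Hilbert complex the cohomology is isomorphic to the space of harmonic elements, i.e. $H^k\cong\ker\DD_{(\lambda,\infty)}^{(k)}\cap\ker(\DD_{(\lambda,\infty)}^{(k-1)})^*$. Because $\B^{(\lambda,\infty)}$ is bijective and its leading symbol agrees with that of a Gauss--Bonnet operator (Lemma \ref{odd-signature-laplacian}), its square is a nonnegative self-adjoint-like operator that is bijective on Image$(1-\Pi_{\B^2,[0,\lambda]})$, so $0$ is not in its spectrum, which is exactly the statement $\ker\triangle^{(\lambda,\infty)}=0$; therefore all harmonic spaces vanish and $(\domm_{(\lambda,\infty)}, \DD_{(\lambda,\infty)})$ is acyclic. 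Combining this with the direct-sum decomposition of cohomology completes the proof.

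The main obstacle I anticipate is the bookkeeping around self-adjointness: when $h^E$ is not flat with respect to $\D$, the operator $\B^2$ need not be self-adjoint and the decomposition \eqref{decomp-L-2} is not orthogonal, so one cannot directly quote the self-adjoint Hodge theory of [BL1]. The remedy is to note that acyclicity is a purely algebraic (not metric) statement about the complex $(\domm_{(\lambda,\infty)}, \DD_{(\lambda,\infty)})$: one shows directly, as sketched above, that $\DD_{(\lambda,\infty)}$-closed elements are $\DD_{(\lambda,\infty)}$-exact using only the bijectivity of $\B^{(\lambda,\infty)}$ from Corollary \ref{bijective} and the relations $\DD_{(\lambda,\infty)}^2=0$, $\GG_{(\lambda,\infty)}^2=\one$, $\B^{(\lambda,\infty)}=\GG_{(\lambda,\infty)}\DD_{(\lambda,\infty)}+\DD_{(\lambda,\infty)}\GG_{(\lambda,\infty)}$. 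Everything else is routine.
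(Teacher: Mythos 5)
Your proposal is correct and takes essentially the same route as the paper: the paper simply invokes the purely algebraic result [BK2, Lemma 5.8], which is precisely the argument you spell out (from $\DD_{\mathcal{I}}\w=0$ and $\w=\B^{\mathcal{I}}\xi$ one gets $\B^{\mathcal{I}}(\DD_{\mathcal{I}}\xi)=\GG_{\mathcal{I}}\DD_{\mathcal{I}}^2\xi+\DD_{\mathcal{I}}\GG_{\mathcal{I}}\DD_{\mathcal{I}}\xi=0$, so $\DD_{\mathcal{I}}\xi=0$ by injectivity of $\B^{\mathcal{I}}$ and $\w=\DD_{\mathcal{I}}\GG_{\mathcal{I}}\xi$ is exact), combined with the direct-sum decomposition \eqref{decomposition}. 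You also correctly identify and sidestep the pitfall that $\B^2$ need not be self-adjoint when $h^E$ is not flat, so the harmonic-space route via [BL1] is not available and the argument must stay purely algebraic, which matches the paper's reliance on the algebraic lemma.
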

\begin{proof}
Corollary \ref{bijective} allows us to apply the purely algebraic result [BK2, Lemma 5.8]. Consequently $(\domm_{(\lambda, \infty)}, \DD_{(\lambda, \infty)})$ is an acyclic complex. Together with the decomposition \eqref{decomposition} this proves the assertion.
\end{proof} \ \\
\\[-7mm] Observe that since the spectrum of $\B^2$ is discrete accumulating only at infinity, $(\domm_{[0,\lambda]}, \DD_{[0,\lambda]})$ is a complex of finite-dimensional complex vector spaces with $\GG_{[0,\lambda]}:\domm^k_{[0,\lambda]}\to \domm^{m-k}_{[0,\lambda]}$ being the chirality operator on the complex in the sense of [BK2, Section 1.1]. 
\\[3mm] We also use the notion of determinant lines of finite dimensional complexes in [BK2, Section 1.1], which are given for any finite complex of finite-dimensional vector spaces $(C^*,\partial_*)$ as follows:
$$\textup{Det}H^*(C^*,\partial_*)=\bigotimes\limits_k \det H^k(C^*,\partial_*)^{(-1)^k}, $$
where $\det H^k(C^*,\partial_*)$ is the top exterior power of $H^k(C^*,\partial_*)$ and $\det H^k(C^*,\partial_*)^{-1}\equiv\det H^k(C^*,\partial_*)^*$. We follow [BK2, Section 1.1] and form the "refined torsion" (note the difference to "refined analytic torsion") of the complex $(\domm_{[0,\lambda]}, \DD_{[0,\lambda]})$
\begin{align}\label{finite-torsion}
\rho_{[0,\lambda]}:=c_0\otimes (c_1)^{-1}\otimes \cdots \otimes (c_r)^{(-1)^r} \otimes (\GG_{[0,\lambda]}c_r)^{(-1)^{r+1}}\otimes \cdots \\ \cdots \otimes (\GG_{[0,\lambda]}c_1) \otimes (\GG_{[0,\lambda]}c_0)^{(-1)}\in \textup{Det}(H^*(\domm_{[0,\lambda]}, \DD_{[0,\lambda]})), \nonumber 
\end{align}
where $c_k\in \det H^k(\domm_{[0,\lambda]}, \DD_{[0,\lambda]})$ are arbitrary elements of the determinant lines, $\GG_{[0,\lambda]}$ denotes the chirality operator $\GG_{[0,\lambda]}:\domm^{\bullet}_{[0,\lambda]}\to \domm^{m-\bullet}_{[0,\lambda]}$ extended to determinant lines and for any $v\in \det H^k(\domm_{[0,\lambda]}, \DD_{[0,\lambda]})$ the dual $v^{-1}\in \det H^k(\domm_{[0,\lambda]}, \DD_{[0,\lambda]})^{-1}\equiv \det H^k(\domm_{[0,\lambda]}, \DD_{[0,\lambda]})^*$ is the unique element such that $v^{-1}(v)=1$.
\\[3mm] By Corollary \ref{cohomology} we can view $\rho_{[0,\lambda]}$ canonically as an element of $\textup{Det}(H^*(\domm, \DD))$, which we do henceforth.
\\[3mm] The second part of the construction is the graded determinant. The operator $\B^{(\lambda, \infty)},\lambda \geq 0$ is bijective by Corollary \ref{bijective} and hence by injectivity (put $\mathcal{I}=(\lambda, \infty)$ to simplify the notation) 
\begin{align}\label{kern}
\textup{ker}(\DD_{\mathcal{I}}\GG_{\mathcal{I}})\cap\textup{ker}(\GG_{\mathcal{I}}\DD_{\mathcal{I}})=\{0\}.
\end{align}
Further the complex $(\domm_{\mathcal{I}}, \DD_{\mathcal{I}})$ is acyclic by Corollary \ref{cohomology} and due to $\GG_{\mathcal{I}}$ being an involution on $\textup{Im}(1-\Pi_{\B^2,[0,\lambda]})$ we have
\begin{align}\label{image1}
\textup{ker}(\DD_{\mathcal{I}}\GG_{\mathcal{I}})=\GG_{\mathcal{I}}\textup{ker}(\DD_{\mathcal{I}})=\GG_{\mathcal{I}}\textup{Im}(\DD_{\mathcal{I}})=\textup{Im}(\GG_{\mathcal{I}}\DD_{\mathcal{I}}), \\ \label{image2} 
\textup{ker}(\GG_{\mathcal{I}}\DD_{\mathcal{I}})=\textup{ker}(\DD_{\mathcal{I}})=\textup{Im}(\DD_{\mathcal{I}})=\textup{Im}(\DD_{\mathcal{I}}\GG_{\mathcal{I}}).
\end{align}
We have $\textup{Im}(\GG_{\mathcal{I}}\DD_{\mathcal{I}})+\textup{Im}(\DD_{\mathcal{I}}\GG_{\mathcal{I}})=\textup{Im}(\B^{\mathcal{I}})$ and by surjectivity of $\B^{\mathcal{I}}$ we obtain from the last three relations above
\begin{align}\label{hilbert-decomposition}
\textup{Im}(1-\Pi_{\B^2, [0,\lambda]})=\textup{ker}(\DD_{\mathcal{I}}\GG_{\mathcal{I}})\oplus\textup{ker}(\GG_{\mathcal{I}}\DD_{\mathcal{I}}).
\end{align}
Note that $\B$ leaves $\ker (\DD\GG)$ and $\ker (\GG\DD)$ invariant. Put 
\begin{align*}
\B^{+,(\lambda,\infty)}_{\textup{even}}:=\B^{(\lambda,\infty)}\restriction \domm^{\textup{even}}\cap \ker (\DD\GG), \\
\B^{-,(\lambda,\infty)}_{\textup{even}}:=\B^{(\lambda,\infty)}\restriction \domm^{\textup{even}}\cap \ker (\GG\DD).
\end{align*}
We obtain a direct sum decomposition $$\B^{(\lambda,\infty)}_{\textup{even}}=\B^{+,(\lambda,\infty)}_{\textup{even}} \oplus \B^{-,(\lambda,\infty)}_{\textup{even}}.$$
As a consequence of Theorem \ref{Freddy} (ii) and Proposition \ref{strongly-elliptic} there exists an Agmon angle $\theta\in (-\pi, 0)$ for $\B$, which is clearly an Agmon angle for the restrictions above, as well. 
\\[3mm] By Theorem \ref{Freddy} and Proposition \ref{strongly-elliptic} the zeta function $\zeta_{\theta}(s,\B)$ is holomorphic for Re$(s)$ sufficiently large. The zeta-functions $\zeta_{\theta}(s,\B^{\pm,(\lambda,\infty)}_{\textup{even}})$ of $\B^{\pm,(\lambda,\infty)}_{\textup{even}}$, defined with respect to the given Agmon angle $\theta$, are holomorphic for Re$(s)$ large as well, since the restricted operators have the same spectrum as $\B$ but in general with lower or at most the same multiplicities. 
\\[3mm] We define the \emph{graded zeta-function} $$\zeta_{gr,\theta}(s,\B^{(\lambda,\infty)}_{\textup{even}}):=\zeta_{\theta}(s,\B^{+,(\lambda,\infty)}_{\textup{even}})-\zeta_{\theta}(s,-\B^{-,(\lambda,\infty)}_{\textup{even}}), \ Re(s)\gg 0.$$
\\[3mm] In the next subsection we prove in Theorem \ref{log-det-gr} that the graded zeta-function extends meromorphically to $\C$ and is regular at $s=0$. For the time being we shall assume regularity at zero and define the graded determinant.
\begin{defn}\label{graded-determinant}[Graded determinant] Let $\theta \in (-\pi, 0)$ be an Agmon angle for $\B^{(\lambda, \infty)}$. Then the "graded determinant" associated to $\B^{(\lambda, \infty)}$ and its Agmon angle $\theta$ is defined as follows: $$\det\nolimits_{gr,\theta}(\B^{(\lambda, \infty)}_{\textup{even}}):= \textup{exp}(-\left.\frac{d}{ds}\right|_{s=0} \zeta_{gr,\theta}(s,\B^{(\lambda,\infty)}_{\textup{even}})).$$
\end{defn}
\begin{prop}\label{rho-element} The element $$\rho(\D, g^M, h^E):=\det\nolimits_{gr,\theta}(\B^{(\lambda, \infty)}_{\textup{even}})\cdot \rho_{[0,\lambda]}\in \textup{Det}(H^*(\domm, \DD))$$ is independent of the choice of $\lambda \geq 0$ and choice of Agmon angle $\theta \in (-\pi, 0)$ for the odd-signature operator $\B^{(\lambda, \infty)}$. 
\end{prop}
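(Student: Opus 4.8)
The plan is to prove the two independence statements separately, following the template of Braverman–Kappeler.

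First, for the independence of the Agmon angle $\theta$: fix $\lambda \geq 0$ and let $\theta, \theta' \in (-\pi, 0)$ be two Agmon angles for $\B^{(\lambda,\infty)}$. Since the spectrum of $\B^{(\lambda,\infty)}$ is discrete and accumulates only at infinity (Proposition \ref{strongly-elliptic}), between any two such angles there are only finitely many eigenvalue rays, so it suffices to compare $\zeta_{gr,\theta}$ and $\zeta_{gr,\theta'}$ when no eigenvalue lies in the closed sector between $R_\theta$ and $R_{\theta'}$, in which case the two branches of $\log_\theta$ and $\log_{\theta'}$ differ by a constant $2\pi i$ on the relevant part of the spectrum. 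This changes $\zeta_\theta(s, \B^{\pm,(\lambda,\infty)}_{\textup{even}})$ by a factor involving $\exp(\mp 2\pi i s \, \zeta_{\theta}(0, \cdot))$-type terms; taking the difference that defines the graded zeta-function and differentiating at $s=0$, the change in $\det_{gr,\theta}(\B^{(\lambda,\infty)}_{\textup{even}})$ is governed by $\zeta_{gr,\theta}(0, \B^{(\lambda,\infty)}_{\textup{even}})$, which by the meromorphic continuation (Theorem \ref{log-det-gr}, assumed) and the standard local-index-density argument is an integer-valued, metric-independent quantity; hence the graded determinant changes at most by multiplication by a root of unity — and in fact one shows it is unchanged by carefully tracking signs, exactly as in [BK2]. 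Since $\rho_{[0,\lambda]}$ does not involve $\theta$, the product is unchanged.

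Second, for the independence of $\lambda$: it suffices to compare $\lambda$ and $\lambda'$ with $0 \leq \lambda < \lambda'$ and no eigenvalue of $\B^2$ of absolute value in the half-open interval $(\lambda, \lambda']$ except possibly finitely many; by iterating we may assume there is exactly one such eigenvalue $\mu$, or none. If none, both the finite complex $\domm_{[0,\lambda]}$ and the operator $\B^{(\lambda,\infty)}$ literally agree with their $\lambda'$ counterparts and there is nothing to prove. If there is exactly one eigenvalue $\mu \in (\lambda, \lambda']$, then $\domm_{[0,\lambda']} = \domm_{[0,\lambda]} \oplus \domm_{(\lambda,\lambda']}$ as complexes, and correspondingly $\B^{(\lambda,\infty)} = \B^{(\lambda,\lambda']} \oplus \B^{(\lambda',\infty)}$ where $\B^{(\lambda,\lambda']}$ acts on the finite-dimensional space $\textup{Image}\,\Pi_{\B^2,(\lambda,\lambda']}$. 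Multiplicativity of the graded determinant over such a direct sum — which is legitimate because the two summands have disjoint spectra so the zeta-functions simply add — gives
$$\det\nolimits_{gr,\theta}(\B^{(\lambda,\infty)}_{\textup{even}}) = \det\nolimits_{gr,\theta}(\B^{(\lambda,\lambda']}_{\textup{even}}) \cdot \det\nolimits_{gr,\theta}(\B^{(\lambda',\infty)}_{\textup{even}}).$$
It therefore remains to show that $\det_{gr,\theta}(\B^{(\lambda,\lambda']}_{\textup{even}}) \cdot \rho_{[0,\lambda]} = \rho_{[0,\lambda']}$ inside $\textup{Det}(H^*(\domm,\DD))$, using the canonical identifications from Corollary \ref{cohomology}. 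This is a purely finite-dimensional statement about the complex $\domm_{[0,\lambda']}$ with its chirality operator and the acyclic subcomplex $\domm_{(\lambda,\lambda']}$, and it is exactly [BK2, Lemma 5.8 / Proposition 5.10] (the behaviour of the refined torsion element under splitting off a finite acyclic piece); I would invoke that algebraic lemma directly, checking only that the $\GG$-grading conventions and the definition of the graded determinant on a finite acyclic complex match those in [BK2].

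The main obstacle is the bookkeeping in the finite-dimensional comparison — matching the sign and grading conventions of \eqref{finite-torsion} and Definition \ref{graded-determinant} precisely with the algebraic framework of [BK2, Section 1] so that the cited lemma applies verbatim — together with the point that here $\B$ need not be self-adjoint (the Hermitian metric $h^E$ need not be flat), so one cannot argue via orthogonal spectral decomposition; instead one must rely on the direct-sum decomposition \eqref{decomposition} and the fact, noted after Corollary \ref{bijective}, that $\B^{(\lambda,\infty)}$ is bijective, to legitimise splitting the complex and adding zeta-functions. I expect the non-self-adjoint case to require no new idea beyond replacing "orthogonal projection" by "bounded spectral projection" throughout, but it is the place where the argument must be written with care.
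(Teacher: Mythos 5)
Your treatment of $\lambda$-independence is essentially the paper's: decompose across $\lambda<\mu$, use multiplicativity of the graded determinant over the spectral direct sum, and invoke the finite-dimensional algebraic result of [BK2, Proposition~5.10] to get $\rho_{[0,\mu]}=\det_{gr}(\B^{(\lambda,\mu]}_{\textup{even}})\cdot\rho_{[0,\lambda]}$. That part is fine.

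The $\theta$-independence part has a real gap. You claim that because the spectrum of $\B^{(\lambda,\infty)}$ is discrete and accumulates only at infinity, ``between any two such angles there are only finitely many eigenvalue rays.'' Discreteness alone does not give this: a discrete spectrum accumulating only at infinity can still meet an arbitrary solid angle in infinitely many points (e.g.\ eigenvalues $n\,e^{i\phi_n}$ with $\phi_n\to\theta_0$). Since $\B$ is in general not self-adjoint (the Hermitian metric $h^E$ need not be flat), this is exactly the place where something must be proved. The paper's proof supplies the missing analytic input: $\B$ differs from the \emph{self-adjoint} strongly elliptic operator $D=\GG(D^{GB}_{\textup{rel}}\oplus D'^{GB}_{\textup{abs}})$ by a \emph{bounded} perturbation (Lemma~\ref{odd-signature-laplacian}), and a Neumann-series argument together with the resolvent asymptotics of $D$ from [Se1, Lemma~15] localises $\operatorname{Spec}(\B)$ near the real axis away from a ball, giving \eqref{spectral-cut} and hence finitely many eigenvalues in any solid angle between two Agmon angles in $(-\pi,0)$. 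With that established, the comparison of $\det_{gr,\theta}$ and $\det_{gr,\theta'}$ is elementary: the two logarithm branches agree except on the finitely many eigenvalues being crossed, where they differ by $2\pi i$, so the derivative of the graded zeta-function at $s=0$ changes by an integer multiple of $2\pi i$ and the graded determinant is unchanged. Note also a local confusion in your write-up: if no eigenvalue lies in the closed sector between $R_\theta$ and $R_{\theta'}$ then $\log_\theta$ and $\log_{\theta'}$ \emph{agree} on the spectrum (they do not differ by $2\pi i$), and the detour through the integrality of $\zeta_{gr,\theta}(0)$ via local index densities is neither needed nor straightforwardly correct; the clean mechanism is simply the $2\pi i$ jump across finitely many eigenvalues.
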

\begin{proof}
Let $0 \leq \lambda < \mu < \infty$. We obtain $\domm_{[0,\mu]}=\domm_{[0,\lambda]}\oplus \domm_{(\lambda, \mu]}$ and also $\domm_{(\lambda, \infty)}\!=\domm_{(\lambda, \mu]}\oplus \domm_{(\mu, \infty)}$. Since the odd-signature operator respects this spectral direct sum decomposition (see \eqref{66}), we obtain
$$\det\nolimits_{gr}(\B^{(\lambda, \infty)}_{\textup{even}})=\det\nolimits_{gr}(\B^{(\mu, \infty)}_{\textup{even}})\cdot \det\nolimits_{gr}(\B^{(\lambda, \mu]}_{\textup{even}}).$$
Further the purely algebraic discussion behind [BK2, Proposition 5.10] implies $$\rho_{[0,\mu]}=\det\nolimits_{gr}(\B^{(\lambda, \mu]}_{\textup{even}})\cdot \rho_{[0,\lambda]}.$$ This proves the following equality $$\det\nolimits_{gr}(\B^{(\lambda, \infty)}_{\textup{even}})\cdot \rho_{[0,\lambda]}=\det\nolimits_{gr}(\B^{(\mu, \infty)}_{\textup{even}})\cdot \rho_{[0,\mu]}.$$ 
To see independence of $\theta \in (-\pi, 0)$ note that the strongly elliptic operator (cf. Lemma \ref{odd-signature-laplacian}) $$D:=\GG (D^{GB}_{rel}\oplus D'^{GB}_{abs})$$ is self-adjoint and $\B$ differs from $D$ by a bounded perturbation. By a Neumann-series argument and the asymptotics of the resolvent for $D$ (see [Se1, Lemma 15]) we get: 
\begin{align}\label{spectral-cut}
\forall \theta \in (-\pi, 0): \quad \textup{Spec}(\B)\cap R_{\theta} \quad \textup{is finite.}
\end{align}
By discreteness of $\B$ we deduce that if $\theta, \theta' \in (-\pi,0)$ are both Agmon angles for $\B^{(\lambda, \infty)}$, there are only finitely many eigenvalues of $\B^{(\lambda, \infty)}$ in the solid angle between $\theta$ and $\theta'$. Hence 
\begin{align*}
\left.\frac{d}{ds}\right|_{s=0}\zeta_{gr,\theta}(s,\B^{(\lambda,\infty)}_{\textup{even}}))\equiv \left.\frac{d}{ds}\right|_{s=0}\zeta_{gr,\theta'}(s,\B^{(\lambda,\infty)}_{\textup{even}})) \quad \textup{mod} \ 2\pi i, \\
\textup{and therefore } \ \det\nolimits_{gr,\theta}(\B^{(\lambda, \infty)}_{\textup{even}})=\det\nolimits_{gr,\theta'}(\B^{(\lambda, \infty)}_{\textup{even}}).
\end{align*}
This proves independence of the choice of $\theta \in (-\pi, 0)$ and completes the proof.
\end{proof}\ \\
\\[-7mm] The element $\rho(\D, g^M,h^E)$ is well-defined but a priori not independent of the choice of metrics $g^M, h^E$ and so does not provide a differential invariant. In the next subsection we determine the metric anomaly of $\rho(\D, g^M,h^E)$ in order to construct a differential invariant, which will be called the refined analytic torsion.

\section{Metric Anomaly and Refined Analytic Torsion}\label{anomaly} 
We introduce the notion of the eta-function leading to the notion of the eta-invariant of an elliptic operator. The eta-invariant was first introduced by Atiyah-Patodi-Singer in [APS] as the boundary correction term in their index formula.

\begin{thm}\label{eta-regular-original}[P.B. Gilkey, L. Smith] Let $(K,g^K)$ be a smooth compact oriented Riemannian manifold with boundary $\partial K$. Let $(F,h^F)$ be a Hermitian vector bundle and let the metric structures $(g^K,h^F)$ define an $L^2-$scalar product. Let $$D: C^{\infty}(K,F)\rightarrow C^{\infty}(K,F)$$ be a differential operator of order $\w$ such that $\w \cdot \textup{rank}F$ is even. Let a boundary value problem $(D,B)$ be strongly elliptic with respect to $\C \backslash \R^*$ and an Agmon angle $\theta \in (-\pi, 0)$. Then we have
\begin{enumerate}
\item $D_B$ is a discrete Fredholm operator in the Hilbert space $L^2(K,F)$ and its eta-function $$\eta_{\theta}(s,D_B):=\sum\limits_{\textup{Re}(\lambda)>0}m(\lambda)\cdot \lambda_{\theta}^{-s}-\sum\limits_{\textup{Re}(\lambda)<0}m(\lambda)\cdot(-\lambda)_{\theta}^{-s},$$ where $m(\lambda)$ denotes the finite (algebraic) multiplicity of the eigenvalue $\lambda$ , is holomorphic for Re$(s)$ large and extends meromorphically to $\C$ with at most simple poles. \\
\item If $D$ is of order one with the leading symbol $\sigma_D(x,\xi), x \in K, \xi \in T^*_xK$ satisfying $$\sigma_D(x,\xi)^2=|\xi|^2\cdot I,$$ where $I$ is $\textup{rank}F\times \textup{rank}F$ identity matrix, and the boundary condition $B$ is of order zero, then the meromorphic extension of $\eta_{\theta}(s,D_B)$ is regular at $s=0$.
\end{enumerate}
\end{thm}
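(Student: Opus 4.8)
The plan is to reduce both statements to known results in the theory of elliptic boundary value problems, exactly as in the Seeley–Gilkey framework invoked throughout the paper. For part (i), I would first recall that strong ellipticity with respect to $\C\backslash\R^*$ in the sense of [Gi] packages together ellipticity of the interior symbol, the Shapiro–Lopatinski condition for the boundary operator $B$, and the Agmon condition along all rays in $\C\backslash\R^*$; this is precisely the setting in which Seeley [Se1, Se2] constructs the resolvent $(D_B-\lambda)^{-1}$ as a parameter-dependent pseudodifferential operator with a full asymptotic expansion in $\lambda$ as $|\lambda|\to\infty$ along such rays. From the existence of this parametrix one obtains that $D_B$ is Fredholm with discrete spectrum accumulating only at infinity and eigenvalues of finite algebraic multiplicity (this is already Theorem \ref{Freddy} (i)). Then, exactly as for the zeta-function, one writes the eta-function as a Mellin-type transform: for $\textup{Re}(s)$ large,
\begin{align*}
\eta_{\theta}(s,D_B)=\frac{1}{\Gamma\!\left(\tfrac{s+1}{2}\right)}\int_0^{\infty}t^{\frac{s-1}{2}}\,\textup{Tr}\!\left(D_B\, e^{-tD_B^2}\right)dt,
\end{align*}
after splitting off the zero eigenvalue and choosing the branch cut according to $\theta$. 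The heat-trace $\textup{Tr}(D_B e^{-tD_B^2})$ has a complete asymptotic expansion as $t\to 0^+$ in powers $t^{(k-\dim K)/\w}$ together with possible $\log t$ terms, by the Seeley calculus applied to $D_B^2$; feeding this expansion into the Mellin integral yields the meromorphic continuation of $\eta_{\theta}(s,D_B)$ to all of $\C$ with at most simple poles located at the usual arithmetic progression of points. The large-$t$ part of the integral is entire because the nonzero spectrum is bounded away from zero.

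For part (ii) the point is that, under the stated hypotheses — $D$ is first order with $\sigma_D(x,\xi)^2=|\xi|^2 I$, so that $D^2$ has scalar leading symbol $|\xi|^2 I$, and $B$ is of order zero — the local heat-coefficient that could contribute a pole of $\eta_{\theta}(s,D_B)$ at $s=0$ vanishes. Concretely, the residue of $\eta_{\theta}(\cdot,D_B)$ at $s=0$ is, up to a universal constant, the coefficient of $t^0$ in the small-time expansion of $\textup{Tr}(D_B e^{-tD_B^2})$, and this coefficient is the integral over $K$ of a local invariant $a(x)$ built from the jets of the symbol of $D$ together with a boundary integral over $\partial M$ of a local invariant built from the symbols of $D$ and $B$. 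The interior invariant $a(x)$ is the Gilkey invariant of an operator of Dirac type, and its vanishing for $\dim K$ odd (after the appropriate cancellation coming from the symbol relation $\sigma_D^2=|\xi|^2 I$) is Gilkey's theorem; the boundary contribution vanishes because $B$ has order zero, so the relevant boundary heat-coefficient has the wrong homogeneity to contribute at $t^0$. This is the invariance-theory computation carried out by Gilkey and Smith, and I would simply cite it: see [Gi] and the Gilkey–Smith reference for the eta-invariant of boundary value problems of Dirac type. Hence $s=0$ is a regular point of $\eta_{\theta}(s,D_B)$.

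The main obstacle — and the reason I would lean on citations rather than reproduce the argument — is part (ii): establishing that the potentially singular heat-coefficient at $t^0$ actually vanishes requires the full machinery of Gilkey's invariance theory for manifolds with boundary, including the precise identification of the admissible local invariants of the correct homogeneity, the functorial/dimensional-analysis arguments that kill the interior term in odd dimensions, and the separate verification that no boundary term of the required weight can be assembled from a zeroth-order $B$. None of this is a routine computation; it is exactly the content of [Gi, Section 1.11] and the Gilkey–Smith paper, which the excerpt has already signalled it will invoke. Part (i), by contrast, is entirely parallel to the zeta-function statement of Theorem \ref{Freddy} (ii) and follows the same Mellin-transform-plus-heat-expansion recipe, so it requires no new ideas beyond the Seeley parametrix construction for $D_B^2$.
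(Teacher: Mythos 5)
Your treatment of part (i) is aligned with the paper: the paper simply cites [GS1, Lemma 2.3 (c)] for holomorphy and [GS1, Theorem 2.7] for the meromorphic continuation, and your Mellin-transform/heat-expansion sketch is exactly the mechanism underlying those results, so there is no issue there.

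Part (ii), however, contains a genuine gap — and it is precisely the gap the paper warns about. You argue that regularity at $s=0$ follows because the residue is the integral of a local invariant $a(x)$ which vanishes by Gilkey's invariance theory, plus a boundary term that vanishes by a homogeneity count. This is a purely local argument, and the paper explicitly states (and this is well known) that ``the fact that $s=0$ is a regular point of the eta-function is highly non-trivial and cannot be proved by local arguments.'' The point is that the interior local invariant $a(x)$ does \emph{not} vanish pointwise for first-order operators in general — only its \emph{integral} vanishes — so there is no invariance-theoretic or dimensional-analysis computation that kills it at the level of the integrand. (This already distinguishes eta-regularity from, say, the vanishing of integer heat coefficients for $\triangle$ in odd dimensions, which \emph{is} a local parity argument.) Your appeal to ``$\dim K$ odd'' is also not available: the theorem as stated places no parity hypothesis on $\dim K$, only that $\w\cdot\mathrm{rank}\,F$ is even.

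What Gilkey and Smith actually do, and what the paper's proof sketch records, is quite different in mechanism: one first observes that the residue at $s=0$ \emph{is} given by a local formula, which makes it invariant under homotopies of $(D,B)$ within the strongly elliptic class; one then deforms $(D,B)$ to an operator with constant coefficients in a collar of $\partial K$, and for such an operator applies the closed-double-manifold argument, importing the known regularity of the eta-function in the boundaryless case. The locality of the residue is used only to justify the homotopy invariance; the final vanishing is obtained by reduction to the closed case, not by an invariance-theory computation of the integrand. That reduction is exactly where the hypotheses in part (ii) — $D$ first order, $\sigma_D(x,\xi)^2=|\xi|^2 I$, $B$ of order zero — enter: they ensure the deformation and doubling can be carried out. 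This is the content of [GS2, Lemma 2.3.4] and [GS2, Theorem 2.3.5], and your proof would need to route through those, not through a pointwise-vanishing argument.
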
 \ \\
\\[-7mm] The proof of the theorem follows from the results in [GS1] and [GS2] on the eta-function of strongly elliptic boundary value problems. The fact that $\eta_{\theta}(s,D_B)$ is holomorphic for Re$(s)$ sufficiently large is asserted in [GS1, Lemma 2.3 (c)]. The meromorphic continuation with at most isolated simple poles is asserted in [GS1, Theorem 2.7]. 
\\[3mm] The fact that $s=0$ is a regular point of the eta-function is highly non-trivial and cannot be proved by local arguments. Using homotopy invariance of the residue at zero for the eta-function, P. Gilkey and L. Smith [GS2] reduced the discussion to a certain class of operators with constant coefficients in the collar neighborhood of the boundary and applied the closed double manifold argument. The reduction works for differential operators of order one with 0-th order boundary conditions under the assumption on the leading symbol of the operator as in the second statement of the theorem. The regularity statement of Theorem \ref{eta-regular-original} follows directly from [GS2, Theorem 2.3.5] and [GS2, Lemma 2.3.4]. 

\begin{remark}
The definition of an eta-function, as in Theorem \ref{eta-regular-original} $(i)$, also applies to any operator $D$ with finite spectrum $\{\lambda_1,..,\lambda_n\}$ and finite respective multiplicities $\{m_1,..,m_n\}$. For a given Agmon angle $\theta \in [0,2\pi)$ the associated eta-function $$\eta_{\theta}(s,D):=\sum\limits_{\textup{Re}(\lambda)>0}m(\lambda)\cdot \lambda_{\theta}^{-s}-\sum\limits_{\textup{Re}(\lambda)<0}m(\lambda)\cdot(-\lambda)_{\theta}^{-s},$$ is holomorphic for all $s\in \C$, since the sum is finite and the zero-eigenvalue is excluded.
\end{remark}

\begin{prop}\label{eta-regular}
The eta-function $\eta_{\theta}(s,\B_{\textup{even}})$ associated to the even part $\B_{\textup{even}}$ of the odd-signature operator and its Agmon angle $\theta \in (-\pi,0)$, is holomorphic for Re$(s)$ large and extends meromorphically to $\C$ with $s=0$ being a regular point. 
\end{prop}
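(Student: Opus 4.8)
The plan is to reduce Proposition~\ref{eta-regular} to the already-established Theorem~\ref{eta-regular-original} by exhibiting $\B_{\textup{even}}$ as (a Fredholm-equivalent version of) a first-order strongly elliptic boundary value problem whose leading symbol squares to $|\xi|^2 I$ and whose boundary condition is of order zero. First I would recall from Lemma~\ref{odd-signature-laplacian} and Proposition~\ref{strongly-elliptic} that $\B$ (hence its even part $\B_{\textup{even}}$) is a bounded perturbation of the self-adjoint strongly elliptic operator $D=\GG(D^{GB}_{\textup{rel}}\oplus D'^{GB}_{\textup{abs}})$, that it shares the leading symbol of $D$, and that it comes with the same (zeroth-order) boundary conditions, namely the relative and absolute conditions restricted to the even part. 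Since $D^{GB}_{\textup{rel}}$ and $D'^{GB}_{\textup{abs}}$ are Gauss--Bonnet--type operators, their leading symbols are Clifford multiplications, so the leading symbol $\sigma_{\B}(x,\xi)=\sigma_D(x,\xi)$ satisfies $\sigma_{\B}(x,\xi)^2=|\xi|^2 I$; also $\w\cdot\textup{rank}F$ is even because $\w=1$ and the bundle $E\oplus E$ has even rank. Thus $\B_{\textup{even}}$ meets every hypothesis of Theorem~\ref{eta-regular-original}(i) and (ii), except that it is not self-adjoint when $h^E$ is not flat.

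The genuine issue, therefore, is that Theorem~\ref{eta-regular-original} as I would apply it is stated for the operator itself, and $\B_{\textup{even}}$ is only a zeroth-order perturbation of a nice operator, not that operator. I would address this in two ways. First, the holomorphy for $\textup{Re}(s)\gg 0$ and the meromorphic continuation with at worst simple poles are robust under zeroth-order (in fact relatively bounded) perturbations: the resolvent parametrix construction of Seeley, and the Gilkey--Smith analysis in [GS1], produce a full symbolic expansion of the resolvent of $\B_{\textup{even}}$ exactly as for $D$, because the subleading terms of the symbol do not affect the structure of the heat/resolvent trace expansion, only the coefficients. Hence $\eta_{\theta}(s,\B_{\textup{even}})$ admits the claimed meromorphic extension. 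Second, and more delicately, for the regularity at $s=0$ one cannot merely invoke perturbation stability of a pole, since $\eta_{\theta}(s,D)$ already has residue zero at $s=0$ and one must show the perturbation does not create a pole. Here I would use the homotopy-invariance of $\textup{Res}_{s=0}\eta_{\theta}(s,\cdot)$ that underlies [GS2]: connect $\B_{\textup{even}}$ to $D_{\textup{even}}$ through the linear path $\B_t:=D_{\textup{even}}+t(\B_{\textup{even}}-D_{\textup{even}})$, $t\in[0,1]$, all of whose members are strongly elliptic first-order operators with the same leading symbol and the same zeroth-order boundary condition; the residue at zero of the eta function is a locally computable quantity that depends only on the leading symbol data near the boundary and on finitely many symbol homogeneities, and is therefore constant along the path, so it equals its value at $t=0$, which is $0$ by Theorem~\ref{eta-regular-original}(ii) applied to the self-adjoint operator $D_{\textup{even}}$.

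Concretely the steps, in order, would be: (1) identify $\B_{\textup{even}}$ with a first-order strongly elliptic boundary value problem $(\B,B)$ on $K=M$, $F=\Lambda^{\textup{even}}T^*M\otimes(E\oplus E)$, with boundary condition $B$ the even part of relative $\oplus$ absolute conditions, verifying the order and parity hypotheses and the symbol identity $\sigma(x,\xi)^2=|\xi|^2 I$; (2) quote Theorem~\ref{Freddy} and Proposition~\ref{strongly-elliptic} to get discreteness, Fredholmness, and existence of the Agmon angle $\theta\in(-\pi,0)$ with $\textup{Spec}(\B)\cap R_{\theta}$ finite, so $\eta_{\theta}(s,\B_{\textup{even}})$ is well defined for $\textup{Re}(s)\gg0$; (3) invoke the resolvent expansion of [GS1] — valid verbatim because the relevant hypotheses concern only the leading symbol and the boundary condition order, which $\B_{\textup{even}}$ shares with $D$ — to obtain holomorphy for large $\textup{Re}(s)$ and the meromorphic continuation with only simple poles; (4) prove regularity at $s=0$ via the homotopy $\B_t$ above, using that $\textup{Res}_{s=0}\eta_{\theta}(s,\B_t)$ is a local invariant of the leading-symbol/boundary data and hence independent of $t$, together with [GS2, Theorem 2.3.5] and [GS2, Lemma 2.3.4] at $t=0$; conclude that $\textup{Res}_{s=0}\eta_{\theta}(s,\B_{\textup{even}})=0$.

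The main obstacle I anticipate is step~(4): one must be careful that the homotopy $\B_t$ stays inside the class for which the Gilkey--Smith regularity argument (double-manifold reduction, reduction to constant coefficients in a collar) applies, i.e. that strong ellipticity with respect to $\C\setminus\R^*$ persists along the whole path and that the boundary condition remains of order zero and unchanged; granting this — which holds here because the leading symbol and the boundary operator are literally $t$-independent, only the zeroth-order interior term varies — the local nature of the residue at zero does the rest. A secondary, more bookkeeping-type point is to make sure that passing to the even part $\B_{\textup{even}}$ (rather than working with all forms) does not spoil the symbol identity $\sigma^2=|\xi|^2 I$; this is fine because Clifford multiplication by $\xi$ interchanges even and odd forms and so its square on even forms is still $|\xi|^2$ times the identity on the even part.
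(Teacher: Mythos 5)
Your plan follows the paper's approach in outline: reduce the proposition to the Gilkey--Smith result encoded in Theorem~\ref{eta-regular-original}, using Proposition~\ref{strongly-elliptic} for strong ellipticity and a computation of the leading symbol to verify $\sigma_{\B}(x,\xi)^2=|\xi|^2 I$. However, you have built in an unnecessary detour. You state that $\B_{\textup{even}}$ ``meets every hypothesis of Theorem~\ref{eta-regular-original}(i) and (ii), except that it is not self-adjoint when $h^E$ is not flat,'' and you then construct a homotopy $\B_t$ linking $\B_{\textup{even}}$ to the self-adjoint operator $D_{\textup{even}}$ in order to transfer the vanishing of $\textup{Res}_{s=0}\eta_{\theta}$. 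But Theorem~\ref{eta-regular-original} contains no self-adjointness hypothesis whatsoever: its assumptions are only strong ellipticity of the boundary value problem with respect to $\C\setminus\R^*$, first order, the symbol identity $\sigma^2=|\xi|^2 I$, and a zeroth-order boundary condition. Proposition~\ref{strongly-elliptic} asserts precisely that $\B$, and hence $\B_{\textup{even}}$, is such a strongly elliptic boundary value problem, so Theorem~\ref{eta-regular-original}(i) gives the holomorphy for $\textup{Re}(s)\gg 0$ and the meromorphic extension with simple poles directly, and Theorem~\ref{eta-regular-original}(ii) gives regularity at $s=0$ directly once the symbol identity is checked (the paper points to [GS2, Example 2.2.4] for exactly this verification). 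Your step~(4) is therefore redundant: it essentially re-derives the homotopy-invariance mechanism (reduction to the double, local invariance of the residue) that is internal to [GS2, Theorem~2.3.5] and already packaged inside the cited Theorem~\ref{eta-regular-original}(ii). The rest of your proposal --- the parity observation $\w\cdot\textup{rank}(E\oplus E)$ even, the identification of the boundary condition as zeroth order, and the check that passing to the even part preserves the squared-symbol identity --- is correct and consonant with what the paper does.
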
\ \\
\\[-7mm] The statement of the proposition on the meromorphic extension of the eta-function is a direct consequence of Theorem \ref{eta-regular-original} (i) and Proposition \ref{strongly-elliptic}. The regularity statement follows from Theorem \ref{eta-regular-original} (ii) and an explicit computation of the leading symbol of the odd-signature operator, compare also [GS2, Example 2.2.4].
\\[3mm] Using Proposition \ref{eta-regular} we can define the eta-invariant in the manner of [BK2] for $\B_{\textup{even}}$:
\begin{align}\label{eta-BK}
\eta(\B_{\textup{even}}):=\frac{1}{2}\left(\eta_{\theta}(s=0,\B_{\textup{even}})+m_+-m_-+m_0\right),
\end{align}
where $m_{\pm}$ is the number of $\B_{\textup{even}}-$eigenvalues on the positive, respectively the negative part of the imaginary axis and $m_0$ is the dimension of the generalized zero-eigenspace of $\B_{\textup{even}}$.
\\[3mm] Implicit in the notation is also the fact, that $\eta(\B_{\textup{even}})$ does not depend on the Agmon angle $\theta \in (-\pi, 0)$. This is due to the fact that, given a different Agmon angle $\theta'\in (-\pi, 0)$, there are by \eqref{spectral-cut} and discreteness of $\B$ only finitely many eigenvalues of $\B_{\textup{even}}$ in the acute angle between $\theta$ and $\theta'$. 
\\[3mm] Similarly we define the eta-invariants of $\B^{(\lambda, \infty)}_{\textup{even}}$ and $\B^{[0,\lambda]}_{\textup{even}}$ and in particular we get 
$$\eta(\B_{\textup{even}})=\eta(\B^{(\lambda, \infty)}_{\textup{even}})+ \eta(\B^{[0,\lambda]}_{\textup{even}}).$$
Before we prove the next central result, let us make the following observation.
\\[3mm] Consider the imaginary axis $i\R\subset \C$. By \eqref{spectral-cut} there are only finitely many eigenvalues of $\B$ on $i\R$. Further by the discreteness of $\B$ small rotation of the imaginary axis does not hit any further eigenvalue of $\B$ and in particular of $\B^{(\lambda,\infty)}_{\textup{even}}, \lambda \geq 0$. More precisely this means that there exists an $\epsilon > 0$ sufficiently small such that the angle $$\theta:=-\frac{\pi}{2}+\epsilon $$ is an Agmon angle for $\B^{(\lambda,\infty)}_{\textup{even}}$ and the solid angles 
\begin{align*}
L_{(-\pi /2, \theta]}&:=\{z \in \C | z=|z|\cdot e^{i\phi}, \phi \in (-\pi /2, \theta]\}, \\
L_{(\pi /2, \theta+\pi]}&:=\{z \in \C | z=|z|\cdot e^{i\phi}, \phi \in (\pi /2, \theta+\pi]\}
\end{align*}
do not contain eigenvalues of $\B^{(\lambda,\infty)}_{\textup{even}}$. With this observation we can state the following central result:

\begin{thm}\label{log-det-gr}
Let $\theta \in (-\pi /2 , 0)$ be an Agmon angle for $\B^{(\lambda, \infty)}_{\textup{even}}$ such that there are no eigenvalues of $\B^{(\lambda, \infty)}_{\textup{even}}$ in the solid angles $L_{(-\pi /2, \theta]}$ and $L_{(-\pi /2 , \theta + \pi]}$. Then $2\theta$ is an Agmon angle for $(\B^{(\lambda,\infty)}_{\textup{even}})^2$. Then the graded zeta-function $\zeta_{gr,\theta}(s, \B^{(\lambda, \infty)}_{\textup{even}}), Re(s)\gg0$ extends meromorphically to $\C$ and is regular at $s=0$ with the following derivative at zero:
\begin{align*} \left.\frac{d}{ds}\right|_{s=0}\zeta_{gr,\theta}(s,\B^{(\lambda,\infty)}_{\textup{even}}))=\frac{1}{2}\sum_{k=0}^m(-1)^{k+1}\cdot k\cdot \left.\frac{d}{ds}\right|_{s=0}\zeta_{2\theta}(s, \B^2\restriction \domm^k_{(\lambda, \infty)}) + \\ + \frac{i \pi}{2}\sum_{k=0}^m(-1)^k\cdot k\cdot \zeta_{2\theta}(0, \B^2\restriction \domm^k_{(\lambda, \infty)}) + i\pi \eta(\B^{(\lambda, \infty)}_{\textup{even}}).
\end{align*}
\end{thm}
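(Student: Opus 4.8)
The plan is to reduce the statement about the graded zeta-function of the (non-self-adjoint) odd-signature operator to statements about the zeta-functions of its square $\B^2$, which are controlled by Theorem \ref{Freddy}, Proposition \ref{strongly-elliptic}, and the eta-invariant machinery of Proposition \ref{eta-regular}. First I would exploit the decomposition \eqref{hilbert-decomposition}, which splits $\textup{Im}(1-\Pi_{\B^2,[0,\lambda]})$ into $\ker(\DD_{\mathcal{I}}\GG_{\mathcal{I}})\oplus\ker(\GG_{\mathcal{I}}\DD_{\mathcal{I}})$, and observe that $\B^{(\lambda,\infty)}$ preserves this splitting, so that $\B^{(\lambda,\infty)}_{\textup{even}}=\B^{+,(\lambda,\infty)}_{\textup{even}}\oplus\B^{-,(\lambda,\infty)}_{\textup{even}}$ as introduced just before the theorem. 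On $\ker(\GG_{\mathcal{I}}\DD_{\mathcal{I}})$ one has $\B^{(\lambda,\infty)}=\GG_{\mathcal{I}}\DD_{\mathcal{I}}$, and on $\ker(\DD_{\mathcal{I}}\GG_{\mathcal{I}})$ one has $\B^{(\lambda,\infty)}=\DD_{\mathcal{I}}\GG_{\mathcal{I}}$; in either case $(\B^{(\lambda,\infty)})^2$ restricted to that summand is the corresponding piece of $\B^2\restriction\domm_{(\lambda,\infty)}$. The point is that the chirality operator $\GG_{\mathcal{I}}$ intertwines $\B^{+,(\lambda,\infty)}$ on $\domm^k$ with $\B^{-,(\lambda,\infty)}$ on $\domm^{m-k}$ (up to sign), so that summing the graded zeta contributions over degrees reorganizes neatly into a sum over $\B^2\restriction\domm^k_{(\lambda,\infty)}$ weighted by $k$.

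Next I would invoke the elementary identity relating the zeta-function of a (possibly non-self-adjoint) operator $A$ with Agmon angle $\theta$ to the zeta-function of $A^2$ with Agmon angle $2\theta$: writing $\lambda_\theta^{-s}$ in terms of $(\lambda^2)_{2\theta}^{-s/2}$ one gets a correction involving the count of eigenvalues of $A$ in suitable half-planes, which is precisely what produces the $\frac{i\pi}{2}\sum(-1)^k k\,\zeta_{2\theta}(0,\cdot)$ term and, after the bookkeeping of how many eigenvalues of $\B^{(\lambda,\infty)}_{\textup{even}}$ lie above versus below the real axis, the $i\pi\eta(\B^{(\lambda,\infty)}_{\textup{even}})$ term. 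This is exactly the computation carried out in [BK2] in the closed case; the hypothesis that $\theta\in(-\pi/2,0)$ with no eigenvalues of $\B^{(\lambda,\infty)}_{\textup{even}}$ in $L_{(-\pi/2,\theta]}$ and $L_{(-\pi/2,\theta+\pi]}$ guarantees that $2\theta\in(-\pi,0)$ is a genuine Agmon angle for $(\B^{(\lambda,\infty)}_{\textup{even}})^2$ and that the branch choices in $\log_\theta$ versus $\log_{2\theta}$ are consistent, so that the derivative-at-zero and value-at-zero formulas hold without ambiguity modulo $2\pi i$. The meromorphic extension and regularity at $s=0$ of $\zeta_{gr,\theta}(s,\B^{(\lambda,\infty)}_{\textup{even}})$ then follow from the corresponding properties of $\zeta_{2\theta}(s,\B^2\restriction\domm^k_{(\lambda,\infty)})$ (Theorem \ref{Freddy}, applied to the strongly elliptic $\B^2$ of Proposition \ref{strongly-elliptic}, noting $s=0$ is regular) together with the regularity of the eta-function at zero (Proposition \ref{eta-regular}); note $\B^2\restriction\domm^k_{(\lambda,\infty)}$ differs from an operator with no zero eigenvalue, so its zeta-function is built from a genuine complement of the low-lying spectrum.

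The main obstacle I anticipate is the careful eigenvalue bookkeeping in the passage from $\B^{(\lambda,\infty)}_{\textup{even}}$ to its square: because $\B$ is not self-adjoint (the Hermitian metric $h^E$ need not be $\D$-flat), its spectrum is only known to be discrete and to lie, away from finitely many exceptions, near the real axis by \eqref{spectral-cut} and the Neumann-series/resolvent-asymptotics argument used in Proposition \ref{rho-element}; one must track generalized eigenspaces rather than eigenvalues and verify that the algebraic multiplicities behave additively under the splitting $\B^{(\lambda,\infty)}_{\textup{even}}=\B^{+}\oplus\B^{-}$ and under $\GG$. The definition \eqref{eta-BK} of $\eta(\B^{(\lambda,\infty)}_{\textup{even}})$, which already incorporates $m_+,m_-,m_0$, is designed precisely to absorb these boundary eigenvalue contributions, so the strategy is to show that the spurious terms arising from branch-cut shifts assemble exactly into $m_+-m_-+m_0$ plus the analytic continuation value $\eta_\theta(0,\cdot)$, reproducing $\pi$ times the full eta-invariant. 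Once this accounting is done, combining it with the degree-weighted sum over the $\GG$-paired summands yields the displayed formula; this mirrors [BK2, Proposition 5.11 and its proof] and the only genuinely new input is that all operators here carry relative/absolute boundary conditions, which is harmless since strong ellipticity (Proposition \ref{strongly-elliptic}) furnishes all the needed spectral and heat-trace properties uniformly.
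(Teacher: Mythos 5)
Your proposal follows essentially the same route as the paper's proof: decompose via \eqref{hilbert-decomposition}, invoke the Braverman--Kappeler identities relating $\zeta_\theta(s,\cdot)$ of a non-self-adjoint operator to $\zeta_{2\theta}(s/2,\cdot)$ of its square plus an eta-correction (the paper cites [BK1, (4.10), (4.11)]), and conclude meromorphy and regularity at $s=0$ from strong ellipticity of $\B^2$ and Proposition \ref{eta-regular}. One small caveat: you reorganize the sum using the $\GG$-induced pairing of degrees $k\leftrightarrow m-k$, but that relation alone (namely $\zeta_{2\theta}(s,(\B^{+,\mathcal{I}})^2\restriction\domm^k)=\zeta_{2\theta}(s,(\B^{-,\mathcal{I}})^2\restriction\domm^{m-k})$) does not by itself determine $\zeta_{2\theta}(s,(\B^{+,\mathcal{I}}_{\textup{even}})^2)-\zeta_{2\theta}(s,(\B^{-,\mathcal{I}}_{\textup{even}})^2)$ as the weighted alternating sum $\sum_k(-1)^{k+1}k\,\zeta_{2\theta}(s,(\B^{\mathcal{I}})^2\restriction\domm^k)$; the paper instead establishes bijectivity of $\DD_{\mathcal{I}}\colon\ker(\DD_{\mathcal{I}}\GG_{\mathcal{I}})\to\ker(\GG_{\mathcal{I}}\DD_{\mathcal{I}})$ (eq.~\eqref{bijective-two}), yielding the degree-shift relation \eqref{zwei} which telescopes cleanly, and you would need this (or its combination with the $\GG$-symmetry) to pin down the coefficients. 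Also note a minor swap in your text: $\B^{(\lambda,\infty)}$ acts as $\GG_{\mathcal{I}}\DD_{\mathcal{I}}$ on $\ker(\DD_{\mathcal{I}}\GG_{\mathcal{I}})$ and as $\DD_{\mathcal{I}}\GG_{\mathcal{I}}$ on $\ker(\GG_{\mathcal{I}}\DD_{\mathcal{I}})$, the opposite of what you wrote.
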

\begin{proof} For $Re(s)\gg0$ the general identities [BK1 (4.10), (4.11)] imply the following relation between holomorphic functions:
\begin{align*} \zeta_{gr,\theta}(s,\B^{(\lambda,\infty)}_{\textup{even}}))=\frac{1+e^{-i\pi s}}{2}\left[
\zeta_{2\theta}\left(\frac{s}{2}, \left(\B^{+,(\lambda, \infty)}_{\textup{even}}\right)^2\right)- \zeta_{2\theta}\left(\frac{s}{2}, \left(\B^{-,(\lambda, \infty)}_{\textup{even}}\right)^2\right)\right] + \\ +\frac{1}{2}(1-e^{-i\pi s}) \left[\eta(s, \B^{(\lambda, \infty)}_{\textup{even}})+f(s)\right],
\end{align*}
where $f(s)$ is a holomorphic function (combination of zeta-functions associated to finite-dimensional operators) with $$f(0)=m_+(\B^{(\lambda, \infty)}_{\textup{even}})-m_-(\B^{(\lambda, \infty)}_{\textup{even}}),$$ where $m_{\pm}(\cdot)$ denotes the number of eigenvalues of the operator in brackets, lying on the positive, respectively the negative part of the imaginary axis.
\\[3mm] Put $\mathcal{I}=(\lambda, \infty)$ to simplify notation. Recall \eqref{image2} and show that
\begin{align}\label{bijective-two}
\DD_{\mathcal{I}}: \textup{ker}(\DD_{\mathcal{I}}\GG_{\mathcal{I}})\rightarrow \textup{ker}(\GG_{\mathcal{I}}\DD_{\mathcal{I}})= \textup{Im}(\DD_{\mathcal{I}}\GG_{\mathcal{I}})
\end{align}
is bijective. Indeed, injectivity is clear by \eqref{kern}. For surjectivity let $x=\DD_{\mathcal{I}}\GG_{\mathcal{I}}v\in \textup{Im}(\DD_{\mathcal{I}}\GG_{\mathcal{I}})$ with (recall \eqref{hilbert-decomposition}) $$v=v'\oplus v'' \in \textup{Im}(\DD_{\mathcal{I}}\GG_{\mathcal{I}}) \oplus \textup{Im}(\GG_{\mathcal{I}}\DD_{\mathcal{I}})=\textup{Im}(1-\Pi_{\B^2,[0,\lambda]}).$$
In particular $v''\in \textup{Im}(\GG_{\mathcal{I}}\DD_{\mathcal{I}})=\ker \DD_{\mathcal{I}}\GG_{\mathcal{I}}$ and $v'=\DD_{\mathcal{I}}\GG_{\mathcal{I}} \w$ for some $\w$. Hence we obtain 

\begin{align*}
x=\DD_{\mathcal{I}}\GG_{\mathcal{I}}v=\DD_{\mathcal{I}}\GG_{\mathcal{I}}v'=\DD_{\mathcal{I}}\GG_{\mathcal{I}}\DD_{\mathcal{I}}\GG_{\mathcal{I}}\w, \\
\textup{and} \quad \GG_{\mathcal{I}}\DD_{\mathcal{I}}\GG_{\mathcal{I}}\w\in \ker\DD_{\mathcal{I}}\GG_{\mathcal{I}}. 
\end{align*}
In other words we have found a preimage of any $x\in \textup{Im}(\DD_{\mathcal{I}}\GG_{\mathcal{I}})$ under $\DD_{\mathcal{I}}$. This proves bijectivity of the map in \eqref{bijective-two} and consequently, since $\DD_{\mathcal{I}}$ commutes with $\B^{\mathcal{I}}$ and $(\B^{\mathcal{I}})^2$, we obtain in any degree $k=0,..,m$
\begin{align}\label{zwei}
\zeta_{2\theta}(s,(\B^{+,\mathcal{I}})^2 \restriction \domm^k)=\zeta_{2\theta}(s,(\B^{-, \mathcal{I}})^2 \restriction \domm^{k+1}).
\end{align}
Using this relation we compute straightforwardly for $Re(s)$ sufficiently large:
\begin{align}
\zeta_{2 \theta}(s,(\B^{+,\mathcal{I}}_{\textup{even}})^2)-\zeta_{2 \theta}(s,(\B^{-,\mathcal{I}}_{\textup{even}})^2)=\sum_{k=0}^m (-1)^{k+1} \cdot k\cdot \zeta_{2 \theta}(s,(\B^{\mathcal{I}})^2 \restriction \domm^k).
\end{align}
We arive at the following preliminary result for $Re(s)\gg0$ 
\begin{align}\label{graded-formula}
\zeta_{gr,\theta}(s,\B^{\mathcal{I}}_{\textup{even}}))=\frac{1}{2}(1+e^{-i\pi s})\sum_{k=0}^m(-1)^{k+1}\cdot k\cdot \zeta_{2\theta}(s,(\B^{\mathcal{I}})^2\restriction \domm^k) + \\ +\frac{1}{2}(1-e^{-i\pi s}) \left[\eta(s,\B^{\mathcal{I}}_{\textup{even}})+f(s)\right]. \nonumber
\end{align}
We find with Theorem \ref{Freddy} and Proposition \ref{eta-regular} that the right hand side of the equality above is a meromorphic function on the entire complex plane and is regular at $s=0$. Hence the left hand side of the equality, the graded zeta-function, is meromorphic on $\C$ and regular at $s=0$, as claimed and as anticipated in Definition \ref{graded-determinant}. Computing the derivative at zero, we obtain the statement of the theorem.
\end{proof}\ \\
\\[-7mm] As a consequence of the theorem above, we obtain for the element $\rho(\D, g^M,h^E)$ defined in Proposition \ref{rho-element} the following relation
\begin{align}\label{drei}
\rho(\D, g^M,h^E)&=e^{\xi_{\lambda}(\D, g^M)}e^{-i\pi \xi'_{\lambda}(\D, g^M)}e^{-i\pi \eta(\B^{(\lambda, \infty)}_{\textup{even}}(g^M))}\cdot \rho_{[0,\lambda]}, \\ \label{xi1}
\xi_{\lambda}(\D, g^M)&=\frac{1}{2}\sum_{k=0}^m(-1)^{k}\cdot k\cdot \left. \frac{d}{ds}\right|_{s=0}\zeta_{2\theta}(s,(\B^2\restriction \domm^k_{(\lambda, \infty)})) \\  \label{xi2} \xi'_{\lambda}(\D, g^M)&=\frac{1}{2}\sum_{k=0}^m(-1)^{k}\cdot k\cdot \zeta_{2\theta}(s=0,(\B^2\restriction \domm^k_{(\lambda, \infty)})). 
\end{align}
Now we can identify explicitly the metric dependence of $\rho(\D, g^M,h^E)$ using the formula \eqref{drei}.
\\[3mm] First note that the construction is in fact independent of the choice of a Hermitian metric $h^E$. Indeed, a variation of $h^E$ does not change the odd-signature operator $\B$ as a differential operator. However it enters a priori the definition of $\dom (\B)$, since $h^E$ defines the $L^2-$Hilbert space.
\\[3mm] Recall that different Hermitian metrics give rise to equivalent $L^2-$norms over compact manifolds. Hence a posteriori the domain $\dom (\B)$ is indeed independent of the particular choice of $h^E$. 
\\[3mm] Independence of the choice of a Hermitian metric $h^E$ is essential, since for non-unitary flat vector bundles there is no canonical choice of $h^E$ and Hermitian metric is fixed arbitrarily. 
\\[3mm] Consider a smooth family $g^M(t), t\in \R$ of Riemannian metrics on $M$. Denote by $\GG_t$ the corresponding chirality operator in the sence of Definition \ref{chirality} and denote the associated refined torsion (recall \eqref{finite-torsion}) of the complex $(\domm_{t,[0,\lambda]},\DD_{t,[0,\lambda]})$ by $\rho_{t,[0,\lambda]}$. 
\\[3mm] Let $\B(t)=\B(\D, g^M(t))$ be the odd-signature operator corresponding to the Riemannian metric $g^M(t)$. Fix $t_0\in \R$ and choose $\lambda \geq 0$ such that there are no eigenvalues of $\B(t_0)^2$ of absolute value $\lambda$. Then there exists $\delta>0$ small enough such that the same holds for the spectrum of $\B(t)^2$ for $|t-t_0|<\delta$. Under this setup we obtain:
\begin{prop}\label{anomaly1}
Let the family $g^M(t)$ vary only in a compact subset of the interior of $M$. Then
$\exp(\xi_{\lambda}(\D, g^M(t)))\cdot \rho_{t,[0,\lambda]}$ is independent of $t\in (t_0-\delta,t_0+\delta)$.
\end{prop}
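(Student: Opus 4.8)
\emph{Proof idea.} The only $t$-dependent ingredient is the chirality operator $\GG_t$, built from the Hodge-star of $g^M(t)$; since $g^M(t)$ is constant near $\Y$, so is $\GG_t$, the operator $\DD$ and the boundary conditions defining $\domr,\doma,\domm$ do not move, and the de Rham identification of Theorem~\ref{thm41} is $t$-independent. Hence $\textup{Det}(H^*(\domm,\DD))$ is canonically a fixed line, so it makes sense to form the quotient
$$\varphi(t):=\frac{\exp(\xi_{\lambda}(\D,g^M(t)))\cdot\rho_{t,[0,\lambda]}}{\exp(\xi_{\lambda}(\D,g^M(t_0)))\cdot\rho_{t_0,[0,\lambda]}}\in\C\setminus\{0\},\qquad |t-t_0|<\delta,$$
and it suffices to show $\frac{d}{dt}\log\varphi(t)\equiv0$. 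Differentiating $\GG_t^2=\one$ shows that $\A_t:=\GG_t\dot\GG_t$ is a bundle endomorphism (a zeroth-order operator), anticommuting with $\GG_t$, and supported in the compact subset of the interior where $g^M(t)$ varies.

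\textbf{Variation of the finite-dimensional factor.} Since $\GG_t$ commutes with $\B^2(t)$, one has $\GG_{t,[0,\lambda]}=\GG_t\!\restriction\textup{Image}\,\Pi_{\B^2(t),[0,\lambda]}$, and $\DD_{t,[0,\lambda]}$ is the unchanging operator $\DD$ restricted to the same finite-dimensional subspace. Using the purely algebraic variation formula for the refined torsion of a finite complex under a change of its chirality operator (the analogue in our setting of the discussion in [BK2]), I would express $\frac{d}{dt}\log(\rho_{t,[0,\lambda]}/\rho_{t_0,[0,\lambda]})$, under the identification above, as an alternating weighted sum over $k$ of traces of $\A_t$ on the spaces $\domm^k_{t,[0,\lambda]}$, plus a term coming from $\frac{d}{dt}\Pi_{\B^2(t),[0,\lambda]}$. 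The latter is the standard moving-spectral-subspace term; by the choice of $\lambda$ it involves only $\B(t),\B^2(t)$ and their resolvents, and it is precisely what will cancel the corresponding contribution produced by $\xi_{\lambda}$.

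\textbf{Variation of $\xi_{\lambda}$.} By \eqref{xi1} and $\left.\frac{d}{ds}\right|_{s=0}\zeta(s,A)=-\log\det A$ we have $\xi_{\lambda}(\D,g^M(t))=-\frac12\sum_{k=0}^m(-1)^kk\,\log\det\nolimits_{2\theta}(\B^2(t)\restriction\domm^k_{(\lambda,\infty)})$, so I would apply the standard variation formula for zeta-regularized determinants of strongly elliptic operators. Writing $\dot\B(t)=\dot\GG_t\DD+\DD\dot\GG_t$, and hence $\dot\B^2(t)$, explicitly in terms of $\A_t,\GG_t,\DD$, the variation of each $\log\det\nolimits_{2\theta}(\B^2(t)\restriction\domm^k_{(\lambda,\infty)})$ becomes, up to the spectral-projection term matched above, the finite part at $s=0$ of $\textup{Tr}[\dot\B^2(t)\,(\B^2(t))^{-1}(\B^2(t))^{-s}\restriction\domm^k_{(\lambda,\infty)}]$. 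Combining this with the trace part of the previous step, the commutators with the fixed operator $\DD$ drop out by cyclicity of the (regularized) trace, and one is left with the finite part at $s=0$ of a heat-type trace over the full complex $\domm^k$. By the local theory of heat and resolvent expansions for strongly elliptic boundary value problems [Gi, Se1, Se2], this finite part equals an integral over $M$ of a universal local density built from $\A_t$ and the symbol of $\B^2(t)$, plus an integral over $\Y$ of a boundary density. The boundary integral does not change with $t$, because $\A_t$ and the whole $t$-variation of the geometry are supported away from $\Y$; and the alternating weighted sum over $k$ of the interior densities is exactly the local expression entering the classical Ray--Singer metric variation of analytic torsion, which vanishes identically because $m=\dim M$ is odd.

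\textbf{Conclusion and main difficulty.} Putting the two computations together, the spectral-projection terms cancel by construction, the $\DD$-commutator terms cancel by cyclicity, and the residual interior anomaly vanishes by parity of $m$; hence $\frac{d}{dt}\log\varphi(t)=0$ for $|t-t_0|<\delta$, which proves the statement. I expect the main obstacle to be precisely the bookkeeping in the first two steps — reproducing, in the present boundary-value and (for non-$\D$-flat $h^E$) non-self-adjoint setting, the exact cancellation between the variation of the finite-dimensional refined torsion $\rho_{[0,\lambda]}$ and that of the graded-determinant piece $\xi_{\lambda}$ that Braverman and Kappeler established on closed manifolds — whereas the vanishing of the surviving local term is a soft consequence of $\dim M$ being odd.
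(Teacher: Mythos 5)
Your proof reconstructs in detail the variation argument of [BK2, Lemma 9.2] — computing $\frac{d}{dt}\log$ of the product, matching the anomaly of the finite-dimensional refined torsion against the variation of $\xi_{\lambda}$, and observing that the surviving local densities are fixed near $\partial M$ because $\dot g^M(t)$ is supported in the interior — which is precisely the approach the paper takes, except that the paper compresses the whole computation into the single observation that the arguments of [BK2, Lemma 9.2] are local and therefore transfer verbatim when the metric varies only away from $\Y$. Your version is a correct and considerably more explicit rendering of the same argument, and the cancellations you describe (spectral-projection terms, $\DD$-commutators by cyclicity, residual interior density by oddness of $\dim M$) are exactly the ones underlying the cited lemma.
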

\begin{proof}
The arguments of [BK2, Lemma 9.2] are of local nature and transfer ad verbatim to the present situation for metric variations in the interior of the manifold. Hence the assertion follows for Riemannian metric remaining fixed in an open neighborhood of the boundary.
\end{proof}
\begin{prop}\label{anomaly2}
Denote the trivial connection on the trivial line bundle $M\times \C$ by $\D_{\textup{trivial}}$. Consider the even part of the associated odd-signature operator (recall Definition \ref{odd-signature})
$$\B_{\textup{trivial}}=\B_{\textup{even}}(\D_{\textup{trivial}}).$$ Indicate the metric dependence by $\B_{\textup{trivial}}(t):=\B_{\textup{trivial}}(g^M)$. Then $$\eta(\B^{(\lambda,\infty)}_{\textup{even}}(t))-\textup{rank}(E)\eta(\B_{\textup{trivial}}(t))\quad \textup{mod} \, \Z$$ is independent of $t\in (t_0-\delta,t_0+\delta)$.
\end{prop}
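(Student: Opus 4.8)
The plan is to split off the finite-dimensional part and then apply a variation formula for the eta-invariant. Recall from the text that $\eta(\B_{\textup{even}}(t)) = \eta(\B^{(\lambda,\infty)}_{\textup{even}}(t)) + \eta(\B^{[0,\lambda]}_{\textup{even}}(t))$, so for $t\in(t_0-\delta,t_0+\delta)$ it is enough to show: (a) $\eta(\B^{[0,\lambda]}_{\textup{even}}(t))$ is constant modulo $\Z$; and (b) $\eta(\B_{\textup{even}}(t))-\textup{rank}(E)\,\eta(\B_{\textup{trivial}}(t))$ is constant modulo $\Z$.

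For (a) I would argue finite-dimensionally. On the chosen interval $(\domm_{t,[0,\lambda]},\DD_{t,[0,\lambda]})$ is a smooth family of finite-dimensional complexes and $\B^{[0,\lambda]}_{\textup{even}}(t)$ a smooth family of operators on finite-dimensional spaces. By the remark following Theorem \ref{eta-regular-original} the value $\eta_\theta(s=0,\B^{[0,\lambda]}_{\textup{even}}(t))$ equals the integer given by the number of eigenvalues with positive real part minus the number with negative real part, so $\eta(\B^{[0,\lambda]}_{\textup{even}}(t))\in\tfrac12\Z$ by \eqref{eta-BK}. As $t$ varies this quantity changes only when an eigenvalue crosses the imaginary axis, and a direct inspection of the correction term $m_+-m_-+m_0$ shows that each such crossing changes it by an integer; moreover $m_0$, the dimension of the generalized zero-eigenspace of $\B_{\textup{even}}(t)$, equals the total dimension of the even-degree cohomology of the Fredholm complex $(\domm,\DD)$ by Corollary \ref{cohomology} and Theorem \ref{thm41} and is therefore $t$-independent. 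Hence $\eta(\B^{[0,\lambda]}_{\textup{even}}(t))$ is constant modulo $\Z$.

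For (b) I would invoke the variation formula for the eta-invariant of a smooth family of strongly elliptic boundary value problems, in the non-self-adjoint framework of Gilkey and Smith ([GS1], [GS2]) already used for Proposition \ref{eta-regular}: modulo $\Z$ the map $t\mapsto\eta(\B_{\textup{even}}(t))$ is smooth — the correction term in \eqref{eta-BK} being precisely what absorbs the integer jumps of $\eta_\theta(s=0,\B_{\textup{even}})$ — and its derivative is obtained by integrating over $M$ and over $\partial M$ local densities which, by Gilkey's invariance theory, are universal expressions in the Riemann curvature of $g^M(t)$, the curvature $\D^2$ of the flat connection $\D$, the second fundamental form of $\partial M$, the infinitesimal metric variation and their covariant derivatives. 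Since, as noted in the text, neither $\B_{\textup{even}}$ as a differential operator nor its domain depends on the Hermitian metric $h^E$, no $h^E$-terms can enter these densities; and since $\D^2=0$, every term containing the bundle curvature drops out, so the interior and boundary densities are metric densities tensored with $\mathrm{id}_E$, whose fiberwise traces equal $\textup{rank}(E)$ times the corresponding densities for the trivial line bundle. Therefore $\frac{d}{dt}\eta(\B_{\textup{even}}(t))=\textup{rank}(E)\cdot\frac{d}{dt}\eta(\B_{\textup{trivial}}(t))$, so the difference in (b) has vanishing derivative and is continuous modulo $\Z$, hence constant modulo $\Z$ on the interval. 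Combining (a) and (b) gives the proposition.

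The main obstacle is (b): one must identify the correct non-self-adjoint eta-variation formula for boundary value problems and check that the odd-signature operator with its relative/absolute-type boundary condition satisfies its hypotheses — this uses the strong ellipticity of Proposition \ref{strongly-elliptic} and the leading-symbol analysis behind Proposition \ref{eta-regular} — and then justify carefully, via invariance theory, that the resulting interior and boundary densities involve the flat connection only through its (vanishing) curvature. The rest is finite-dimensional bookkeeping or a direct transcription of the corresponding discussion in [BK2].
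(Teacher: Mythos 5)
Your proof is correct and follows essentially the same strategy as the paper: split off the finite-rank $[0,\lambda]$ part, whose eta-invariant is $\equiv \tfrac12\dim\domm^{\textup{even}}_{[0,\lambda]}(t)\pmod\Z$ and hence locally constant by the choice of $\delta$, then reduce to $\B_{\textup{even}}(t)$ and invoke the locality of the Gilkey--Smith eta-variation formula [GS1, Theorem 2.8, Lemma 2.9] together with local triviality of the flat bundle. The one point worth noting is that the paper phrases step (b) directly as a local isomorphism of boundary value problems, $(P_E(t),Q_E(t))\cong(P_{\C}(t),Q_{\C}(t))\times\one^{\textup{rk}E}$ in a flat local frame, rather than going through the invariance-theory claim that the variation density is a universal polynomial in curvatures. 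The latter is the more delicate route, since a priori the local density in the Gilkey--Smith formula depends on jets of the total symbol (i.e.\ on connection coefficients, not just on $\D^2$), and one must argue that it is a gauge-invariant local expression before concluding it factors through the vanishing curvature; the paper's local-isomorphism argument sidesteps this entirely. Both routes give the same conclusion, and your version makes the underlying mechanism clear enough, but the direct local-isomorphism phrasing is the cleaner justification.
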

\begin{proof}
Indicate the dependence of $\domm^*_{[0,\lambda]}$ on $g^M(t)$ by $$\domm^k_{[0,\lambda]}(t):=\textup{Image}\, \Pi_{\B(t)^2, [0,\lambda]}\cap \domm^k.$$
Note first the by the choice of $\delta >0$ $$\dim \domm^k_{[0,\lambda]}(t)=\textup{const}, \quad t\in (t_0-\delta,t_0+\delta).$$ Since $\B^{[0,\lambda]}_{\textup{even}}(t)$ is finite-dimensional, we infer from the definition of the eta-invariant (cf. [BK2, (9.11)])
\begin{align}\label{konstantin}
\eta (\B^{[0,\lambda]}_{\textup{even}}(t))\equiv \frac{1}{2}\dim \domm^k_{[0,\lambda]}(t) \equiv \textup{const} \ \textup{mod} \, \Z, \quad t\in (t_0-\delta,t_0+\delta).
\end{align}
By construction 
$$\eta(\B_{\textup{even}}(t))=\eta(\B^{(\lambda,\infty)}_{\textup{even}}(t))+\eta(\B^{[0,\lambda]}_{\textup{even}}(t)).$$ Hence, in view of \eqref{konstantin}, it suffices (modulo $\Z$) to study the metric dependence of the eta-invariant of $\eta(\B_{\textup{even}}(t))$. 
\\[3mm] View $\B_{\textup{even}}(t)$ as a pair of a differential operator $P_E(t)$ with its boundary conditions $Q_E(t)$. Similarly view $\B_{\textup{trivial}}(t)$ as a pair $(P_{\C}(t), Q_{\C}(t))$. Note that by construction the pair $(P_E(t),Q_E(t))$ is locally isomorphic to $(P_{\C}(t), Q_{\C}(t))\times \one^k$, since the flat connection $\D$ is locally trivial in appropriate local trivializations.
\\[3mm] Since the variation of the eta-invariants is computed from the local information of the symbols (cf. [GS1, Theorem 2.8, Lemma 2.9]), we find that the difference
\begin{align*}
\eta(\B_{\textup{even}}(t))-\textup{rank}(E)\eta(\B_{\textup{trivial}}(t))=\\=\eta(P_E(t),Q_E(t))-\textup{rank}(E)\eta(P_{\C}(t), Q_{\C}(t))
\end{align*}
is independent of $t\in \R$ modulo $\Z$. The modulo $\Z$ reduction is needed to annihilate discontinuity jumps arising from eigenvalues crossing the imaginary axis. This proves the statement of the proposition.
\end{proof} 
\begin{prop}\label{anomaly3}
Let $\B(\D_{\textup{trivial}})$ denote the odd-signature operator (Definition \ref{odd-signature}) associated to the trivial line bundle $M\times \C$ with the trivial connection $\D_{\textup{trivial}}$. Consider in correspondence to \eqref{xi2} the expression 
\begin{align*}
\xi'(\D_{\textup{trivial}}, g^M(t))=\frac{1}{2}\sum_{k=0}^m(-1)^{k}\cdot k\cdot \zeta_{2\theta}(s=0,(\B(\D_{\textup{trivial}}, g^M(t))^2\restriction \domm^k).
\end{align*}
Then $$\xi'_{\lambda}(\D,g^M(t))-\textup{rank}(E)\cdot \xi'(\D_{\textup{trivial}},g^M(t))\quad \textup{mod}\ \Z$$ is independent of $t\in \R$.
\end{prop}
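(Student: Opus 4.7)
The plan follows Proposition \ref{anomaly2}, with the eta invariant replaced by the regular value $\zeta_{2\theta}(0,\cdot)$. Fix $t_0 \in \R$ and $\delta > 0$ so that $\lambda$ is disjoint from the absolute values of the spectrum of $\B(t)^2$ for $t \in (t_0-\delta, t_0+\delta)$. On this interval I would decompose
\begin{align*}
\zeta_{2\theta}(0, \B(t)^2\restriction \domm^k_{(\lambda, \infty)}) = \zeta_{2\theta}(0, \B(t)^2\restriction \domm^k) - \zeta_{2\theta}(0, \B(t)^2\restriction \domm^k_{[0, \lambda]}),
\end{align*}
where the last, finite-dimensional term equals $\dim \domm^k_{[0,\lambda]}(t) - \dim \ker (\B(t)^2\restriction \domm^k)$. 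By the choice of $\delta$, the total dimension $\dim \domm^k_{[0,\lambda]}(t)$ is constant in $t$, so $\xi'_\lambda(\D, g^M(t))$ is governed by the full zeta value together with a kernel-dimension correction.

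Next, the standard heat kernel asymptotics give
\begin{align*}
\zeta_{2\theta}(0, \B(\D)^2\restriction \domm^k) = A_m\bigl(\B(\D)^2\restriction \domm^k\bigr) - \dim \ker \bigl(\B(\D)^2\restriction \domm^k\bigr),
\end{align*}
where $A_m$ is the integrated local heat coefficient (with boundary contributions). Because $(E,\D)$ admits flat local trivializations in which $\D$ is $\D_{\textup{trivial}} \otimes \one_{\textup{rank}(E)}$, the symbol of $\B(\D)^2$ is pointwise $\textup{rank}(E)$ copies of the symbol of $\B(\D_{\textup{trivial}})^2$; hence $A_m(\B(\D)^2\restriction \domm^k) = \textup{rank}(E)\cdot A_m(\B(\D_{\textup{trivial}})^2\restriction \domm^k)$, and the same scaling holds for the boundary integrand. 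Applying $\tfrac{1}{2}\sum_k(-1)^k k$ and subtracting $\textup{rank}(E)\cdot\xi'(\D_{\textup{trivial}}, g^M(t))$, the local heat-coefficient contributions cancel identically. The difference then reduces to a rational combination of the integer-valued quantities $\dim \domm^k_{[0,\lambda]}(t)$, $\dim \ker(\B(\D)^2\restriction \domm^k)(t)$ and $\dim \ker(\B(\D_{\textup{trivial}})^2\restriction \domm^k)(t)$.

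To obtain constancy modulo $\Z$, I would invoke the Poincar\'e symmetry: $\GG$ commutes with $\B^2$ and intertwines $\domm^k$ with $\domm^{m-k}$, so kernel dimensions in complementary degrees coincide. On $(t_0-\delta, t_0+\delta)$ the dimension $\dim \domm^k_{[0,\lambda]}(t)$ is constant and the only discontinuities come from eigenvalues of $\B(\D)^2$ or $\B(\D_{\textup{trivial}})^2$ crossing zero. Using the $\GG$-pairing of degrees $k \leftrightarrow m-k$ together with the oddness of $m$, each such event contributes an integer jump to $\tfrac{1}{2}\sum_k(-1)^k k\cdot \dim\ker(\cdot)$, in direct analogy with \eqref{konstantin}. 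This yields local independence modulo $\Z$, and passing to all of $\R$ is then a routine covering argument: changing $\lambda$ across an interval shifts $\xi'_\lambda$ only by a further integer combination of the same quantities, so the class modulo $\Z$ is unaffected.

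The main obstacle I foresee is the parity bookkeeping in the final step, namely verifying that each Poincar\'e-paired zero-crossing event contributes an integer, rather than a half-integer, to $\tfrac{1}{2}\sum_k(-1)^k k \cdot(\Delta \dim\ker)$; every other ingredient is a direct transcription of the local symbol-based arguments already employed in Propositions \ref{anomaly1} and \ref{anomaly2}.
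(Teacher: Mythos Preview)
Your overall strategy matches the paper's: reduce $\xi'_\lambda(\D,g^M(t))$ modulo $\Z$ to the full $\xi'(\D,g^M(t))$, then use that the value of the zeta function at zero is, up to the integer $\dim\ker$, a local invariant of the symbol and boundary conditions, so that the difference with $\textup{rank}(E)\cdot\xi'(\D_{\textup{trivial}},g^M(t))$ vanishes by local triviality of $(E,\D)$. Two remarks. First, in your bookkeeping the two occurrences of $\dim\ker(\B(\D)^2\restriction\domm^k)$ actually cancel: one comes from the heat-coefficient formula $\zeta(0)=A_m-\dim\ker$, the other (with opposite sign) from the finite piece $\zeta_{2\theta}(0,\B^2\restriction\domm^k_{[0,\lambda]})=\dim\domm^k_{[0,\lambda]}-\dim\ker$. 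So on each interval $(t_0-\delta,t_0+\delta)$ the only non-local terms are $\dim\domm^k_{[0,\lambda]}(t)$ (constant by choice of $\delta$) and $\dim\ker(\B(\D_{\textup{trivial}})^2\restriction\domm^k)$ (constant because for the trivial flat line bundle $\B_{\textup{trivial}}^2=\triangle_{\textup{rel}}\oplus\triangle_{\textup{abs}}$ and the kernel is cohomology). The ``zero-crossing'' concern for $\B(\D)^2$ is therefore not an issue.

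The genuine gap is exactly the obstacle you flag at the end, and your proposed resolution via the Poincar\'e pairing $k\leftrightarrow m-k$ is not sufficient. When you glue intervals by changing $\lambda$, $\xi'_\lambda$ shifts by $\tfrac{1}{2}\sum_k(-1)^k k\,\dim\domm^k_{\mathcal{I}}$ with $\mathcal{I}=(\lambda,\mu]$, $0\notin\mathcal{I}$. The duality $\dim\domm^k_{\mathcal{I}}=\dim\domm^{m-k}_{\mathcal{I}}$ alone yields only $\tfrac{1}{2}\sum_k(-1)^{k+1}k\,\dim\domm^k_{\mathcal{I}}\equiv \tfrac{m}{2}\dim\domm^{\textup{even}}_{\mathcal{I}}\bmod 2\Z$, and since $m$ is odd this is a half-integer unless $\dim\domm^{\textup{even}}_{\mathcal{I}}$ is even. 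The paper supplies this missing parity via Lemma~\ref{modulo-2z}(ii), whose proof uses the finer decomposition $\domm_{\mathcal{I}}=\ker(\DD_{\mathcal{I}}\GG_{\mathcal{I}})\oplus\ker(\GG_{\mathcal{I}}\DD_{\mathcal{I}})$ and the isomorphisms $\B^{\pm,\mathcal{I}}_k$ between degrees $k$ and $m-k\mp 1$ (rather than $m-k$), together with a separate middle-degree argument exploiting the block-diagonal form of $(\B^{\pm})^2$. Without this extra input the half-integer ambiguity cannot be ruled out, so you should either invoke Lemma~\ref{modulo-2z} directly or reproduce that $\B^{\pm}$ argument.
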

\begin{proof}
We show first that modulo $\Z$ it suffices to study the metric dependence of 
\begin{align*}
\xi'(\D, g^M(t)):=\frac{1}{2}\sum_{k=0}^m(-1)^{k}\cdot k\cdot \zeta_{2\theta}(s=0,(\B(\D, g^M(t))^2\restriction \domm^k).
\end{align*}
Indeed, by construction we have
$$\xi'(\D, g^M(t))=\xi'_{\lambda}(\D, g^M(t))+\frac{1}{2}\sum_{k=0}^m(-1)^k\cdot k\cdot \dim \domm^k_{(0,\lambda]}(t).$$
Anticipating the auxiliary result of Lemma \ref{modulo-2z} (iii) below, we obtain $$\xi'(\D, g^M(t))\equiv \xi'_{\lambda}(\D, g^M(t))\quad \textup{mod}\ \Z.$$
Recall that $\B(\D_{\textup{trivial}},g^M)\times \one^{\textup{rk}E}$ and $\B(\D, g^M)$ are locally isomorphic, as already encountered in the proof of Proposition \ref{anomaly2}. Now the statement of the proposition follows from the fact that the value of a zeta function at zero is given, modulo $\Z$ in order to avoid $\dim \ker \B(t)\in \Z$, by integrands of local invariants of the operator and its boundary conditions. 
\end{proof}
\begin{lemma}\label{modulo-2z}
Let $\mathcal{I}\subset \R$ denote any bounded intervall. Then 
\begin{enumerate}
\item $\frac{1}{2}\sum_{k=0}^m(-1)^{k+1}\cdot k\cdot \dim \domm^k_{\mathcal{I}}\equiv \frac{\dim M}{2}\dim \domm^{\textup{even}}_{\mathcal{I}}\ \textup{mod}\ 2\Z.$
\item If $0\notin \mathcal{I}$, then $\dim \domm^{\textup{even}}_{\mathcal{I}}\equiv 0 \ \textup{mod}\ 2\Z,$ 
\item If $0\notin \mathcal{I}$, then $\frac{1}{2}\sum_{k=0}^m(-1)^{k+1}\cdot k\cdot \dim \domm^k_{\mathcal{I}}\equiv 0 \ \textup{mod}\ \Z.$
\end{enumerate}
\end{lemma}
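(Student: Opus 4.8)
The plan is to deduce all three items from two structural facts about the finite-dimensional complex $(\domm_{\mathcal{I}},\DD_{\mathcal{I}})$ together with the parity of $m:=\dim M$. Write $d_k:=\dim\domm^k_{\mathcal{I}}$. First, since $\GG$ commutes with $\B^2$ it commutes with $\Pi_{\B^2,\mathcal{I}}$, preserves $\domm_{\mathcal{I}}$, and (sending degree $k$ to degree $m-k$) restricts to an isomorphism $\domm^k_{\mathcal{I}}\to\domm^{m-k}_{\mathcal{I}}$; hence $d_k=d_{m-k}$, and since $m$ is odd this isomorphism carries $\domm^{\textup{even}}_{\mathcal{I}}$ onto $\domm^{\textup{odd}}_{\mathcal{I}}$, so $\dim\domm^{\textup{even}}_{\mathcal{I}}=\dim\domm^{\textup{odd}}_{\mathcal{I}}$. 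Second, by Definition \ref{domain} the complex $(\domm,\DD)$ is the direct sum of the two subcomplexes $(\domr,\Dr)$ and $(\doma,\Da)$; the differential $\DD$ and the operator $\B^2$ respect this splitting, while $\GG$ and $\B$ interchange the two summands, $\GG$ carrying degree $k$ of one summand isomorphically onto degree $m-k$ of the other. Hence $\Pi_{\B^2,\mathcal{I}}$ respects the splitting as well, and $(\domm_{\mathcal{I}},\DD_{\mathcal{I}})$ is correspondingly a direct sum of two subcomplexes which $\GG$ still interchanges degree-reversingly.

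For (i) I would argue by a direct parity count. Splitting the sum according to the parity of $k$ and substituting $k\mapsto m-k$ in the even part (using $d_{m-k}=d_k$ and $m$ odd) gives
$$\sum_{k=0}^{m}(-1)^{k+1}\,k\,d_k=2\sum_{k\ \textup{odd}}k\,d_k-m\sum_{k\ \textup{odd}}d_k,$$
hence, using $\dim\domm^{\textup{odd}}_{\mathcal{I}}=\dim\domm^{\textup{even}}_{\mathcal{I}}$ from the first fact,
$$\frac12\sum_{k=0}^{m}(-1)^{k+1}\,k\,d_k-\frac{m}{2}\dim\domm^{\textup{even}}_{\mathcal{I}}=\sum_{k\ \textup{odd}}k\,d_k-m\,\dim\domm^{\textup{even}}_{\mathcal{I}}.$$
Modulo $2$, and using $k\,d_k\equiv d_k$ for odd $k$ together with $m$ odd, the right-hand side is congruent to $\sum_{k\ \textup{odd}}d_k-\dim\domm^{\textup{even}}_{\mathcal{I}}=\dim\domm^{\textup{odd}}_{\mathcal{I}}-\dim\domm^{\textup{even}}_{\mathcal{I}}=0$; so the displayed difference lies in $2\Z$, which is exactly (i). This argument does not require $0\notin\mathcal{I}$.

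For (ii), assume $0\notin\mathcal{I}$. Then $\B^2\restriction\domm_{\mathcal{I}}$ is invertible, hence so is $\B^{\mathcal{I}}=\GG_{\mathcal{I}}\DD_{\mathcal{I}}+\DD_{\mathcal{I}}\GG_{\mathcal{I}}$, and running the argument of Corollaries \ref{bijective} and \ref{cohomology} (i.e.\ applying [BK2, Lemma 5.8]) with $\mathcal{I}$ in place of $(\lambda,\infty)$ shows that $(\domm_{\mathcal{I}},\DD_{\mathcal{I}})$ is acyclic; consequently each of its two direct summands from the second fact is acyclic. A finite acyclic complex of vector spaces has even total dimension: writing $V^k$ for its terms and $W^k\subset V^k$ for the boundaries in degree $k$, one has $\dim V^k=\dim W^k+\dim W^{k+1}$ with $W^0=W^{m+1}=0$, so $\sum_k\dim V^k=2\sum_k\dim W^k$. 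Finally, the second fact with $m$ odd makes $\GG$ identify the even part of one summand with the odd part of the other, so that $\dim\domm^{\textup{even}}_{\mathcal{I}}$ equals the total dimension of a single summand; being the dimension of an acyclic complex, it is even, which is (ii).

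Finally (iii) follows at once: by (i), $\frac12\sum_{k=0}^{m}(-1)^{k+1}k\,d_k\equiv\frac{m}{2}\dim\domm^{\textup{even}}_{\mathcal{I}}\pmod{2\Z}$, and by (ii) $\dim\domm^{\textup{even}}_{\mathcal{I}}$ is even, so $\frac{m}{2}\dim\domm^{\textup{even}}_{\mathcal{I}}\in\Z$ and hence $\frac12\sum_{k=0}^{m}(-1)^{k+1}k\,d_k\in\Z$, i.e.\ it vanishes modulo $\Z$. The only substantive point is (ii): Poincar\'e symmetry alone yields merely $\dim\domm^{\textup{even}}_{\mathcal{I}}=\frac12\dim\domm_{\mathcal{I}}$, and half the (even) dimension of an acyclic complex need not be even, so it is essential to use that $\domm_{\mathcal{I}}$ splits into \emph{two} separately acyclic subcomplexes interchanged by $\GG$; the remaining steps are routine bookkeeping.
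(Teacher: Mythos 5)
Your proof is correct. Part (i) is essentially the paper's argument (both exploit the Poincar\'e symmetry $\dim\domm^k_{\mathcal{I}}=\dim\domm^{m-k}_{\mathcal{I}}$ coming from $\B^2_k=\GG\circ\B^2_{m-k}\circ\GG$, just with slightly different bookkeeping), and (iii) is the same trivial deduction from (i) and (ii). Where you genuinely diverge is in (ii), and your route is shorter and more conceptual. The paper first decomposes $\domm_{\mathcal{I}}$ via the chirality splitting $\ker(\DD_{\mathcal{I}}\GG_{\mathcal{I}})\oplus\ker(\GG_{\mathcal{I}}\DD_{\mathcal{I}})$, uses the isomorphisms $\B^{\pm,\mathcal{I}}_k$ given in \eqref{stern1}--\eqref{stern2} to reduce the parity count to the single middle degree $k=2p$, and only then invokes the rel/abs block structure of $(\B^{\pm})^2$ to show the middle-degree piece $\domm^{\pm,2p}_{\mathcal{I}}$ is even-dimensional. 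You instead go straight to the definitional splitting $\domm_{\mathcal{I}}=(\domr)_{\mathcal{I}}\oplus(\doma)_{\mathcal{I}}$ (which is $\B^2$- and $\DD$-stable since $\B^2$ and $\DD$ are block-diagonal), observe that each summand is a finite acyclic complex (acyclicity of $\domm_{\mathcal{I}}$ for $0\notin\mathcal{I}$ being Corollary \ref{cohomology} with $\mathcal{I}$ in place of $(\lambda,\infty)$) and hence has even total dimension, and then use that the anti-diagonal involution $\GG$ carries degree $k$ of one summand onto degree $m-k$ of the other, so $\dim\domm^{\textup{even}}_{\mathcal{I}}=\dim(\domr)_{\mathcal{I}}^{\textup{even}}+\dim(\domr)_{\mathcal{I}}^{\textup{odd}}$ equals the full (even) dimension of one summand. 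You also correctly identify why Poincar\'e symmetry alone is insufficient here. The two proofs use the same underlying ingredients (acyclicity, $\GG$-duality, the rel/abs block structure), but you avoid the degree-by-degree $\B^\pm$ bookkeeping entirely; what the paper's longer route buys is the auxiliary relations \eqref{stern1}--\eqref{stern2} and \eqref{BB-BB}, which it sets up here and reuses elsewhere.
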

\begin{proof}
Note first the following relation $$\B^2_k=\GG \circ \B^2_{m-k}\circ \GG.$$
Hence with $r=(m+1)/2$ we obtain:
\begin{align}\label{modulo-z}
\frac{1}{2}\sum_{k=0}^m(-1)^{k+1}\cdot k\cdot \dim \domm^k_{\mathcal{I}}=
\frac{1}{2}\sum_{k=0}^{r-1}(m-4k)\cdot \dim \domm^{2k}_{\mathcal{I}}=\\
=\frac{m}{2}\dim \domm^{\textup{even}}_{\mathcal{I}}-2\sum_{k=0}^{r-1}k \cdot \dim \domm^{2k}_{\mathcal{I}}.
\end{align}
This proves the first statement. For the second statement assume $0 \notin \mathcal{I}$ till the end of the proof. Consider the operators 
\begin{align}\label{stern1}
\B^{+,\mathcal{I}}_k=\GG_{\mathcal{I}}\DD_{\mathcal{I}}:\domm^k_{\mathcal{I}}\cap \ker (\DD_{\mathcal{I}}\GG_{\mathcal{I}})\rightarrow \domm^{m-k-1}_{\mathcal{I}}\cap \ker (\DD_{\mathcal{I}}\GG_{\mathcal{I}}), \\ \label{stern2}
\B^{-,\mathcal{I}}_k=\DD_{\mathcal{I}}\GG_{\mathcal{I}}:\domm^k_{\mathcal{I}}\cap \ker (\GG_{\mathcal{I}}\DD_{\mathcal{I}})\rightarrow \domm^{m-k+1}_{\mathcal{I}}\cap \ker (\GG_{\mathcal{I}}\DD_{\mathcal{I}}).
\end{align}
Since $0 \notin \mathcal{I}$, the maps $\B^{\pm, \mathcal{I}}_k$ are isomorphisms by bijectivity of the map in \eqref{bijective-two}. Furthermore they commute with $(\B^{\pm, \mathcal{I}})^2$ in the following way
\begin{align}\label{BB-BB}
\B^{\pm, \mathcal{I}}_k \circ [(\B^{\pm, \mathcal{I}})^2\restriction \domm^k]=[(\B^{\pm, \mathcal{I}})^2\restriction \domm^{m-k\mp 1}]\circ \B^{\pm, \mathcal{I}}_k.
\end{align}
Hence we obtain with $\domm^{\pm,k}_{\mathcal{I}}$ denoting the span of generalized eigenforms of $(\B^{\pm,\mathcal{I}})^2 \restriction \domm^k$ the following relations
\begin{align*}
\dim \domm^{+,k}_{\mathcal{I}}=\dim \domm^{+,m-k-1}_{\mathcal{I}}, \\
\dim \domm^{-,k}_{\mathcal{I}}=\dim \domm^{-,m-k+1}_{\mathcal{I}}.
\end{align*} 
Due to $\dim \domm^{\textup{even}}_{\mathcal{I}}=\dim \domm^{+,\textup{even}}_{\mathcal{I}}+\dim \domm^{-,\textup{even}}_{\mathcal{I}}$ this implies (recall $M$ is odd-dimensional)
\begin{align}\label{stern3}
\dim \domm^{\textup{even}}_{\mathcal{I}}\equiv \dim \domm^{+,2p}_{\mathcal{I}} \ \textup{mod} \ 2\Z, \textup{if} \ \dim M=4p+1, \\ \label{stern4}
\dim \domm^{\textup{even}}_{\mathcal{I}}\equiv \dim \domm^{-,2p}_{\mathcal{I}} \ \textup{mod} \ 2\Z, \textup{if} \ \dim M=4p-1.
\end{align}
Finally recall the explicit form of $(\B^{\pm})^2$:
\begin{align*}
(\B^+)^2=\left( \begin{array}{cc} \G \Da \G \Dr & 0 \\ 0 & \G \Dr \G \Da \end{array}\right)=:\left(\begin{array}{cc} D^+_1 & 0 \\ 0 & D^+_2 \end{array}\right), \\
(\B^-)^2=\left( \begin{array}{cc} \Dr \G \Da \G  & 0 \\ 0 & \Da \G \Dr \G \end{array}\right)=:\left(\begin{array}{cc} D^-_1 & 0 \\ 0 & D^-_2 \end{array}\right).
\end{align*}
Moreover we put $$(\B^{\pm, \mathcal{I}})^2\restriction \domm^k= D^{\pm , \mathcal{I}}_{1,k}\oplus D^{\pm , \mathcal{I}}_{2,k}.$$
Note the following relations
\begin{align*}
&(\G \Dr)\circ D^+_1=D^+_2\circ (\G \Dr), \\
&D^+_1\circ (\G \Da)=(\G \Da)\circ D^+_2; \\
&\hspace{30mm} (\Da \G)\circ D^-_1=D^-_2\circ (\Da \G), \\
&\hspace{30mm} D^-_1\circ (\Dr \G)= (\Dr \G)\circ D^-_2.
\end{align*}
Due to $0 \notin \mathcal{I}$ these relations imply, similarly to \eqref{BB-BB}, spectral equivalence of $D^{\pm, \mathcal{I}}_{1,k}$ and $D^{\pm, \mathcal{I}}_{2,k}$ in the middle degree $k=2p$ for $\dim M=4p \pm 1$, respectively. This finally yields the desired relations
\begin{align*}
\dim \domm^{\textup{even}}_{\mathcal{I}}\equiv \dim \domm^{+,2p}_{\mathcal{I}}\equiv 0 \ \textup{mod} \ 2\Z, \textup{if} \ \dim M=4p+1, \\ 
\dim \domm^{\textup{even}}_{\mathcal{I}}\equiv \dim \domm^{-,2p}_{\mathcal{I}}\equiv 0 \ \textup{mod} \ 2\Z, \textup{if} \ \dim M=4p-1.
\end{align*}
\end{proof}\ \\
\\[-7mm] Propositions \ref{anomaly1}, \ref{anomaly2} and \ref{anomaly3} determine together the metric anomaly of $\rho(\D, g^M, h^E)$ up to a sign and we deduce the following central corollary.
\begin{cor}\label{RAT-sign}
Let $M$ be an odd-dimensional oriented compact Riemannian manifold. Let $(E, \D, h^E)$ be a flat complex vector bundle over $M$. Denote by $\D_{\textup{trivial}}$ the trivial connection on $M\times \C$ and let $\B_{\textup{trivial}}$ denote the even part of the associated odd-signature operator. Then 
\begin{align*}
\rho_{\textup{an}}(\D):=\rho(\D, g^M, h^E)\cdot \exp\left[i\pi \, \textup{rk}(E)(\eta(\B_{\textup{trivial}}(g^M)) + \xi'(\D_{\textup{trivial}}, g^M))\right]
\end{align*}
is modulo sign independent of the choice of $g^M$ in the interior of $M$.
\end{cor}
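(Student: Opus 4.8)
The plan is to assemble the sign-independence statement from the three anomaly propositions already established, the only new input being a careful bookkeeping of how the various pieces of the formula \eqref{drei} for $\rho(\D, g^M, h^E)$ behave under a metric variation supported in the interior of $M$. Recall from \eqref{drei} that
\[
\rho(\D, g^M, h^E)=e^{\xi_{\lambda}(\D, g^M)}\,e^{-i\pi \xi'_{\lambda}(\D, g^M)}\,e^{-i\pi \eta(\B^{(\lambda, \infty)}_{\textup{even}}(g^M))}\cdot \rho_{[0,\lambda]},
\]
so that, multiplying by the correction factor $\exp[i\pi\,\textup{rk}(E)(\eta(\B_{\textup{trivial}}(g^M))+\xi'(\D_{\textup{trivial}}, g^M))]$ defining $\rho_{\textup{an}}(\D)$, we may organize $\log \rho_{\textup{an}}(\D)$ (suitably interpreted on the determinant line) as the sum of three groups: the group $e^{\xi_{\lambda}(\D, g^M)}\cdot\rho_{[0,\lambda]}$; the group $\exp[-i\pi(\xi'_{\lambda}(\D,g^M)-\textup{rk}(E)\,\xi'(\D_{\textup{trivial}},g^M))]$; and the group $\exp[-i\pi(\eta(\B^{(\lambda,\infty)}_{\textup{even}}(g^M))-\textup{rk}(E)\,\eta(\B_{\textup{trivial}}(g^M)))]$.

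First I would invoke Proposition \ref{anomaly1}, which states precisely that the first group $e^{\xi_{\lambda}(\D, g^M(t))}\cdot\rho_{t,[0,\lambda]}$ is locally constant in $t$ for variations in the interior; this contributes nothing to the anomaly. Next I would apply Proposition \ref{anomaly3}, which gives that $\xi'_{\lambda}(\D,g^M(t))-\textup{rk}(E)\,\xi'(\D_{\textup{trivial}},g^M(t))$ is independent of $t$ modulo $\Z$; hence the second group is metric-independent up to a factor $e^{-i\pi n}=\pm 1$ for $n\in\Z$. Finally I would apply Proposition \ref{anomaly2}, which gives that $\eta(\B^{(\lambda,\infty)}_{\textup{even}}(t))-\textup{rk}(E)\,\eta(\B_{\textup{trivial}}(t))$ is independent of $t$ modulo $\Z$; so the third group is likewise metric-independent up to a sign. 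Combining the three, $\rho_{\textup{an}}(\D)$ is independent of the metric variation up to a sign, which is the claim. Passage from the local constancy in Propositions \ref{anomaly1}--\ref{anomaly3} to global independence on the path follows by the usual connectedness argument: cover the compact parameter interval by finitely many of the $\delta$-neighborhoods on which each proposition applies, noting that the choice of auxiliary $\lambda$ can be made uniformly on each such neighborhood by discreteness of the spectrum of $\B(t)^2$, and that the determinant-line element $\rho_{[0,\lambda]}$ versus $\rho_{[0,\mu]}$ comparison is handled by Proposition \ref{rho-element}.

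One slightly delicate point I would spell out is that the three anomaly statements are "mod $\Z$" (or, for Proposition \ref{anomaly1}, exact), and the exponentials $e^{-i\pi(\,\cdot\,)}$ turn a shift by $n\in\Z$ into a shift by the sign $(-1)^n$; so the combined ambiguity is exactly a sign, not a general unit. I would also remark that a variation of $h^E$ produces no change at all, as already observed in the text preceding Proposition \ref{anomaly1} (different Hermitian metrics give equivalent $L^2$-norms, hence the same domain for $\B$, and $\B$ itself does not depend on $h^E$ as a differential operator), so the statement is genuinely about $g^M$ alone.

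The main obstacle is not in any new computation but in the careful patching: one must check that on the overlap of two $\delta$-neighborhoods the three "locally constant / constant mod $\Z$" assertions are compatible and can be chained, and in particular that the auxiliary spectral cut $\lambda$ and the Agmon angle $\theta\in(-\pi/2,0)$ used to define $\xi_\lambda,\xi'_\lambda$ and the graded determinant can be chosen consistently across the cover — this is where Proposition \ref{rho-element} (independence of $\lambda$ and of $\theta$) does the real work, allowing us to reduce everything to a fixed choice on each patch and then glue. Once that is in place, the corollary is an immediate formal consequence of Propositions \ref{anomaly1}, \ref{anomaly2}, \ref{anomaly3} and formula \eqref{drei}.
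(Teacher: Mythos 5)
Your proposal is correct and matches the paper's (unwritten) argument exactly: the paper simply asserts, in the sentence preceding the corollary, that Propositions \ref{anomaly1}, \ref{anomaly2}, \ref{anomaly3} together determine the metric anomaly up to sign, and you have reconstructed precisely that reasoning — grouping the factors in \eqref{drei} as you describe, using $e^{-i\pi n}=(-1)^n$ to turn the two mod-$\Z$ statements into sign ambiguities, and invoking Proposition \ref{rho-element} together with discreteness of the spectrum to patch the local statements into a global one.
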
\ \\
\\[-7mm] In view of the corollary above we can now define the "refined analytic torsion". It will be a differential invariant in the sense, that even though defined by geometric data in form of the metric structures, it is shown to be independent of their form in the interior of the manifold.
\begin{defn}\label{rho-def}
Let $M$ be an odd-dimensional oriented Riemannian manifold. Let $(E,\D)$ be a flat complex vector bundle over $M$. Then the refined analytic torsion is defined as the equivalence class of $\rho_{\textup{an}}(\D)$ modulo multiplication by $\exp [i \pi]$: $$\rho_{\textup{an}}(M, E):=\rho_{\textup{an}}(\D) /_{e^{i\pi}}.$$
\end{defn} \ \\
\\[-7mm] Note that the sign indeterminacy is also present in the original construction by Braverman and Kappeler, see [BK2, Remark 9.9 and Remark 9.10]. In the presentation below, we refer to the representative $\rho_{\textup{an}}(\D)$ of the class $\rho_{\textup{an}}(M, E)$ as refined analytic torsion, as well.

\section{Ray-Singer norm of Refined analytic torsion}\label{RS} \
\\[-3mm] Recall first the construction of the Ray-Singer torsion as a norm on the determinant line bundle for compact oriented Riemannian manifolds. Let $(M,g^M)$ and $(E,\D,h^E)$ be as in Subsection \ref{explicit-unitary}. 
\\[3mm] Let $\triangle_{\textup{rel}}$ be the Laplacian associated to the Fredholm complex $(\domr,\Dr)$ defined at the beginning of Section \ref{explicit-unitary}. As in \eqref{decomposition} in case of the squared odd-signature operator $\B^2$, it induces a spectral decomposition into a direct sum of subcomplexes for any $\lambda \geq 0$. $$(\domr, \Dr)=(\domr^{[0,\lambda]}, \Dr^{[0,\lambda]})\oplus (\domr^{(\lambda, \infty)}, \Dr^{(\lambda, \infty)}).$$ The scalar product on $\domr^{[0,\lambda]}$ induced by $g^M$ and $h^E$, induces a norm on the determinant line $\textup{Det}(\domr^{[0,\lambda]}, \Dr^{[0,\lambda]})$ (we use the notation of determinant lines of finite dimensional complexes in [BK2, Section 1.1]). There is a canonical isomorphism $$\phi_{\lambda}:\textup{Det}(\domr^{[0,\lambda]}, \Dr^{[0,\lambda]})\to \textup{Det}H^*(\domr, \Dr),$$ induced by the Hodge-decomposition in finite-dimensional complexes. Choose on $\textup{Det}H^*(\domr, \Dr)$ the norm $\|\cdot\|^{\textup{rel}}_{\lambda}$ such that $\phi_{\lambda}$ becomes an isometry. Further denote by $T^{RS}_{(\lambda, \infty)}(\Dr)$ the scalar analytic torsion associated to the complex $(\domr^{(\lambda, \infty)}, \Dr^{(\lambda, \infty)})$:
$$T^{RS}_{(\lambda, \infty)}(\Dr):=\exp \left(\frac{1}{2}\sum_{k=1}^m(-1)^{k+1}\cdot k\cdot \zeta'(s=0, \triangle^{(\lambda, \infty)}_{k, \textup{rel}})\right),$$
where $\triangle^{(\lambda, \infty)}_{\textup{rel}}$ is the Laplacian associated to the complex $(\domr^{(\lambda, \infty)}, \Dr^{(\lambda, \infty)})$. Note the difference to the sign convention of [RS]. However we are consistent with [BK2].
\\[3mm]The Ray-Singer norm on $\textup{Det}H^*(\domr, \Dr)$ is then defined by 
\begin{align}\label{norm-rel}
\|\cdot\|^{RS}_{\textup{Det}H^*(\domr, \Dr)}:=\|\cdot\|^{\textup{rel}}_{\lambda}\cdot T^{RS}_{(\lambda, \infty)}(\Dr).
\end{align}
With a completely analogous construction we obtain the Ray-Singer norm on the determinant line $\textup{Det}H^*(\doma, \Da)$
\begin{align}\label{norm-abs}
\|\cdot\|^{RS}_{\textup{Det}H^*(\doma, \Da)}:=\|\cdot\|^{\textup{abs}}_{\lambda}\cdot T^{RS}_{(\lambda, \infty)}(\Da).
\end{align}
Both constructions turn out to be independent of the choice of $\lambda \geq 0$, which follows from arguments analogous to those in the proof of Proposition \ref{rho-element}. In fact we get for $0 \leq \lambda < \mu$: 
\begin{align*}
\|\cdot \|^{\textup{rel/abs}}_{\mu}=\|\cdot \|^{\textup{rel/abs}}_{\lambda} \cdot T^{RS}_{(\lambda, \mu ]}(\D_{\textup{min/max}}),
\end{align*}
which implies that the Ray-Singer norms are well-defined. Furthermore by the arguments in [Mu, Theorem 2.6] the norms do not depend on the metric structures in the interior of the manifold.
\begin{remark}
Note that the Ray-Singer analytic torsion considered in [V] and [L\"{u}] differs from our setup in the sign convention and by the absence of factor $1/2$.
\end{remark} \ \\
\\[-7mm] We can apply the same construction to the Laplacian of the complex $(\domm, \DD)$ introduced in Definition \ref{domain} $$(\domm, \DD)=(\domr, \Dr)\oplus (\doma, \Da).$$ Similarly we obtain
\begin{align}\label{norm}
\|\cdot\|^{RS}_{\textup{Det}H^*(\domm, \DD)}:=\|\cdot\|_{\lambda}\cdot T^{RS}_{(\lambda, \infty)}(\DD).
\end{align}
This "doubled" Ray-Singer norm is naturally related to the previous two norms in \eqref{norm-rel} and \eqref{norm-abs}. There is a canonical "fusion isomorphism", cf. [BK2, (2.18)] for general complexes of finite dimensional vector spaces
\begin{align}\nonumber
\mu: \textup{Det}H^*(\domr, \Dr)\oplus \textup{Det}H^*(\doma, \Da) \to \textup{Det}H^*(\domm, \DD), 
\\ \label{fusion}\textup{such that}\ \|\mu(h_1\otimes h_2)\|_{\lambda}=\|h_1\|^{\textup{rel}}_{\lambda}\cdot \|h_2\|^{\textup{abs}}_{\lambda},
\end{align}
where we recall $(\domm, \DD)=(\domr, \Dr)\oplus (\doma, \Da)$ by definition. Further we have by the definition of $(\domm, \DD)$ following relation between the scalar analytic torsions: 
\begin{align}\label{scalar}
T^{RS}_{(\lambda, \infty)}(\DD)=T^{RS}_{(\lambda, \infty)}(\Dr)\cdot T^{RS}_{(\lambda, \infty)}(\Da).
\end{align}
Combining \eqref{fusion} and \eqref{scalar} we end up with a relation between norms 
\begin{align}
\|\mu(h_1\otimes h_2)\|^{RS}_{\textup{Det}H^*(\domm, \DD)}=\|h_1\|^{RS}_{\textup{Det}H^*(\domr, \Dr)}\cdot \|h_2\|^{RS}_{\textup{Det}H^*(\doma, \Da)}. 
\end{align}
The next theorem provides a motivation for viewing $\rho_{\textup{an}}(\D)$ as a refinement of the Ray-Singer torsion.
\begin{thm}\label{rho-norm}
Let $M$ be a smooth compact odd-dimensional oriented Riemannian manifold. Let $(E,\D, h^E)$ be a flat complex vector bundle over $M$ with a flat Hermitian metric $h^E$. Then $$\|\rho_{\textup{an}}(\D)\|^{RS}_{\textup{Det}H^*(\domm, \DD)}=1.$$
\end{thm}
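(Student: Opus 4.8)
The strategy is to reduce the statement to the corresponding fact for the classical Ray-Singer norm on the relative and absolute complexes, exploiting the fusion isomorphism $\mu$ and the relation $T^{RS}_{(\lambda,\infty)}(\DD) = T^{RS}_{(\lambda,\infty)}(\Dr)\cdot T^{RS}_{(\lambda,\infty)}(\Da)$ established above, together with the Poincar\'e duality relating the relative and absolute complexes via the chirality operator $\GG$. In the flat Hermitian case, $\B$ is self-adjoint, $\B^2 = \triangle_{\textup{rel}}\oplus\triangle'_{\textup{abs}}$ acts as the honest Laplacian, and the spectral decompositions of $\domm$ refine those of $\domr$ and $\doma$ compatibly. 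First I would recall from \eqref{drei} the explicit expression
\begin{align*}
\rho(\D,g^M,h^E) = e^{\xi_\lambda(\D,g^M)}\,e^{-i\pi\xi'_\lambda(\D,g^M)}\,e^{-i\pi\eta(\B^{(\lambda,\infty)}_{\textup{even}})}\cdot\rho_{[0,\lambda]},
\end{align*}
and observe that in the unitary case the terms $\xi'_\lambda$ and $\eta(\B^{(\lambda,\infty)}_{\textup{even}})$ contribute only a complex phase of modulus one, so that $\|\rho(\D,g^M,h^E)\|^{RS} = e^{\operatorname{Re}\xi_\lambda(\D,g^M)}\cdot\|\rho_{[0,\lambda]}\|^{RS}$; moreover for self-adjoint $\B^2$ the quantity $\xi_\lambda$ is already real.

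Second, I would compute $\|\rho_{[0,\lambda]}\|^{RS}_{\textup{Det}H^*(\domm,\DD)}$ by the purely finite-dimensional calculation in \textup{[BK2, Proposition 5.6 / Lemma 5.5]}: for a finite complex with a chirality operator and a compatible inner product, the refined-torsion element $\rho_{[0,\lambda]}$ (as in \eqref{finite-torsion}) has Ray-Singer-type norm equal to the product of finite-dimensional zeta-determinants of the Laplacian restricted to $\domm^k_{[0,\lambda]}$, raised to the appropriate alternating weights $(-1)^k k/2$. Concretely, one shows $\|\rho_{[0,\lambda]}\|_\lambda = \prod_k \big(\det\nolimits'(\triangle\restriction\domm^k_{[0,\lambda]})\big)^{(-1)^k k/2}$; this is where the anti-diagonal form of $\GG$ and the Poincar\'e-duality identity $\GG\DD = \DD^*\GG$ enter, via the pairing between degree $k$ and degree $m-k$ and the fact that $r = (m+1)/2$. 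Combining this with the definition \eqref{norm} of the doubled Ray-Singer norm — whose finite-part factor $\|\cdot\|_\lambda$ is built from the same inner product, while its infinite-part factor is $T^{RS}_{(\lambda,\infty)}(\DD)$ — and with $\exp(\operatorname{Re}\xi_\lambda)$ recognized (again by \textup{[BK2]} and the splitting $\B^2 = \triangle_{\textup{rel}}\oplus\triangle'_{\textup{abs}}$) as exactly $T^{RS}_{(\lambda,\infty)}(\DD)^{-1}$ in the self-adjoint case, all factors cancel and one is left with $1$.

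Third, I would check the bookkeeping: the sign/phase conventions for $\zeta'(0)$ versus $\frac{d}{ds}|_0\zeta$, the factor $1/2$, and the alternating signs must be matched precisely against \eqref{xi1}, \eqref{finite-torsion} and \eqref{norm}, and one must verify that the Agmon-angle choice $2\theta$ for $\B^2$ is consistent with the positive Laplacian so that $\zeta_{2\theta}$ agrees with the ordinary zeta function of $\triangle$ on each degree. Independence of $\lambda$ on both sides (Proposition \ref{rho-element} for $\rho$, the remark after \eqref{norm-abs} for the norm) lets one pass freely between spectral truncations. I expect the \textbf{main obstacle} to be precisely this reconciliation of normalizations together with the verification that the finite-dimensional norm computation of $\rho_{[0,\lambda]}$ goes through verbatim for the \emph{doubled} complex $(\domm,\DD) = (\domr,\Dr)\oplus(\doma,\Da)$ rather than a single complex with duality: one must confirm that the fusion isomorphism $\mu$ of \eqref{fusion} is compatible with the chirality operator in the sense needed by \textup{[BK2, Section 3]}, i.e.\ that $\rho_{[0,\lambda]}$ pulls back under $\mu$ to the expected tensor product of finite-complex torsion elements for the relative and absolute pieces. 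Once that compatibility is in place, the theorem follows by the cancellation outlined above, and, since the statement is metric-independent by Corollary \ref{RAT-sign} and the metric-independence of the Ray-Singer norm, it suffices in fact to verify the identity for any one convenient metric.
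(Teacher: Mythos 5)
Your overall strategy agrees with the paper's: use \eqref{drei}, observe that in the flat Hermitian case $\B^2 = \triangle_{\textup{rel}}\oplus\triangle_{\textup{abs}}$ so that $\xi_\lambda(\D,g^M) = -\log T^{RS}_{(\lambda,\infty)}(\DD)$, note that $\B_{\textup{even}}$ is self-adjoint so $\xi'_\lambda$ and $\eta(\B^{(\lambda,\infty)}_{\textup{even}})$ contribute only phases of modulus one, and thus $|\det\nolimits_{gr}(\B^{(\lambda,\infty)}_{\textup{even}})| = T^{RS}_{(\lambda,\infty)}(\DD)^{-1}$, which exactly cancels the scalar-torsion factor in \eqref{norm}.

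However, the finite-dimensional step is misstated and, as written, would not close the argument. You claim
$\|\rho_{[0,\lambda]}\|_\lambda = \prod_k\bigl(\det\nolimits'(\triangle\restriction\domm^k_{[0,\lambda]})\bigr)^{(-1)^k k/2}$,
which in the paper's conventions is the finite-dimensional truncated scalar torsion $T^{RS}_{[0,\lambda]}$. That extra factor does not cancel against anything in \eqref{norm}, so your bookkeeping would leave a leftover term. The correct statement, which the paper cites as \textup{[BK2, Lemma 4.5]}, is simply $\|\rho_{[0,\lambda]}\|_\lambda = 1$. The mechanism is purely the isometry of $\GG_{[0,\lambda]}$ (which holds because $h^E$ is flat and $\GG$ is the anti-diagonal operator built from the self-adjoint involution $\G$): in \eqref{finite-torsion} each factor $c_k^{(-1)^k}$ is paired against $(\GG_{[0,\lambda]}c_k)^{(-1)^{k+1}}$, and since $\GG_{[0,\lambda]}$ preserves the Hodge-induced norms on cohomology, the norms of $c_k$ and $\GG_{[0,\lambda]}c_k$ cancel in each pair, leaving $1$ regardless of the choice of the $c_k$. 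No zeta-determinant appears. Once this is corrected the proof is three lines long, as in the paper.

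You also anticipate an obstacle that isn't there: there is no need to unpack $(\domm,\DD)$ through the fusion isomorphism $\mu$ into relative and absolute pieces, nor to check compatibility of $\rho_{[0,\lambda]}$ with $\mu$. The paper's proof (and \textup{[BK2, Lemma 4.5]}) applies directly to the doubled complex $(\domm,\DD)$ with its chirality operator $\GG$, since the only hypothesis needed is that the chirality operator is an isometric involution compatible with the inner product — which holds here. The fusion isomorphism is used elsewhere in Section \ref{RS} to relate the doubled norm to the rel/abs norms, not in the proof of Theorem \ref{rho-norm}.
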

\begin{proof}
Recall from the assertion of Theorem \ref{log-det-gr} 
\begin{align*}
\det\nolimits_{gr} (\B^{(\lambda, \infty)}_{\textup{even}})=e^{\xi_{\lambda}(\D, g^M)}\cdot e^{-i\pi \xi'_{\lambda}(\D,g^M)}\cdot e^{-i\pi\eta(\B_{\textup{even}})},
\end{align*}
Flatness of $h^E$ implies by construction that $\B^2=\triangle_{\textup{rel}}\oplus \triangle_{\textup{abs}}$ and hence
\begin{align*}
\xi_{\lambda}(\D, g^M)=-\log T^{RS}_{(\lambda, \infty)}(\DD).
\end{align*}
Further $\B_{\textup{even}}$ is self-adjoint and thus has a real spectrum. Hence $\eta(\B_{\textup{even}})$ and $\xi'_{\lambda}(\D,g^M)$ are real-valued, as well. Thus we derive 
\begin{align}\label{vier}
\left|\det\nolimits_{gr} (\B^{(\lambda, \infty)}_{\textup{even}})\right|=\frac{1}{T^{RS}_{(\lambda, \infty)}(\DD)}.
\end{align}
Furthermore we know from [BK2, Lemma 4.5], which is a general result for complexes of finite-dimensional vector spaces,
\begin{align}\label{5}
\|\rho_{[0,\lambda]}\|_{\lambda}=1.
\end{align}
Now the assertion follows by combining the definition of the refined analytic torsion with \eqref{vier}, \eqref{5} and the fact that the additional terms annihilating the metric anomaly are all of norm one. In fact we have: 
\begin{align*}
\|\rho_{\textup{an}}(\D)\|^{RS}_{\textup{Det}H^*(\domm, \DD)}=\left|\det\nolimits_{gr} (\B^{(\lambda, \infty)}_{\textup{even}})\right| \cdot T^{RS}_{(\lambda, \infty)}(\DD) \cdot \|\rho_{[0,\lambda]}\|_{\lambda} = 1.
\end{align*}
\end{proof}\ \\
\\[-7mm] If the Hermitian metric is not flat, the situation becomes harder. In the setup of closed manifolds M. Braverman and T. Kappeler performed a deformation procedure in [BK2, Section 11] and proved in this way the relation between the Ray-Singer norm and the refined analytic torsion in  [BK2, Theorem 11.3]. 
\\[3mm] Unfortunately the deformation argument is not local and the arguments in [BK2] do not apply in the setup of manifolds with boundary. Nevertheless we can derive appropriate result by relating our discussion to the closed double manifold. 
\\[3mm] Assume the metric structures $(g^M, h^E)$ to be product near the boundary $\partial M$. The issues related to the product structures are discussed in detail in [BLZ, Section 2]. More precisely, we identify using the inward geodesic flow a collar neighborhood $U\subset M$ of the boundary $\partial M$ diffeomorphically with $[0,\epsilon)\times \partial M, \epsilon > 0$. Explicitly we have the diffeomorphism 
\begin{align*}
\phi^{-1}:[0,\epsilon)\times \partial M &\rightarrow U, \\
(t,p)& \mapsto \gamma_p(t),
\end{align*}
where $\gamma_p$ is the geodesic flow starting at $p \in \partial M$ and $\gamma_p(t)$ is the geodesics from $p$ of length $t \in [0,\epsilon)$. The metric $g^M$ is product near the boundary, if over $U$ it is given under the diffeomorphism $\phi: U \to [0,\epsilon)\times \partial M$ by 
\begin{align}
\phi_*g^M|_U=dx^2\oplus g^M|_{\partial M}.
\end{align}
The diffeomorphism $U \cong [0,\epsilon)\times \partial M$ shall be covered by a bundle isomorphism $\widetilde{\phi}: E|_U \to [0,\epsilon)\times E|_{\partial M}$. The fiber metric $h^E$ is product near the boundary, if it is preserved by the bundle isomorphism, i.e. 
\begin{align}
\widetilde{\phi}_*h^E|_{\{x\}\times \partial M}=h^E|_{\partial M}.
\end{align}
The assumption of product structures guarantees that the closed double manifold $$\mathbb{M}=M\cup_{\partial M}M$$
is a smooth closed Riemannian manifold and the Hermitian vector bundle $(E,h^E)$ extends to a smooth Hermitian vector bundle $(\mathbb{E},h^{\mathbb{E}})$ over the manifold $\mathbb{M}$. 
\\[3mm] Moreover we assume the flat connection $\D$ on $E$ to be in \emph{temporal gauge}. The precise definition of a connection in temporal gauge and the proof of the fact that each flat connection is gauge-equivalent to a flat connection in temporal gauge, are provided in [BV4, ]. 
\\[3mm] The assumption on $\D$ to be a flat connection in temporal gauge is required in the present context to guarantee that $\D$ extends to a smooth flat connection $\mathbb{D}$ on $\mathbb{E}$, with $$\mathbb{D}|_{M}=\D.$$
\begin{thm}\label{double}
Let $(M^m,g^M)$ be an odd-dimensional oriented and compact smooth Riemannian manifold with boundary $\partial M$. Let $(E,\D,h^E)$ be a flat Hermitian vector bundle with the Hermitian metric $h^E$, not necessarily flat. 
\\[3mm] Assume the metric structures $(g^M,h^E)$ to be product and the flat connection $\D$ to be in temporal gauge near the boundary $\partial M$. Then
$$ \|\rho_{\textup{an}}(\D)\|^{RS}_{\det H^*(\domm,\DD)}=\textup{exp}[\pi \textup{Im}\, \eta (\B_{\textup{even}}(g^M))].$$
\end{thm}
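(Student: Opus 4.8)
The plan is to reduce the statement to the closed-manifold result of Braverman and Kappeler by working on the double $\mathbb{M}=M\cup_{\partial M}M$. The key observation is that, under the product and temporal-gauge hypotheses, the de Rham complex with the combined minimal/maximal boundary conditions on $M$ is, up to a natural identification, exactly "half" of the de Rham complex on $\mathbb{M}$. Concretely, let $\alpha:\mathbb{M}\to\mathbb{M}$ be the isometric involution interchanging the two copies of $M$; it lifts to a bundle involution $\widetilde\alpha$ of $\mathbb{E}$ covering $\alpha$, and since $\D$ is in temporal gauge and $h^E$ is product, $\widetilde\alpha$ is compatible with $\mathbb{D}$ and $h^{\mathbb{E}}$. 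The pullback $\alpha^*$ then splits $\Omega^*(\mathbb{M},\mathbb{E})$ into $\pm1$ eigenspaces, which are naturally identified with $\Omega^*_{\max}(M,E)$ and $\Omega^*_{\min}(M,E)$ respectively (a form on $\mathbb{M}$ even under $\alpha^*$ restricts to an arbitrary form on $M$, while an odd one must have vanishing pullback to $\partial M$). Thus the full de Rham complex of $\mathbb{M}$ is isomorphic, as a complex with chirality operator, to $(\doma,\Da)\oplus(\domr,\Dr)=(\domm,\DD)$, with the chirality operator on $\mathbb{M}$ matching $\GG$.

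First I would make this identification precise at the level of complexes, chirality operators, and $L^2$-structures, citing the product-structure discussion of [BLZ, Section 2]. Then the odd-signature operator $\mathbb{B}_{\textup{even}}$ on $\mathbb{M}$ (in the sense of [BK2]) corresponds under this isomorphism to the even part $\B_{\textup{even}}$ of our odd-signature operator, and similarly the refined analytic torsion $\rho_{\textup{an}}(\mathbb{D})$ of $\mathbb{M}$ is identified with $\rho_{\textup{an}}(\D)$; here one must be careful that the extra "trivial-connection" correction terms introduced in Corollary \ref{RAT-sign} to kill the metric anomaly are matched on both sides, which works because $\D_{\textup{trivial}}$ on $M$ doubles to $\D_{\textup{trivial}}$ on $\mathbb{M}$. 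Likewise the doubled Ray-Singer norm $\|\cdot\|^{RS}_{\textup{Det}H^*(\domm,\DD)}$ is identified with the Ray-Singer norm on $\textup{Det}H^*(\mathbb{M},\mathbb{E})$, using \eqref{norm} together with the product-structure behaviour of the analytic torsion.

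Next I would invoke the Braverman--Kappeler theorem [BK2, Theorem 11.3]: for a flat Hermitian vector bundle over a closed odd-dimensional manifold, with possibly non-flat metric, one has $\|\rho_{\textup{an}}(\mathbb{D})\|^{RS}=\exp(\pi\,\textup{Im}\,\eta(\mathbb{B}_{\textup{even}}))$. Translating both sides back to $M$ via the identifications above yields exactly the claimed formula
$$\|\rho_{\textup{an}}(\D)\|^{RS}_{\det H^*(\domm,\DD)}=\textup{exp}[\pi\,\textup{Im}\,\eta(\B_{\textup{even}}(g^M))].$$

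The main obstacle is establishing the identification carefully enough: one must check that the Hodge-star (hence $\GG$) on $\mathbb{M}$ restricts correctly under $\alpha^*$ — this uses that $\mathbb{M}$ is oriented compatibly and that $\alpha$ reverses or preserves orientation in just the right way so that $\G$ on $M$ intertwines the $\pm$ eigenspaces as in \eqref{chirality} — and that the minimal/maximal domains (not just the smooth forms) match the closures of the $\pm$ eigenspaces of the unbounded operator $\alpha^*$. The product and temporal-gauge hypotheses are exactly what make the relevant operators have the split form $P_{\C}(t)\times\one^{\textup{rk}E}$ near the boundary, so that the eta-invariant and the derived invariants restrict as claimed. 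Once these compatibilities are in place, the theorem is a formal consequence of the closed case, so the deformation argument of [BK2, Section 11], which fails directly on manifolds with boundary, is circumvented entirely.
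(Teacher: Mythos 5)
Your proposal is correct and follows essentially the same route as the paper: doubling $M$ to a closed manifold $\mathbb{M}$, using the involution $\alpha$ to split the de Rham complex of $\mathbb{M}$ into $\pm1$ eigenspaces of $\alpha^*$ identified with $(\doma,\Da)$ and $(\domr,\Dr)$, observing that the orientation-reversal of $\alpha$ makes the Hodge star anti-diagonal so that $\mathbb{G}$ corresponds to $\GG$, and then transporting $\rho_{\textup{an}}$, the Ray--Singer norm, and the eta-invariant through this spectral-preserving isomorphism before invoking [BK2, Theorem 11.3]. The paper makes the identification precise via the explicit isometry $\Phi(\w^+\oplus\w^-)=2\w^+|_M\oplus 2\w^-|_M$ and the relations $\Phi\mathbb{D}\Phi^{-1}=\DD$, $\Phi\mathbb{G}\Phi^{-1}=\GG$, which is exactly the program you outline.
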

\begin{proof}
By assumption we obtain a closed Riemannian double manifold $(\mathbb{M},g^{\mathbb{M}})$ and a flat Hermitian vector bundle $(\mathbb{E}, \mathbb{D}, h^{\mathbb{E}})$ over $\mathbb{M}$ with a flat Hermitian metric $h^{\mathbb{E}}$. Denote by $(\dom, \mathbb{D})$ the unique boundary conditions (see [BL1]) of the twisted de Rham complex $(\Omega^*(\mathbb{M}, \mathbb{E}),\mathbb{D})$. Denote the closure of $\Omega^*(\mathbb{M}, \mathbb{E})$ with respect to the $L^2-$scalar product defined by $g^{\mathbb{M}}$ and $h^{\mathbb{E}}$, by $L^2_*(\mathbb{M}, \mathbb{E})$.
\\[3mm] The Riemannian metric $g^{\mathbb{M}}$ gives rise to the Hodge star operator $*$ and we set $$\mathbb{G}:=i^r(-1)^{\frac{k(k+1)}{2}}*:\Omega^k(\mathbb{M}, \mathbb{E})\rightarrow \Omega^{k-1}(\mathbb{M}, \mathbb{E}), \quad r:=(\dim M +1)/2$$
which extends to a self-adjoint involution on $L^2_*(\mathbb{M}, \mathbb{E})$. We define the odd signature operator $\mathbb{B}$ of the Hilbert complex $(\dom, \mathbb{D})$: $$\mathbb{B}:=\mathbb{G}\mathbb{D}+\mathbb{D}\mathbb{G}.$$
This is precisely the odd-signature operator associated to the closed manifold $\mathbb{M}$, as used in the construction of [BK1, BK2].
\\[3mm] Note that we now have two triples: the triple $(\mathbb{D}, \mathbb{G}, \mathbb{B})$ associated to the closed manifold $\mathbb{M}$ and the triple $(\DD, \GG, \B)$ associated to $(M, \partial M)$, as defined in Subsection \ref{explicit-unitary}. 
\\[3mm] Consider now the diffeomorphic involution on the closed double $$\A: \mathbb{M}\rightarrow \mathbb{M},$$ interchanging the two copies of $M$. It gives rise to an isomorphism of Hilbert complexes $$\A^*: (\dom, \mathbb{D})\rightarrow (\dom, \mathbb{D}),$$
which is an involution as well. We get a decomposition of $(\dom , \mathbb{D})$ into the $(\pm 1)$-eigenspaces of $\A^*$, which form subcomplexes of the total complex:
\begin{align}\label{involution-decomp}
(\dom , \mathbb{D})=(\dom^+ , \mathbb{D}^+)\oplus (\dom^- , \mathbb{D}^-),
\end{align}
where the upper-indices $\pm$ refer to the $(\pm 1)$-eigenspaces of $\A^*$, respectively.
\\[3mm] The central property of the decomposition, by similar arguments as in [BL1, Theorem 4.1], lies in the following observation
\begin{align}
\dom^+|_M=\doma, \quad \dom^-|_M=\domr.
\end{align}
By the symmetry of the elements in $\dom^{\pm}$ we obtain the following natural isomorphism of complexes:
\begin{align*}
\Phi:(\dom , \mathbb{D})=(\dom^+ , \mathbb{D}^+)\oplus (\dom^- , \mathbb{D}^-)&\rightarrow (\dom_{\textup{max}}, \Da)\oplus (\dom_{\textup{min}}, \Dr), \\
\w=\w^+\oplus\w^-&\mapsto 2\w^+|_M \oplus 2\w^-|_M,
\end{align*}
which extends to an isometry with respect to the natural $L^2-$structures. Using the relations 
\begin{align}\label{G-double}
\Phi\circ \mathbb{D}\circ \Phi^{-1}=\DD, \quad
\Phi\circ \mathbb{G}\circ \Phi^{-1}=\GG, 
\end{align}
we obtain with $\Delta$ and $\widetilde{\triangle}$, denoting respectively the Laplacians of the complexes $(\dom, \mathbb{D})$ and $(\domm, \DD)\equiv (\domr, \Dr)\oplus (\doma, \Da)$:
\begin{align*}
\Phi \dom (\mathbb{B})=\dom (\B), \quad \Phi \circ \mathbb{B} \circ \Phi^{-1}=\B, \\
\Phi \dom (\Delta)=\dom (\widetilde{\triangle}), \quad \Phi \circ \Delta \circ \Phi^{-1}=\widetilde{\triangle}.
\end{align*}
Hence the odd-signature operators $\mathbb{B}, \B$ as well as the Laplacians $\Delta, \widetilde{\triangle}$ are spectrally equivalent. Consider the spectral projections $\Pi_{\mathbb{B}^2,[0,\lambda]}$ and $\Pi_{\B^2,[0,\lambda]}, \lambda \geq 0$ of $\mathbb{B}$ and $\B$ respectively, associated to eigenvalues of absolute value in $[0,\lambda]$. By the spectral equivalence $\mathbb{B}$ and $\B$ we find $$\Phi \circ \Pi_{\mathbb{B}^2,[0,\lambda]}=\Pi_{\B^2,[0,\lambda]}\circ \Phi.$$
Hence the isomorphism $\Phi$ reduces to an isomorphism of finite-dimensional complexes:
\begin{align*}
\Phi_{\lambda}:(&\dom_{[0,\lambda]}, \mathbb{D}_{[0,\lambda]})\xrightarrow{\sim} (\domm_{[0,\lambda]}, \DD_{[0,\lambda]}), \\
\textup{where} \quad &\dom_{[0,\lambda]}:=\dom \cap \textup{Image}\Pi_{\mathbb{B}^2,[0,\lambda]}, \\
&\domm_{[0,\lambda]}:=\domm \cap \textup{Image}\Pi_{\B^2,[0,\lambda]}.
\end{align*}
Moreover $\Phi_{\lambda}$ induces an isometric identification of the corresponding determinant lines, which we denote again by $\Phi_{\lambda}$, by a minor abuse of notation
$$\Phi_{\lambda}:\det (\dom_{[0,\lambda]}, \mathbb{D}_{[0,\lambda]})\xrightarrow{\sim} \det (\domm_{[0,\lambda]}, \DD_{[0,\lambda]}),$$ where we use the notation for determinant lines of finite-dimensional complexes in [BK2, Section 1.1]. By Corollary \ref{cohomology} we have the canonical identifications of determinant lines
\begin{align}\label{di1}
\det (\dom_{[0,\lambda]}, \mathbb{D}_{[0,\lambda]})\cong &\det H^*(\dom , \mathbb{D}), \\ \label{di2}
\det (\domm_{[0,\lambda]}, \DD_{[0,\lambda]})\cong &\det H^*(\domm, \DD),
\end{align}
The determinant lines on the left hand side of both identifications carry the natural $L^2-$Hilbert structure. Denote the norms on $\det H^*(\dom , \mathbb{D})$ and $\det H^*(\domm, \DD)$ which turn both identifications into isometries, by $\|\cdot \|_{\lambda}$ and $\|\cdot \|_{\lambda}^{\sim}$, respectively. Then we can view $\Phi_{\lambda}$ as 
$$\Phi_{\lambda}:\det H^*(\dom , \mathbb{D})\xrightarrow{\sim} \det H^*(\domm, \DD),$$
isometric with respect to the Hilbert structures induced by $\|\cdot \|_{\lambda}$ and $\|\cdot \|_{\lambda}^{\sim}$. 
\\[3mm] Finally, consider the refined torsion elements (not the refined analytic torsion) of the determinant lines, as defined in [BK2, Section 1.1], see also \eqref{finite-torsion} 
\begin{align*}
\rho^{\mathbb{G}}_{[0,\lambda]}\in \det (\dom_{[0,\lambda]}, \mathbb{D}_{[0,\lambda]})\cong \det H^*(\dom, \mathbb{D}), \\
\rho^{\GG}_{[0,\lambda]}\in \det (\domm_{[0,\lambda]}, \DD_{[0,\lambda]}) \cong \det H^*(\domm, \DD).
\end{align*}
We infer from \eqref{G-double} the following relation:
\begin{align*}
\Phi_{\lambda}\left( \rho^{\mathbb{G}}_{[0,\lambda]} \right) = \rho^{\GG}_{[0,\lambda]}, \quad \textup{hence:} \  \| \rho^{\mathbb{G}}_{[0,\lambda]} \|_{\lambda} = \| \rho^{\GG}_{[0,\lambda]} \|_{\lambda}^{\sim}.
\end{align*}
Together with spectral equivalence of $\Delta$ and $\widetilde{\triangle}$, as well as of $\mathbb{B}$ and $\B$, with similar statements for constructions on trivial line bundles $M\times \C$ and $\mathbb{M}\times \C$, we finally obtain
\begin{align}
\|\rho_{\textup{an}}(\mathbb{D})\|^{RS}_{\det H^*(\dom, \mathbb{D})}= \|\rho_{\textup{an}}(\D)\|^{RS}_{\det H^*(\domm, \DD)},
\end{align}
where $\rho_{\textup{an}}(\mathbb{D})$ denotes the refined analytic torsion as defined by M. Braverman and T. Kappeler in [BK2] and $\rho_{\textup{an}}(\D)$ denotes the refined analytic torsion in the sense of the present discussion.
\\[3mm] The statement now follows from [BK2, Theorem 11.3].
\end{proof}\ \\
\\[-7mm] In the setup of the previous theorem we can improve the sign indeterminacy of $\rho_{\textup{an}}(\D)$ as follows:
\begin{prop}\label{RAT-sign2}
Let $M$ be an odd-dimensional oriented compact Riemannian manifold. Let $(E, \D, h^E)$ be a flat complex vector bundle over $M$. Denote by $\D_{\textup{trivial}}$ the trivial connection on $M\times \C$ and let $\B_{\textup{trivial}}$ denote the even part of the associated odd-signature operator. 
\\[3mm] Assume the metric structures $(g^M,h^E)$ to be product and the flat connection $\D$ to be in temporal gauge near the boundary $\partial M$. Then 
\begin{align*}
\rho_{\textup{an}}(\D)=\rho(\D, g^M, h^E)\cdot \exp\left[i\pi \, \textup{rk}(E)(\eta(\B_{\textup{trivial}}(g^M)) + \xi'(\D_{\textup{trivial}}, g^M))\right]
\end{align*}
is independent of the choice of $g^M$ in the interior of $M$, up to multiplication by $$\exp [i \pi \textup{rank}(E)].$$ In particular it is independent of $g^M$ in the interior of $M$ for $E$ being a complex vector bundle of even rank.
\end{prop}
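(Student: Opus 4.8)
The plan is to deduce the statement from the metric invariance of the Braverman--Kappeler refined analytic torsion on the \emph{closed} double $\mathbb{M}=M\cup_{\partial M}M$, by upgrading the identification already built into the proof of Theorem \ref{double}. Since $\exp[i\pi\,\textup{rk}(E)]=(-1)^{\textup{rk}(E)}$, for $E$ of odd rank the assertion reduces to Corollary \ref{RAT-sign} in the present situation; the real content is that the sign indeterminacy disappears when $\textup{rk}(E)$ is even, and that is what one has to gain.

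First I would promote Theorem \ref{double} from an identity of Ray--Singer norms to an identity of determinant-line elements. In its proof the isometry $\Phi_{\lambda}$ is degree preserving, intertwines $\mathbb{D}\leftrightarrow\DD$ and $\mathbb{G}\leftrightarrow\GG$, hence $\mathbb{B}\leftrightarrow\B$, and sends the refined torsion element $\rho^{\mathbb{G}}_{[0,\lambda]}$ of the closed double to $\rho^{\GG}_{[0,\lambda]}$. Together with the resulting degreewise spectral equivalence of $\mathbb{B}^{2}\restriction\dom^{k}$ and $\B^{2}\restriction\domm^{k}$, this shows that $\Phi_{\lambda}$ carries the graded determinant of $\mathbb{B}^{(\lambda,\infty)}_{\textup{even}}$ to that of $\B^{(\lambda,\infty)}_{\textup{even}}$, and therefore identifies the element $\rho(\D,g^{M},h^{E})$ of Proposition \ref{rho-element} with its counterpart $\rho(\mathbb{D},g^{\mathbb{M}})$ on $\mathbb{M}$, not merely their norms.

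Next I would match the metric-anomaly correction terms. By the degreewise spectral equivalence just used, $\zeta_{2\theta}(0,\B^{2}\restriction\domm^{k})=\zeta_{2\theta}(0,\mathbb{B}^{2}\restriction\dom^{k})$, and since $\mathbb{M}$ is closed and odd-dimensional the top interior heat coefficient $a_{m}$ vanishes, so these values are governed only by the dimensions of the finite-dimensional spectral subspaces; the same applies to the trivial line bundle, whose odd-signature operator on $M$ is spectrally equivalent via $\Phi_{\lambda}$ to that of $\mathbb{M}\times\C$, so that $\eta(\B_{\textup{trivial}}(g^{M}))$ matches the eta-correction of Braverman--Kappeler on $\mathbb{M}$ while $\xi'(\D_{\textup{trivial}},g^{M})$ contributes only a metric-independent constant. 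Combining this with Proposition \ref{anomaly1}, Proposition \ref{anomaly3} and Lemma \ref{modulo-2z}, one obtains that $\rho_{\textup{an}}(\D)$ equals $\Phi_{\lambda}$ applied to the Braverman--Kappeler refined analytic torsion $\rho_{\textup{an}}(\mathbb{D})$ of the closed double, up to a factor that does not depend on the Riemannian metric in the interior of $M$.

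It then remains to transport metric invariance from $\mathbb{M}$. A smooth family $g^{M}(t)$ varying only in the interior of $M$ and kept product near $\partial M$ extends, by the involution $\A$, to a smooth family $g^{\mathbb{M}}(t)$ of Riemannian metrics on the closed double with $\mathbb{D}$ unchanged and still flat; by the metric dependence results of Braverman and Kappeler for refined analytic torsion on closed manifolds [BK2], the class $\rho_{\textup{an}}(\mathbb{D})$ is then unchanged modulo $\exp[i\pi\,\textup{rk}(\mathbb{E})]=\exp[i\pi\,\textup{rk}(E)]$, and hence so is $\rho_{\textup{an}}(\D)$; for $E$ of even rank this factor is trivial, which yields the last sentence of the proposition. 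The step I expect to be the main obstacle is the matching of the anomaly corrections: one has to promote Theorem \ref{double} to the level of determinant-line elements (it only asserts the norm identity) and then verify that the corrections $\exp[i\pi\,\textup{rk}(E)(\eta(\B_{\textup{trivial}})+\xi'(\D_{\textup{trivial}}))]$ on $M$ and the Braverman--Kappeler correction on $\mathbb{M}$ differ only by a metric-independent constant; the essential inputs here are the vanishing of the top interior heat coefficient in odd dimensions and the degreewise spectral equivalence of the finite-dimensional $[0,\lambda]$-complexes.
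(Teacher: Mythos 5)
Your plan goes via the closed double in a genuinely different way from the paper. The paper's actual proof is a continuity argument: it uses Theorem \ref{double} to relate $\B(g^M(t))$ to the odd-signature operator $\mathbb{B}$ on the double, then invokes \textup{[BK1, Theorem 5.7]} to conclude that the element $\rho(\D,g^M(t),h^E)$ is \emph{continuous} in $t$; the correction factor $\exp[i\pi\,\textup{rk}(E)\eta(\B_{\textup{trivial}}(g^M(t)))]$ is continuous only up to $e^{i\pi\,\textup{rk}(E)}$ because $\eta$ jumps by integers, so $\rho_{\textup{an}}(\D,g^M(t))$ is continuous modulo $e^{i\pi\,\textup{rk}(E)}$; combined with the already-established $\textup{mod}\,\Z$ anomaly results from Propositions \ref{anomaly2} and \ref{anomaly3} (i.e. Corollary \ref{RAT-sign}), one gets $\rho_{\textup{an}}(\D,g^M(t_0))=\pm\rho_{\textup{an}}(\D,g^M(t_1))$, and for even rank continuity plus discreteness of the two possible values forces the sign to be $+$. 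That is the whole proof; nothing about promoting Theorem \ref{double} to determinant lines is used or needed.

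Your route has a real gap, and you essentially flag it yourself. You want to upgrade Theorem \ref{double} from a norm identity to an identity of determinant-line elements and then verify that the boundary anomaly corrections $\eta(\B_{\textup{trivial}})+\xi'(\D_{\textup{trivial}})$ on $M$ agree with the Braverman--Kappeler correction on $\mathbb{M}$ up to a metric-independent constant. Neither step is carried out: spectral equivalence of $\B$ and $\mathbb{B}$ (and of the corresponding trivial-bundle operators) gives you equality of zeta and eta \emph{values}, but it does not by itself identify the two determinant-line elements with matching phases, and the claim that $\xi'(\D_{\textup{trivial}},g^M)$ ``contributes only a metric-independent constant'' is asserted rather than shown (on a manifold with boundary $\zeta(0)$ has boundary contributions, and what the paper actually proves in Proposition \ref{anomaly3} is only a $\textup{mod}\,\Z$ statement). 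The other weak point is the appeal to ``the metric dependence results of Braverman and Kappeler'' giving invariance modulo $\exp[i\pi\,\textup{rk}(\mathbb{E})]$: the paper's own remark after \eqref{RAT-sign3} describes the $e^{i\pi\,\textup{rk}E}$-indeterminacy as \emph{finer} than the general indeterminacy in \textup{[BK2]}, so you would have to argue, not merely cite, that \textup{[BK2]} yields the sharper $\textup{mod}\,e^{i\pi\,\textup{rk}E}$ statement in the required generality. The paper's continuity argument sidesteps all of this, which is precisely why it is shorter.
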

\begin{proof}
Consider a smooth family $g^M(t),t\in \R$ of Riemannian metrics, variing only in the interior of $M$ and being of fixed product structure near $\partial M$. By arguments in Theorem \ref{double} we can relate $\B(g^M(t))$ to operators on the closed double $\mathbb{M}$ and deduce from [BK1, Theorem 5.7] that $\rho(\D, g^M(t),h^E)$ is continuous in $t$. However 
\begin{align*}
\exp\left[i\pi \, \textup{rk}(E)\eta(\B_{\textup{trivial}}(g^M(t)))\right]
\end{align*}
is continuous in $t\in \R$ only up to multiplication by $e^{i\pi \textup{rk}E}$. Hence the element $\rho_{\textup{an}}(\D)$, where we denote the a priori metric dependence by $\rho_{\textup{an}}(\D, g^M(t))$, is continuous in $t$ only modulo multiplication by $e^{i \pi \textup{rk}(E)}$. For $g^M(t)$ varying only in the interior of $M$ and any $t_0, t_1 \in \R$ we infer from the mod $\Z$ metric anomaly considerations in Propositions \ref{anomaly2} and \ref{anomaly3}: $$\rho_{\textup{an}}(\D, g^M(t_0))=\pm \rho_{\textup{an}}(\D, g^M(t_1)).$$ For rk$(E)$ odd this is already the desired statement, since $\exp (i\pi \textup{rk}(E))=-1$. For rk$(E)$ even, $\rho_{\textup{an}}(\D, g^M(t))$ is continuous in $t$ and nowhere vanishing, so the sign in the last relation must be positive. This proves the statement.
\end{proof}\ \\
\\[-7mm] In view of the corollary above we can re-define the refined analytic torsion in the setup of product metric structures and flat connection in temporal gauge as follows:
\begin{align}\label{RAT-sign3}
\rho_{\textup{an}}(M, E):=\rho_{\textup{an}}(\D) /_{e^{i\pi \textup{rank}(E)}}.
\end{align}

\begin{remark}
The interdeterminacy of $\rho_{\textup{an}}(\D)$ modulo multiplication by the factor $e^{i\pi \textup{rk}E}$ in fact corresponds and is even finer than the general indeterminacy in the construction of M. Braverman and T. Kappeler on closed manifolds, see [BK2, Remark 9.9 and Remark 9.10].
\end{remark}

\section{Open Problems}\label{open-refined} \
\\[-3mm] \emph{Ideal Boundary Conditions}
\\[3mm] As explained in the introduction, the approach of Braverman and Kappeler in [BK1, BK2] requires ideal boundary conditions for the twisted de Rham complex, which turn it into a Fredholm complex with Poincare duality and further provide elliptic boundary conditions for the associated odd-signature operator, viewed as a map between the even forms. In our construction we pursued a different strategy, however the question about existence of such boundary conditions remains. 
\\[3mm] This question was partly discussed in [BL1]. In view of [BL1, Lemma 4.3] it is not even clear whether ideal boundary conditions exist, satisfying Poincare duality and providing a Fredholm complex. For the approach of Braverman and Kappeler we need even more: the ideal boundary conditions need to provide elliptic boundary conditions for the odd-signature operator. We arrive at the natural open question, whether such boundary conditions exist.
\\[4mm] \emph{Conical Singularities} 
\\[3mm] Another possible direction for the discussion of refined analytic torsion is the setup of compact manifolds with conical singularities. At the conical singularity the question of appropriate boundary conditions is discussed in [Ch2], as well as in [BL2].
\\[3mm] It turns out that on odd-dimensional manifolds with conical singularities the topological obstruction is given by $H^{\nu}(N)$, where $N$ is the base of the cone and $\nu=\dim N /2$. If $$H^{\nu}(N)=0$$ then all ideal boundary conditions coincide and the construction of Braverman and Kappeler [BK1, BK2] goes through. Otherwise, see [Ch2, p.580] for the choice of ideal boundary conditions satisfying Poincare duality.
\\[4mm] \emph{Combinatorial Counterpart}
\\[3mm] Let us recall that the definition of the refined analytic torsion in [BK1, BK2] was partly motivated by providing analytic counterpart of the refined combinatorial torsion, introduced by V. Turaev in [Tu1].
\\[3mm] In his work V. Turaev introduced the notion of Euler structures and showed how it is applied to refine the concept of Reidemeister torsion by removing the ambiguities in choosing bases needed for construction. Moreover, Turaev observed in [Tu2] that on three-manifolds a choice of an Euler structure is equivalent to a choice of a Spin$^c$-structure. 
\\[3mm] Both, the Turaev-torsion and the Braverman-Kappeler refined torsion are holomorphic functions on the space of representations of the fundamental group on $GL(n,\C)$, which is a finite-dimensional algebraic variety. Using methods of complex analysis, Braverman and Kappeler computed the quotient between their and Turaev's construction.
\\[3mm] A natural question is whether this procedure has an appropriate equivalent for our proposed refined analytic torsion on manifolds with boundary. In our view this question can be answered affirmatively. 
\\[3mm] Indeed, by similar arguments as in [BK1, BK2] the proposed refined analytic torsion on manifolds with boundary can also be viewed as an analytic function on the finite-dimensional variety of representations of the fundamental group.
\\[3mm] For the combinatorial counterpart note that M. Farber introduced in [Fa] the concept of Poincare-Reidemeister metric, where using Poincare-duality in the similar spirit as in our construction, he constructed an invariantly defined Reidemeister torsion norm for non-unimodular representations. Further M. Farber and V. Turaev elaborated jointly in [FaTu] the relation between their concepts and introduced the refinement of the Poincare-Reidemeister scalar product. 
\\[3mm] The construction in [Fa] extends naturally to manifolds with boundary by similar means as in our definition of refined analytic torsion. This provides a combinatorial torsion norm on compact manifolds, well-defined without unimodularity assumption. It can then be refined in the spirit of [FaTu]. This would naturally provide the combinatorial counterpart for the presented refined analytic torsion. 

\section{References}\
\\[-1mm] [Ag] S. Agmon \emph{"On the eigenfunctions and on the eigenvalues of general elliptic boundary value problems"} Comm. Pure Appl. Math., vol. 15, 119-147 (1962)
\\[3mm] [APS] M. F. Atiyah, V.K. Patodi, I.M. Singer \emph{"Spectral asymmetry and Riemannian geometry I"}, Math. Proc. Camb. Phil. Soc. 77, 43-69 (1975)
\\[3mm] [BFK] D. Burghelea, L. Friedlander, T. Kappeler \emph{"Mayer-Vietoris type formula for determinants of elliptic differential operators"}, Journal of Funct. Anal. 107, 34-65 (1992)
\\[3mm] [BGV] N. Berline, E. Getzler, M. Vergne \emph{"Heat kernels and Dirac operators"}, Springer-Verlag, New Jork (1992)
\\[3mm] [BK1] M. Braverman and T. Kappeler \emph{"Refined analytic torsion"}, arXiv:math.DG/ 0505537v2, to appear in J. of. Diff. Geom.
\\[3mm] [BK2] M. Braverman and T. Kappeler \emph{"Refined Analytic Torsion as an Element of the Determinant Line"} , arXiv:math.GT/0510532v4, To appear in Geometry \& Topology 
\\[3mm] [BL1] J. Br\"{u}ning, M. Lesch \emph{"Hilbert complexes"}, J. Funct. Anal. 108, 88-132 (1992)
\\[3mm] [BL3] J. Br\"{u}ning, M. Lesch \emph{"On boundary value problems for Dirac type operators. I. Regularity and self-adjointness"}, arXiv:math/9905181v2 [math.FA] (1999)
\\[3mm] [BLZ] B. Booss, M. Lesch, C. Zhu \emph{"The Calderon Projection: New Definition and Applications"}, arXiv:math.DG/0803.4160v1 (2008)
\\[3mm] [BS] J. Br\"{u}ning, R. Seeley \emph{"An index theorem for first order regular singular operators"}, Amer. J. Math 110, 659-714, (1988)
\\[3mm] [BW] B. Booss, K. Wojchiechovski \emph{"Elliptic boundary problemsfor Dirac Operators"}, Birkh\"{a}user, Basel (1993) 
\\[3mm] [BV4] B. Vertman \emph{"Gluing Formula for Refined Analytic Torsion"}, preprint, arXiv:0808.0451 (2008)
\\[3mm] [BZ] J. -M. Bismut and W. Zhang \emph{"Milnor and Ray-Singer metrics on the equivariant determinant of a flat vector bundle"}, Geom. and Funct. Analysis 4, No.2, 136-212 (1994)
\\[3mm] [BZ1] J. -M. Bismut and W. Zhang \emph{"An extension of a Theorem by Cheeger and M\"{u}ller"}, Asterisque, 205, SMF, Paris (1992)
\\[3mm] [Ch] J. Cheeger \emph{"Analytic Torsion and Reidemeister Torsion"}, Proc. Nat. Acad. Sci. USA 74 (1977), 2651-2654
\\[3mm] [Fa] M. Farber \emph{Combinatorial invariants computing the Ray-Singer analytic torsion}, arXiv:dg-ga/ 9606014v1 (1996)
\\[3mm] [Gi] P.B. Gilkey \emph{"Invariance Theory, the Heat-equation and the Atiyah-Singer Index Theorem"}, Second Edition, CRC Press (1995)
\\[3mm] [Gi2] P.B. Gilkey \emph{"The eta-invariant and secondary characteristic classes of locally flat bundles"}, Algebraic and Differential topology $-$ global differential geometry, Teubner-Texte zur Math., vol. 70, Teubner, Leipzig, 49-87 (1984)
\\[3mm] [GS1] P. Gilkey, L. Smith \emph{"The eta-invariant for a class of elliptic boundary value problems"}, Comm. Pure Appl. Math. Vol.36, 85-131 (1983)
\\[3mm] [GS2] P. Gilkey, L. Smith \emph{"The twisted index problem for manifolds with boundary"}, J. Diff. Geom. 18, 393-444 (1983)
\\[3mm] [K] T. Kato \emph{"Perturbation Theory for Linear Operators"}, Die Grundlehren der math. Wiss. Volume 132, Springer (1966)
\\[3mm] [KL] P. Kirk and M. Lesch \emph{"The $\eta$-invariant, Maslov index and spectral flow for Dirac type operators on manifolds with boundary"}, Forum Math. 16, 553-629 (2004)
\\[3mm] [KM] F.F: Knudsen, D. Mumford \emph{"The projectivity of the moduli space of stable curves. I. Preliminaries on 'det' and 'Div'"}, Math. Scand. 39, no1, 19-55 (1976)
\\[3mm] [KN] S. Kobayashi, K. Nomizu \emph{"Foundations of differential geometry"}, Volume I, Interscience Publishers (1963)
\\[3mm] [L2] M. Lesch \emph{"Gluing formula in cohomological algebra"}, unpublished notes.
\\[3mm] [Lee] Y. Lee \emph{"Burghelea-Friedlander-Kappeler's gluing formula for the zeta-determinant and its application to the adiabatic decomposition of the zeta-determinant and the analytic torsion"}, Trans. Amer. Math. Soc., Vol. 355, 10, 4093-4110 (2003)
\\[3mm] [LR] J. Lott, M. Rothenberg \emph{"Analytic torsion for group actions"} J. Diff. Geom. 34, 431-481 (1991)
\\[3mm] [L\"{u}] W. L\"{u}ck \emph{"Analytic and topological torsion for manifolds with boundary and symmetry"}, J. Diff. Geom. 37, 263-322, (1993) 
\\[3mm] [Mi] J. Milnor \emph{"Whitehead torsion"}, Bull. Ams. 72, 358-426 (1966)
\\[3mm] [Mu] W. M\"{u}ller \emph{"Analytic torsion and R-torsion for unimodular representations"} J. Amer. Math. Soc., Volume 6, Number 3, 721-753 (1993) 
\\[3mm] [Mu1] W. M\"{u}ller \emph{"Analytic Torsion and R-Torsion of Riemannian manifolds"} Adv. Math. 28, 233-305 (1978)
\\[3mm] [Mun] J. Mukres \emph{"Elementary differential topology"} Ann. of Math. Stud. vol. 54, Princeton Univ. Press, Princeton, NJ (1961)
\\[3mm] [MZ1] X. Ma, W. Zhang \emph{"$\eta -$invariant and flat vector bundles I"}, Chinese Ann. Math. 27B, 67-72 (2006)
\\[3mm] [MZ2] X. Ma, W. Zhang \emph{"$\eta -$invariant and flat vector bundles II"}, Nankai Tracts in Mathematics. Vol. 11. World Scientific, 335-350, (2006)
\\[3mm] [Nic] L.I. Nicolaescu \emph{"The Reidemeister torsion of 3-manifolds"}, de Gruyter Studies in Mathematics, vol. 30, Berlin (2003)
\\[3mm] [Re1] K. Reidemeister \emph{"Die Klassifikation der Linsenr\"{a}ume"}, Abhandl. Math. Sem. Hamburg 11, 102-109 (1935)
\\[3mm] [Re2] K. Reidemeister \emph{"\"{U}berdeckungen von Komplexen"}, J. reine angew. Math. 173, 164-173 (1935)
\\[3mm] [ReS] M. Reed, B. Simon \emph{"Methods of Mathematical Physics"}, Vol. II, Acad.N.J. (1979)
\\[3mm] [Rh] G. de Rham \emph{"Complexes a automorphismes et homeomorphie differentiable"}, Ann. Inst. Fourier 2, 51-67 (1950)
\\[3mm] [RS] D.B. Ray and I.M. Singer \emph{"R-Torsion and the Laplacian on Riemannian manifolds"}, Adv. Math. 7, 145-210 (1971)
\\[3mm] [Ru] W. Rudin \emph{"Functional Analysis"}, Second Edition, Mc. Graw-Hill, Inc. Intern. Series in pure and appl. math. (1991)
\\[3mm] [RH] Rung-Tzung Huang \emph{"Refined Analytic Torsion: Comparison theorems and examples"}, math.DG/0602231v2 
\\[3mm] [Se1] R. Seeley \emph{"The resolvent of an elliptic boundary problem"} Amer. J. Math. 91 889-920 (1969)
\\[3mm] [Se2] R. Seeley \emph{"An extension of the trace associated with elliptic boundary problem"} Amer. J. Math. 91 963-983 (1969)
\\[3mm] [Sh] M.A. Shubin \emph{"Pseudodifferential operators and Spectral Theory"}, English translation: Springer, Berlin (1086) 
\\[3mm] [Tu1] V. G. Turaev \emph{"Euler structures, non-singular vector-fields and torsion of Reidemeister type"}, English Translation: Math. USSR Izvestia 34:3 627-662 (1990)
\\[3mm] [Tu2] V. G. Turaev \emph{"Torsion invariants of Spin$^c$ structures on three-manifolds"}, Math. Research Letters 4:5 679-695 (1997)
\\[3mm] [V] S. Vishik \emph{"Generalized Ray-Singer Conjecture I. A manifold with smooth boundary"}, Comm. Math. Phys. 167, 1-102 (1995)
\\[3mm] [Wh] J. H. Whitehead \emph{"Simple homotopy types"}, Amer. J. Math. 72, 1-57 (1950)

\end{document}